\tikzset{
  commutative diagrams/.cd, 
  arrow style=tikz, 
  diagrams={>=stealth}
}
\newcommand{\cref}[1]{\zcref[S]{#1}}
\newcommand{\Cref}[1]{\zcref[S]{#1}}
\numberwithin{equation}{section} 
\numberwithin{table}{section} 
\numberwithin{figure}{section}
\theoremstyle{definition}
\newtheorem{definition}[equation]{Definition}
\newtheorem{defn}[equation]{Definition}
\newtheorem{remark}[equation]{Remark}
\newtheorem{example}[equation]{Example}
\theoremstyle{plain}
\newtheorem{lemma}[equation]{Lemma}
\newtheorem{proposition}[equation]{Proposition}
\newtheorem{prop}[equation]{Proposition}
\newtheorem{theorem}[equation]{Theorem}
\newtheorem{corollary}[equation]{Corollary}
\newtheorem{cor}[equation]{Corollary}
\newtheorem{maintheorem}{Theorem} 
\newenvironment{pf}{\begin{proof}}{\end{proof}}
\newcommand{\iso}{\cong}
\newcommand{\into}{\hookrightarrow}
\newcommand{\onto}{\twoheadrightarrow}
\newcommand{\ul}[1]{\underline{#1}} 
\newcommand{\cA}{\mathcal{A}} 
\newcommand{\uA}{\underline{A}} 
\newcommand{\K}{\mathcal{K}} 
\newcommand{\bbF}{\mathbb{F}} 
\newcommand{\F}{\bbF} 
\newcommand{\bH}{\mathbf{H}} 
\newcommand{\uI}{\underline{I}} 
\newcommand{\cP}{\mathcal{P}} 
\newcommand{\bbR}{\mathbb{R}} 
\newcommand{\R}{\bbR} 
\newcommand{\uT}{\underline{T}} 
\newcommand{\bbZ}{\mathbb{Z}} 
\newcommand{\Z}{\bbZ} 
\newcommand{\ulZ}{\ul{\Z}}
\newcommand{\ulF}{\ul{\F_2}}
\newcommand{\ulf}{\ul{f}}
\newcommand{\ulg}{\ul{g}}
\newcommand{\upi}{\ul{\pi}}
\newcommand{\genUL}{b_L}
\newcommand{\genUR}{b_R}
\newcommand{\genVR}{c}
\newcommand{\kfont}{\mathscr} 
\newcommand{\CH}{\bH} 
\newcommand{\KH}{\boldsymbol{\kfont{H}}} 
\newcommand{\KHN}{\boldsymbol{\kfont{HN}}} 
\newcommand{\KHND}{\boldsymbol{\kfont{HN}}_{\!D}} 
\newcommand{\CN}{\mathbf{N}} 
\newcommand{\KN}{\boldsymbol{\kfont{N}}} 
\newcommand{\KND}{\boldsymbol{\kfont{N}}_{\!D}} 
\DeclareMathOperator{\res}{res} 
\DeclareMathOperator{\tr}{tr} 
\DeclareMathOperator{\nm}{nm} 
\DeclareMathOperator{\RO}{RO}
\DeclareMathOperator{\BP}{BP}
\DeclareMathOperator{\eH}{H}
\newcommand{\orho}{\overline{\rho}} 
\newcommand{\Ab}{{\mathscr{A}\textnormal{b}}} 
\newcommand{\Epi}{{\mathscr{E}\textnormal{pi}}} 
\newcommand{\Fin}{{\mathscr{F}\textnormal{in}}} 
\newcommand{\Mack}{{\mathscr{M}\textnormal{ack}}} 
\newcommand{\Set}{{\mathscr{S}\textnormal{et}}} 
\newcommand{\Tamb}{{\mathscr{T}\textnormal{amb}}} 
\newcommand{\Sp}{\mathscr{S}\textnormal{p}} 
\newcommand{\mymidsizematrix}[1]
{\scalebox{0.7}{$\begin{pmatrix} #1 \end{pmatrix}$}}
\newcommand{\inductioncolor}{orange}
\newcommand{\normcolor}{blue}
\definecolor{acolor}{RGB}{209,187,215}
\definecolor{Ltextcolor}{RGB}{25, 101, 176}
\definecolor{Dtextcolor}{RGB}{212, 205, 55}
\definecolor{Rtextcolor}{RGB}{220, 5, 12}
\newcommand{\ucolor}{gray}
\newcommand{\xL}{{\color{Ltextcolor}{\mathbf{x}_L}}}
\newcommand{\xD}{{\color{Dtextcolor}{\mathbf{y}_D}}}
\newcommand{\xR}{{\color{Rtextcolor}{\mathbf{z}_R}}}
\newcommand{\vL}{{\color{Ltextcolor}{\mathbf{v}_L}}}
\newcommand{\vR}{{\color{Rtextcolor}{\mathbf{w}_R}}}
\newcommand{\uu}{{\color{\ucolor}{\mathbf{u}}}}
\def\abox{
     \begin{tikzpicture}
        \node at (0,0) [rectangle,draw,fill={gray!50},inner sep=0.75pt] {$\ast$};
     \end{tikzpicture}
}
\def\apent{
    \begin{tikzpicture}
      \node[regular polygon, fill={gray!50}, draw, regular polygon sides=5, 
 minimum width=0pt, inner sep = 0.5ex,] at (0,0) {};
      \node at (0,0) {$\ast$};
    \end{tikzpicture}
}
\newcommand{\phiLDRf}{
\begin{tikzpicture}[scale=0.6]
\draw[fill=black] (0,0) rectangle (0.5,0.15);
\end{tikzpicture}
}
\newcommand{\fillsquare}{
\begin{tikzpicture}
\node[draw, fill=black, inner sep=1pt, regular polygon sides=4] at (0,0) {$\ast$};
\end{tikzpicture}
}
\newcommand{\fillsquaredual}{
\begin{tikzpicture}
\node[draw, fill=black, inner sep=1pt, regular polygon sides=4, text=white!100] at (0,0) {\scriptsize $\ast$};
\end{tikzpicture}
}
\newcommand{\nH}{
\begin{tikzpicture}[scale=0.075]
	\draw[fill] (0,0) arc (0:180:2 and 1) arc (180:0:2 and 2.25);
\end{tikzpicture}
}
\newcommand{\nHdual}{
\begin{tikzpicture}[scale=0.075]
	\draw[fill] (0,0) arc (0:180:2 and 1) arc (180:0:2 and 2.25);
	\node[text=white] at (-2,1.5) {$*$};
\end{tikzpicture}
}
\newcommand{\ssfrac}[2]{
	\mathchoice{\raisebox{5pt}{$#1$}\!\big/\!\raisebox{-5pt}{$#2$}}
		{{}^{#1}\!/_{\!#2}}{ #1 /  #2}{ #1 /  #2}
	} 
\newcommand{\sfrac}[2]{{}^{#1}\!/_{\!#2}} 
\newcommand{\bigsfrac}[2]{\displaystyle \ssfrac{#1}{#2}} 
\begin{document}

\title{The $RO(\K)$-graded homotopy of Klein-four normed Mackey functors}
\author{Bertrand Guillou}
\email{bertguillou@uky.edu}
\author{Jesse Keyes}
\email{jdke228@uky.edu}
\author{David Mehrle}
\email{davidm@uky.edu}
\address{Department of Mathematics, University of Kentucky, Lexington, KY, U.S.A.}


\begin{abstract}
We compute the $RO(\K)$-graded coefficients of the equivariant Eilenberg--Mac~Lane spectrum associated to various Hill--Hopkins--Ravenel norms of the constant-$\bbF_2$ Mackey functor, where $\K$ is the Klein-four group. Further, we analyze the multiplicative structure of these $\RO(\K)$-graded Tambara functors.
\end{abstract}

\maketitle


\setcounter{tocdepth}{1}
\tableofcontents

\section{Introduction}

In equivariant stable homotopy theory, ordinary cohomology is represented
by an equivariant Eilenberg--Mac~Lane $G$-spectrum $H\underline{M}$, where $\underline{M}$ is a Mackey functor for the group $G$. One may expect that the coefficients of an equivariant Eilenberg--Mac~Lane spectrum are easy to understand, but this is more complicated than in the non-equivariant setting. The homotopy of a $G$-spectrum $E$ can be considered as $\RO(G)$-graded, where $\RO(G)$ is the real representation ring of $G$. In the case that $E=H\underline{M}$ for some $G$-Mackey functor $\underline{M}$, the $\RO(G)$-graded coefficients of $E$ correspond to the Bredon homology of virtual real representation spheres of $G$ with coefficients in $\underline{M}$. 
	
For $G=C_2$, the $RO(G)$-graded coefficients of any equivariant Eilenberg--Mac~Lane spectrum are quite well-understood \cite{Sikora}. This is far from true for $G = \K = C_2 \times C_2$, the Klein-four group. One computational difficulty that arises in this context is that $\RO(\K)$ is a free abelian group of rank four. Despite this, some computations of the $\RO(\K)$-graded coefficients of $H\underline{M}$ have been done \cites{HoKr,GY,EB,Slone,JK}. In this paper, we make further contributions to these computations. We compute a portion of the $\RO(\K)$-graded homotopy Mackey functors of $\underline{M} = N_H^\K \underline{\F_2}$, where $H$ is a proper subgroup of $\K$ and $\underline{\F_2}$ the constant $H$-Mackey functor at $\F_2$. Here, $N_H^\K(-): \Mack_H \rightarrow \Mack_\K$ is the Mackey functor norm.
We also introduce a method for calculating the Mackey functors $N_H^G(\ulF)$
for any group $G$ and subgroup $H$, based on Tambara ideals of the Burnside Tambara functor.

Hill, Hopkins, and Ravenel introduced the norm functor $N_H^G(-): \Sp^H \rightarrow \Sp^G$ in \cite{Kervaire} and have studied various norms $N_{C_2}^G \BP_\R$, where $\BP_\R$ is the Real Brown-Peterson $C_2$-spectrum. 
Despite the  success of this analysis, many computations concerning equivariant spectra related to this norm construction remain mysterious. Along these lines, \cite{MSZ}*{Theorem~4.4} gives partial information about the homotopy of the geometric fixed points $\Phi^{C_2}N_{C_2}^{C_4} \BP_\R\simeq N_e^{C_2} H{\F_2}$. The first few 
{$\Z$-graded} 
homotopy Mackey functors are listed in \cref{tbl:HtpyNHF} 
{using notation as indicated in \cref{tab-C2Mackey},}
 and the $RO(C_2)$-graded homotopy groups $\pi_{x+y\sigma} N_e^{C_2} H\bbF_2$ are displayed for \mbox{$x\leq 6$} and $x+y\leq 6$
in \cref{fig:piC2NHF2}. 
See the beginning of \cref{sec:charts} for a discussion of \cref{fig:piC2NHF2}.
  A complete calculation of the homotopy of $N_{C_2}^{C_4}\BP_\R$ and $N_e^{C_2} H \F_2$ is out of reach. 
 {Part of the reason for this is that the underlying homotopy groups of $N_e^{C_2} H \F_2$, together with the $C_2$-action, form the dual Steenrod algebra with the action of the antipode. There is no known formula for the fixed points of this action. }
 In contrast, the $\RO(C_2)$-graded homotopy groups of both $H_{C_2} N_e^{C_2} \F_2$, the $0^{th}$-Postnikov truncation of $N_e^{C_2} H\F_2$, and $H_{C_2} \underline{\F_2}$ are completely understood \cites{Dugger,Sikora}.
The $RO(C_2)$-graded homotopy groups of the Eilenberg--Mac~Lane spectra $H_{C_2} N_e^{C_2} \bbF_2$ and $H_{C_2} \ul\bbF_2$ are displayed in \cref{fig:piC2NF2} and \cref{fig:piC2F2}, respectively.

Motivation for the study of $N_{C_2}^{C_4}\BP_\R$ comes from chromatic homotopy theory \cite{HSWX}.
The quaternion group $Q_8$ also plays an important role in chromatic homotopy theory, which suggests the study of $N_{C_2}^{Q_8} \BP_\R$, where $C_2$ is the center of $Q_8$.
This is expected to be difficult; however, given that the quotient $Q_8/C_2$ is isomorphic to $\K$, one may wish to compute $\Phi^{C_2}N_{C_2}^{Q_8} \BP_\mathbb{R} \simeq N_e^{\K} H\F_2$. Again, this is out of reach, though its Postnikov truncation $H_\K N_e^\K \F_2$ can be completely computed. The $RO(\K)$-graded homotopy groups of $H\underline{\F_2}$ were previously computed in \cites{HoKr,EB}, while some of the homotopy Mackey functors were determined in \cite{GY}. These are depicted in a range in \cref{fig:HF}. In this article, we compute a portion of the $\RO(\K)$-graded homotopy Mackey functors of $H_\K N_e^{\K} {\F_2}$. It is common to use the symbol $\bigstar$ to denote a grading over $RO(G)$. We will use the symbol $\blacklozenge$ to denote a grading over the $\mathrm{Aut}(\K)$-fixed subgroup $\Z\{1,\orho\}\subset RO(\K)$, where $\orho$ is the reduced regular representation.

\begin{maintheorem}
\label{thmA}
The homotopy Mackey functors $\upi_{\blacklozenge}H_\K N_e^\K \F_2$ are as described in \cref{sec:HNeF} and displayed in \cref{fig:HN,fig:HNmult}, where $\blacklozenge \in \Z\{1,\orho\}\subset RO(\K)$.

In particular, the quotient map $ N_e^\K \F_2 \rightarrow \ulF$ induces an isomorphism 
\[ \upi_{n+k\orho}(HN_e^\K \F_2) \rightarrow \upi_{n+k\orho}(H\ulF) \]
in a subrange of each of the positive and negative cones. These subranges are above the line through $(3,-1)$ with slope $-1$ and below the line through $(-2,1)$ with slope $-1$, respectively. 
\end{maintheorem}

We also consider the intermediate norm $N_C^\K \ulF$, where $C\leq K$ is an order 2 subgroup. 
The corresponding Eilenberg--Mac~Lane spectrum $H_\K N_C^\K \ulF$ is the Postnikov truncation of the normed spectrum $N_C^\K H_C \ulF$. The latter is a useful intermediary between the mysterious $N_e^\K H\F_2$ and
{the well-understood}
 $H_\K \ulF$, in that $H_\K\ulF = N_\K^\K H_\K\ulF$.
For definiteness, we specialize to the case that $C=D$ is the diagonal subgroup of $\K$, though the other choices can be obtained by using the $\mathrm{Aut}(\K)$-action. 

\begin{maintheorem}
\label{thmB}
The homotopy Mackey functors $\upi_{\blacklozenge}H_\K N_D^\K \ulF$ are as described in \cref{sec:HNDF} and displayed in \cref{fig:HND,fig:HNDmult}, where $\blacklozenge \in \Z\{1,\orho\}\subset RO(\K)$. 

As in \cref{thmA}, the quotient map $ N_D^\K \ulF \rightarrow \ulF$ induces an isomorphism 
\[ \upi_{n+k\orho}(HN_D^\K \ulF) \rightarrow \upi_{n+k\orho}(H\ulF) \]
in a subrange of each of the positive and negative cones. These subranges are above the line through $(3,-1)$ with slope $-1$ and below the line through $(-2,1)$ with slope $-1$, respectively.
\end{maintheorem}

\begin{table}
\caption{The homotopy Mackey functors $\underline\pi_n N_e^{C_2} H \bbF_2$, $n\leq 6$. See \cref{tab-C2Mackey} for the Mackey functor Lewis diagrams. }
\label{tbl:HtpyNHF}
$\begin{array}{c|ccccccccc}
n & 0 & 1 & 2 & 3 & 4 & 5 & 6 
\\
\hline
\ul\pi_n N_e^{C_2} H \bbF_2 
& N_e^{C_2} \bbF_2 
&   
\ulg \oplus \ulf
& 
N_e^{C_2} \bbF_2 
&  
\ulg\, \oplus
\uparrow_e^{C_2}\! \bbF_2
& 
\uparrow_e^{C_2}\! \bbF_2
& 
\uparrow_e^{C_2}\! \bbF_2
& 
N_e^{C_2} \bbF_2 \oplus  
\uparrow_e^{C_2}\! \bbF_2
&
\end{array}$
\end{table}

\subsection{Conventions}

We write $e$ for a trivial group and $C_2 := \langle \gamma \mid \gamma^2 = 1 \rangle$ for a finite group of order two. Our main group of interest is the Klein four-group $\K := C_2 \times C_2$; its nontrivial subgroups are $L := C_2 \times e$, $D := \langle (\gamma, \gamma) \rangle$, and $R := e \times C_2$. 

We write both $\Z/2$ and $\F_2$ for the ring of order 2, guided by aesthetics.

We use different fonts to differentiate between non-equivariant, $C_2$-equivariant, and $\K$-equivariant homotopy theory. We write $\eH$ for non-equivariant Eilenberg--Mac~Lane spectra, $\CH$ for $C_2$-equivariant Eilenberg--Mac~Lane spectra, and $\KH$ for $\K$-equivariant Eilenberg--Mac~Lane spectra. 
Similarly, we will often abbreviate the Mackey functors {$N_e^{C_2} \mathbb{F}_2, N_e^\K \bbF_2,$ and $ N_D^\K \ulF$ by 
$\CN, \KN,$ and $\KND$, respectively}.

Equivariant spectra will always be considered as indexed over a complete universe, so that their homotopy is valued in Mackey functors.
Our calculations require many $C_2$- or $\K$-Mackey functors; notation and definition for these can be found in \cref{tab-C2Mackey} and  \cref{tab-K4Mackey}.

\subsection{Acknowledgements}

We thank Mike Hill, Danny Shi, and Guoqi Yan for helpful discussions and Anna Marie Bohmann for guidance on visualization. We also thank an anonymous referee for their helpful suggestions.
This work was supported by NSF grants DMS-2403798 and DMS-2135884 and  Simons Foundation award MPS-TSM-00007067.

\subsection{Competing interests}

The authors declare none.

\section{$\K$-Norms via Tambara Ideals}
\label{sec:algebra}

The Hill--Hopkins--Ravenel norm $N_H^G \colon \Sp^H \to \Sp^G$ is captured on the level of Mackey functors by a functor $N_H^{G} \colon \Mack(H) \to \Mack(G)$. As a non-additive functor between additive categories, the norm is often difficult to compute. We introduce a technique for computing $N_H^G(\ulF)$ based on Tambara ideals, and use it to compute $N_e^\K(\F_2)$ and $N_C^\K(\ulF)$, where $C$ is any of the order two subgroups of $\K$.

\subsection{Norms of Mackey and Tambara Functors}

Recall that the category of $G$-Mackey functors is the category of additive functors $\cA_G \to \Ab$, where $\cA_G$ is the Burnside category for $G$.

\begin{definition}[{\cite{Hoyer}*{Definition 2.3.2}}]
	The functor $N_H^G \colon \Mack(H) \to \Mack(G)$ is given by left Kan extension along coinduction $\Fin_H(G,-) \colon \cA_H \to \cA_G$. 
\end{definition}

Tambara functors are the $G$-commutative monoids in the category of $G$-Mackey functors.
Alternatively, the category of $G$-Tambara functors is the category of product preserving functors $\uT \colon \cP^G \to \Set$ such that each $\uT(U)$ is a commutative ring, where $\cP^G$ is the category of polynomials of finite $G$-sets \cite{Tam1993}*{Section 8}. Concretely, a Tambara functor is a Mackey functor valued in commutative rings whose restrictions are ring homomorphisms satisfying Frobenius reciprocity (a Green functor) equipped with norm maps (of multiplicative monoids) satisfying Tambara reciprocity.

The norm $n_H^G$ from $H$-Tambara functors to $G$-Tambara functors is slightly different from the norm of Mackey functors. 

\begin{definition}[{\cite{BH2018}*{Definition 6.8, Proposition 6.9}}]
	The functor $n_H^G \colon \Tamb(H) \to \Tamb(G)$ is given by left Kan extension along the inclusion $\cP^H \to \cP^G$; it is left adjoint to the restriction functor $\res^G_H \colon \Tamb(G) \to \Tamb(H)$. 
\end{definition}

We recall a theorem relating the two functors: 

\begin{theorem}[{\cite{Hoyer}*{Theorem 2.3.3}}]
	\label{HoyerMazurTheorem}
	The following square commutes, where vertical arrows are forgetful functors: 
	\[
	\begin{tikzcd}
		\Tamb(H) \ar[r, "n_H^G"] \ar[d] & \Tamb(G) \ar[d] \\
		\Mack(H) \ar[r, "N_H^G"] & \Mack(G)
	\end{tikzcd}
	\]
\end{theorem}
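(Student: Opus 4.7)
The plan is to verify the commutativity by exhibiting both norms as left Kan extensions over the same commutative square of indexing categories. First I would set up the square
\[
\begin{tikzcd}
\cA_H \ar[r, "i"] \ar[d, "j_H"'] & \cA_G \ar[d, "j_G"] \\
\cP^H \ar[r, "I"] & \cP^G
\end{tikzcd}
\]
where $i = \Fin_H(G,-)$ is coinduction, $I$ is the analogous inclusion $\cP^H \to \cP^G$, and $j_H, j_G$ are the inclusions of the Burnside (span) categories into the polynomial (bispan) categories. Commutativity of this square is essentially definitional: coinducing a span from $\cA_H$ to $\cA_G$ produces the same bispan in $\cP^G$ as first viewing the span as a bispan with trivial exponential leg in $\cP^H$ and then applying $I$.

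Since the vertical forgetful functors $U$ are precomposition with $j_H$ and $j_G$, unpacking definitions gives
\[
U \circ n_H^G(\uT) = (\Lan_I \uT) \circ j_G
\quad \text{and} \quad
N_H^G \circ U(\uT) = \Lan_i(\uT \circ j_H).
\]
From the commutative square above, the universal property of the left Kan extension produces a canonical comparison map $\Lan_i(\uT \circ j_H) \to (\Lan_I \uT) \circ j_G$, and the theorem reduces to showing this map is an isomorphism of Mackey functors. This is a Beck--Chevalley-type condition for the square, which I would verify pointwise using the colimit formula for Kan extensions.

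The main obstacle will be checking the Beck--Chevalley condition: for each $X \in \cA_G$, the comparison of comma categories $(i/X) \to (I/j_G(X))$ must be cofinal so that the two colimits computing the Kan extensions at $X$ agree. Concretely, this amounts to showing that every bispan in $\cP^G$ with target $j_G(X)$ factors, up to the bispan equivalence relation, through an object in the image of $I$. This ultimately reduces to a combinatorial analysis of double cosets for $H \leq G$ together with the decomposition of bispans into their transfer and norm components, and it is the principal point where the (non-additive) polynomial structure must be reconciled with the (additive) span structure.
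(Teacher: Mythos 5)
The paper does not prove this statement; it is quoted verbatim from Hoyer's thesis. Your proposal, however, founders at its first step: the square of indexing categories you write down does not commute. The horizontal functor $\cA_H \to \cA_G$ defining the Mackey norm is coinduction, $X \mapsto \Fin_H(G,X)$, whereas the functor $I \colon \cP^H \to \cP^G$ defining the Tambara norm (the one whose precomposition gives $\res^G_H$ on Tambara functors) is built from \emph{induction}, $X \mapsto G\times_H X$ --- this is forced by the adjunction $n_H^G \dashv \res^G_H$, since $(\res^G_H\uT)(H/K) = \uT(G/K) = \uT(G\times_H (H/K))$. Already on objects the two composites $\cA_H \to \cP^G$ disagree: for $H = e \leq G = C_2$ and $X = \ast$, one composite gives $\ast$ and the other gives $C_2$. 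So the claim that commutativity of your square is ``essentially definitional'' is false, and the canonical Beck--Chevalley comparison map you propose to analyze does not exist in the form you describe.

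The actual content of Hoyer's theorem is precisely the reconciliation of these two different functors, and the bridge is the dependent-product (norm) structure of $\cP^G$: for an $H$-set $X$, the exponential of $G\times_H X \to G/H$ along $G/H \to G/G$ is exactly the coinduced set $\Fin_H(G,X)$. Using the decomposition of every morphism of $\cP^G$ as a composite $T_f\circ N_g\circ R_h$ together with the double coset formula, one shows that polynomials out of $G\times_H X$ correspond to spans out of $\Fin_H(G,X)$ in a way that identifies the relevant comma categories up to the required cofinality. That identification --- not a commuting square --- is the key lemma your proposal is missing; your final paragraph gestures at the right combinatorics but attaches it to a comparison map that was never constructed. (A secondary point: both Kan extensions are computed in categories of additive, respectively product-preserving, functors, so the pointwise colimit formula you invoke also needs justification in that setting.)
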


\subsection{Tambara Ideals and Norms}
Like commutative rings, Tambara functors have a robust theory of ideals. We use non-unital Tambara functors to define Tambara ideals. 

Let $\Epi^G \subseteq \Fin^G$ be the category of finite $G$-sets and surjections. Note that $\Epi^G$ is a pullback stable subcategory of finite $G$-sets, i.e. pullbacks in $\Fin^G$ of morphisms in $\Epi^G$ are again in $\Epi^G$. Let $\cP^G_\Epi$ be the category of polynomials with exponents in $\Epi^G$ \cite{BH2018}*{Definition 2.7}. 

\begin{definition}[{\cite{BH2018}*{Definition 4.15}}]
	A \emph{non-unital Tambara functor} is a product preserving functor $\uT \colon \cP^G_\Epi \to \Set$ such that each $\uT(X)$ is an abelian group. 
\end{definition}

Concretely, a non-unital Tambara functor is a Tambara functor valued in non-unital rings, that is, a Mackey functor valued in non-unital commutative rings whose restrictions are ring homomorphisms satisfying Frobenius reciprocity (a non-unital Green functor) equipped with norm maps (of non-unital multiplicative monoids) satisfying Tambara reciprocity. 

The definition below is equivalent to the original definition given by Nakaoka \cite{ideals}*{Definition 2.1}.

\begin{definition}[{\cite{Hill2017}*{sentence before definition 5.1}}]
	A \emph{Tambara ideal} $\uI$ of a Tambara functor $\uT$ is a sub-non-unital Tambara functor of $\uT$ with a morphism of non-unital Tambara functors $\uT \boxtimes \uI \to \uI$.
\end{definition}

Practically speaking, a Tambara ideal $\uI$ of a Tambara functor $\uT$ is a collection of ideals $\uI(G/H) \subseteq \uT(G/H)$ closed under restriction, transfer, and norm. 

\begin{example}
	Recall that the Burnside ring for $C_2$ is
\[
	A(C_2) \iso \bigsfrac{\Z[t]}{(t^2-2t),}
\]
where $t = [\sfrac{C_2}{e}]$.
Let $\uI$ be the Tambara ideal generated by $2 \in \uA(C_2/C_2)$ inside the $C_2$-Burnside Tambara functor $\uA$. 
	This is the smallest Tambara ideal of $\uA$ containing $2 \in \uA(C_2/C_2)$. This ideal must contain 
	\[
		2 = \res(2) \in \uA(C_2/e)
	\] at the underlying level and 
	\[
		2t = \tr\res(2),\quad 2 + t = \nm \res(2) \in \uA(C_2/C_2)
	\] 
	at the fixed level. Altogether, a minimal generating set for $\uI(C_2/C_2)$ is $(2,t)$ and a minimal generating set for $\uI(C_2/e)$ is $(2)$. The quotient Tambara functor $\uA/\uI$ is therefore isomorphic to $\underline{\F_2}$.
	\[
		\begin{tikzcd}[column sep=3cm]
			(2,t)
				\ar[d, "\res"] 
				&
			\Z[t] / (t^2 - 2t)
				\ar[d, "t \mapsto 2" description]
				&
			\F_2 
				\ar[d, "1" description]
				\\[1.5cm]
			(2) 
				\ar[u, bend right=50, orange, "\tr"']
				\ar[u, bend left=50, blue, "\nm"] 
				&
			\Z 
				\ar[u, bend right=50, orange, "a \mapsto at"']
				\ar[u, bend left=50, blue, "a \mapsto a + \frac{a^2 - a}{2} t"]
				&
			\F_2
				\ar[u, bend right=50, "2"', orange]
				\ar[u, bend left=50, blue, "a \mapsto a^2"]
				\\
			\uI 
				\ar[r, hook] 
				& 
			\uA
				\ar[r, two heads]
				&
			\underline{\F_2}
		\end{tikzcd}
	\]
\end{example}
		
	It is often the case that many Mackey functors of interest can be written as quotients of the Burnside Tambara functor. For instance, the previous example shows that $\underline{\F_2}$ is a quotient of $\uA$, and $\ulZ$ is a quotient of $\uA$ by the Tambara ideal generated by $t-2 \in \uA(C_2/C_2)$.
	Writing $\underline{\F_2}$ as the quotient of $\uA$ by this Tambara ideal is a productive strategy to compute its norms. 

\begin{proposition}
	\label{norms as quotients}
	Let $G$ be a group which contains a subgroup isomorphic to $C_2$.
	The norm $n_{C_2}^G(\ulF)$ is the quotient of the $G$-Burnside Tambara functor $\uA$ by the Tambara ideal generated by $2 \in \uA(G/{C_2})$. 
\end{proposition}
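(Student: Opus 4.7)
The plan is to identify both $n_H^G(\ulF)$ and $\uA/\uI$ (writing $\uI$ for the Tambara ideal generated by $2 \in \uA(G/H)$) by showing that they represent the same functor on $\Tamb(G)$, then conclude by Yoneda. The key tools are the adjunction $n_H^G \dashv \res^G_H$ and the initiality of the Burnside Tambara functor in each of $\Tamb(G)$ and $\Tamb(H)$.

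As a preliminary step, I would generalize the computation in the preceding example from $H = C_2$ to arbitrary finite groups: the $H$-Tambara functor $\ulF$ is the quotient of the $H$-Burnside Tambara functor $\uA_H$ by the Tambara ideal $\underline{J}$ generated by $2 \in \uA_H(H/H)$. Since $\uA_H$ is initial in $\Tamb(H)$, there is a unique morphism $\uA_H \to \ulF$, and it is componentwise surjective; the substantive content is the identification of its kernel with $\underline{J}$.

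Given this, for any $\uT \in \Tamb(G)$, morphisms $\uA/\uI \to \uT$ correspond via the universal property of the quotient by a Tambara ideal to the unique morphism $\uA \to \uT$ with the extra condition that $2 \in \uA(G/H)$ maps to $0$ in $\uT(G/H)$. Dually, morphisms $n_H^G(\ulF) \to \uT$ correspond by the adjunction $n_H^G \dashv \res^G_H$ to morphisms $\ulF \to \res^G_H \uT$, which by the preliminary step correspond to the unique morphism $\uA_H \to \res^G_H \uT$ together with the vanishing of $2 \in \uA_H(H/H)$ in $\res^G_H \uT(H/H) = \uT(G/H)$. Both conditions are the same, namely that $2 = 0$ in $\uT(G/H)$. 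Yoneda then yields $n_H^G(\ulF) \cong \uA/\uI$.

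The main obstacle is the preliminary step: showing that the Tambara ideal generated by $2 \in \uA_H(H/H)$ is really the entire kernel of $\uA_H \onto \ulF$. The example settles this for $H = C_2$ through an explicit enumeration of restrictions, transfers, and norms of $2$; for general $H$ one must verify that the closure of $2$ under those operations exhausts the kernel at each orbit level $H/K$. This is a finite calculation using the description of $\uA_H(H/K)$ as a free abelian group on transitive $H$-sets over $H/K$, but it is the only step of the argument that is not purely formal.
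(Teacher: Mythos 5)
Your argument is correct, and it takes a genuinely different route from the paper's. The paper presents $\ulF$ as a reflexive coequalizer $\uA[x_{H/H}] \rightrightarrows \uA \onto \ulF$ (the two maps sending $x$ to $2$ and to $0$), uses that $n_H^G$ preserves reflexive coequalizers because it is a left adjoint, and invokes the identification $n_H^G(\uA[x_{H/H}]) \cong \uA[x_{G/H}]$ from \cite{HMQ} to recognize the resulting coequalizer in $\Tamb(G)$ as the quotient of $\uA$ by the ideal generated by $2 \in \uA(G/H)$. You instead corepresent both sides: using initiality of the Burnside Tambara functor and the fact that a morphism out of $\uA$ kills a Tambara ideal if and only if it kills its generator (kernels of Tambara morphisms are Tambara ideals), each of $\operatorname{Hom}(\uA/\uI, \uT)$ and $\operatorname{Hom}(n_H^G\ulF,\uT) \cong \operatorname{Hom}(\ulF, \res^G_H\uT)$ is a singleton precisely when $2=0$ in $\uT(G/H)$ and is empty otherwise, so Yoneda finishes the proof. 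Your version is more economical in external inputs --- no computation of norms of free Tambara functors, no appeal to preservation of reflexive coequalizers --- while the paper's proof has the advantage of producing an explicit coequalizer presentation of $n_H^G(\ulF)$ that feeds directly into the explicit generators-and-relations computations that follow. The ``main obstacle'' you flag, namely that the Tambara ideal generated by $2 \in \uA(H/H)$ is the entire kernel of $\uA \onto \ulF$, is not an extra burden of your approach: the paper relies on exactly the same fact (it is asserted in the paragraph preceding the proposition, and the coequalizer presentation of $\ulF$ in the paper's proof is a restatement of it), so the two arguments stand on equal footing there.
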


\begin{proof}
 	The ${C_2}$-Tambara ideal $\uI \subseteq \uA$ generated by $2 \in \uA({C_2}/{C_2})$ is the image of the ${C_2}$-Tambara functor homomorphism $\uA[x_{{C_2}/{C_2}}] \to \uA$ determined by $x \mapsto 2$, where $\uA[x_{{C_2}/{C_2}}]$ is the free $H$-Tambara functor generated at the top level \cite[Definition 5.4]{BH2018}. 
	
	We may therefore write the ${C_2}$-Tambara functor $\underline{\F_2}$ as a reflexive coequalizer in the category $\Tamb({C_2})$:
	\[
		\begin{tikzcd}
			\uA[x_{{C_2}/{C_2}}] 		
				\ar[r, shift left=2mm, "x \mapsto 2"]
				\ar[r, shift right=2mm, "x \mapsto 0"'] 
				&
			\uA 
				\ar[l, hook']
				\ar[r, two heads]
				&
			\underline{\F_2}
		\end{tikzcd}
	\]
	Since $n_{{C_2}}^G \colon \Tamb({C_2}) \to \Tamb(G)$ is a left adjoint, it preserves coequalizers. 
	The Tambara norm $n_{{C_2}}^G$ sends the ${C_2}$-Burnside functor to the $G$-Burnside functor. By \cite[Proposition 4.2]{HMQ}, we know that $n_{{C_2}}^G(\uA[x_{{C_2}/{C_2}}])$ is isomorphic to $\uA[x_{G/{C_2}}]$. Thus, we have a reflexive coequalizer in $\Tamb(G)$:
	\[
		\begin{tikzcd}
			\uA[x_{G/{C_2}}] 		
				\ar[r, shift left=2mm, "x \mapsto 2"]
				\ar[r, shift right=2mm, "x \mapsto 0"'] 
				&
			\uA 
				\ar[l, hook']
				\ar[r, two heads]
				&
			n_{{C_2}}^G(\underline{\F_2})
		\end{tikzcd}
	\]
	This expresses $n_{{C_2}}^G(\underline{\F_2})$ as the quotient of the $G$-Tambara functor $\uA$ by the ideal generated by $2 \in \uA(G/{C_2})$.
\end{proof}

{Although the proposition above is stated for the norm from $C_2$ to $G$, a similar strategy works to calculate other norms, as the following example shows. }

\begin{example}
	\label{norm from e to C2 of F2}
	$n_e^{C_2}(\F_2) \cong \uA/\underline{J}$, where $\underline{J}$ is the Tambara ideal of $\uA$ generated by $2 \in \uA(C_2/e)$. At the fixed level, minimal generators for this ideal are $(2t, 2+t)$. 
	\[
		\begin{tikzcd}[column sep=3cm]
			(2t,2+t)
				\ar[d, "\res"] 
				&
			\bigsfrac{\Z[t]}{(t^2 - 2t)}
				\ar[d, "t \mapsto 2" description]
				&
			\Z/4
				\ar[d, "1" description]
				\\[1.5cm]
			(2) 
				\ar[u, bend right=50, \inductioncolor, "\tr"']
				\ar[u, bend left=50, \normcolor, "\nm"] 
				&
			\Z 
				\ar[u, bend right=50, \inductioncolor, "a \mapsto at"']
				\ar[u, bend left=50, \normcolor, "a \mapsto a + \frac{a^2 - a}{2} t"]
				&
			\Z/2
				\ar[u, bend right=50, "2"', \inductioncolor]
				\ar[u, bend left=50, \normcolor, "a \mapsto a^2"]
				\\
			\underline{J}
				\ar[r, hook] 
				& 
			\uA
				\ar[r, two heads]
				&
			n_e^{C_2}(\F_2)
		\end{tikzcd}
	\]
\end{example}

\begin{remark}
	If $\uI$ is a Tambara ideal of a Tambara functor $\uT$, we might expect a statement like the following to be true: $N_H^G(\uI)$ is a Tambara ideal of $n_H^G(\uT)$, and the norm of the quotient $n_H^G(\uT/\uI)$ is the quotient of the norm $n_H^G(\uT) / N_H^G(\uI)$. There are several obstacles to making this statement precise. Although $N_H^G \colon \Mack(H) \to \Mack(G)$ is a left adjoint, it is not an exact functor (nor even additive). Furthermore, $\uT/\uI$ is not a colimit in $H$-Tambara functors, and there's no reason it should be preserved by $n_H^G$.
\end{remark}

\subsection{The Norm \texorpdfstring{$N_e^\K(\F_2)$}{from e to K of F_2}}
\label{sec:NeKF}

Recall that the Burnside ring for $\K$ is
\[
	A(\K) \iso \bigsfrac{\Z[t_L,t_D,t_R]}{t_Lt_D = t_Lt_R = t_Dt_R, t_\bullet^2 = 2t_\bullet,}
\]
where $t_L = [\sfrac{\K}{L}]$, $t_D = [\sfrac{\K}{D}]$, and $t_R = [\sfrac{\K}{R}]$. The relation $t_\bullet^2 = 2t_\bullet$ holds for all $\bullet \in \{L,D,R\}$. Note that the class $[\sfrac{\K}{e}]$ is unnecessary as a generator, since $[\sfrac{\K}{e}] = t_Lt_D = t_Lt_R = t_Dt_R$.
The Burnside Tambara functor $\uA$ for the Klein four-group is displayed in \cref{figure:BurnsideTambaraFunctor}.
In the Tambara functor $\uA$, the norm 
$\nm_e^L$ is given by 
\[
	\nm_e^L(a) = a + \left(\tfrac{a^2 - a}{2}\right) s_L,
\]
and the norm $\nm_L^\K$ is given by 
\[
\nm_L^\K(a\,s_L + b)\ =\ \left(a^2-a+ba \right)t_Dt_R \, + \, \left(\tfrac{b^2-b}{2}\right)t_L \, + \, a\,t_D \, + \, a\, t_R + b,
\]
and similarly for norms to or from $D$ and $R$.

\begin{figure}[h]
\caption{The Burnside $\K$-Tambara functor $\uA$.}
\label{figure:BurnsideTambaraFunctor}
\begin{center}
	\begin{tikzpicture}[scale=0.9]
		\node (Ae) at (0,-4) {$\Z$};
		\node (AL) at (-4,0) {$\bigsfrac{\Z[s_L]}{s_L^2 -2s_L}$};
		\node (AD) at (0,0) {$\bigsfrac{\Z[s_D]}{s_D^2 - 2s_D}$};
		\node (AR) at (4,0) {$\bigsfrac{\Z[s_R]}{s_R^2 - 2s_R}$};
		\node (AK) at (0,4) {$\bigsfrac{\Z[t_L,t_D,t_R]}{\big(t_Lt_D = t_Lt_R = t_Dt_R, t_\bullet^2 = 2t_\bullet\big)}$};
					
		\draw[-Stealth,\inductioncolor] (Ae) to[bend right=20] node[fill=white, near end]{\tiny $\cdot s_D$} (AL);
		\draw[-Stealth,\inductioncolor] (Ae) to[bend right=25] node[fill=white, near end]{\tiny $\cdot s_D$} (AD);
		\draw[-Stealth,\inductioncolor] (Ae) to[bend right=20] node[fill=white, near end]{\tiny $\cdot s_D$} (AR);
		
		\draw[-Stealth,\normcolor] (Ae) to[bend left=20] node[fill=white,near end]{\tiny$\nm_e^L$} (AL);	
		\draw[-Stealth,\normcolor] (Ae) to[bend left=25] node[fill=white,near end]{\tiny$\nm_e^D$} (AD);	
		\draw[-Stealth,\normcolor] (Ae) to[bend left=20] node[fill=white,near end]{\tiny$\nm_e^R$} (AR);					
		\draw[-Stealth] (AD) --node[fill=white,rotate=-90]{\tiny $s_D \mapsto 2$} (Ae);
		\draw[-Stealth] (AL) --node[fill=white,rotate=-45]{\tiny $s_D \mapsto 2$} (Ae);
		\draw[-Stealth] (AR) --node[fill=white,rotate=45]{\tiny $2 \mapsfrom s_D$} (Ae);	

		\draw[-Stealth,\inductioncolor] 
			(AD) 	to[bend right=20]
					node[fill=white,rotate=90]{\tiny$1 \mapsto t_D$}
					node[below,rotate=90]{\tiny$s_D \mapsto t_Lt_R$}
			(AK);
		\draw[-Stealth,\inductioncolor] 
			(AL) 	to[bend right=20]
					node[fill=white,rotate=45]{\tiny$1 \mapsto t_L$}
					node[below,rotate=45]{\tiny$s_L \mapsto t_Dt_R$}
			(AK);
		\draw[-Stealth,\inductioncolor] 
			(AR) 	to[bend right=20]
					node[fill=white,rotate=-45]{\tiny$t_Lt_D \mapsfrom s_R$}
					node[above,rotate=-45]{\tiny$t_R \mapsfrom 1$}					
			(AK);
		
		\draw[-Stealth,\normcolor] 
			(AD) 	to[bend left=20]
					node[very near start, left]{\tiny $\nm_D^K$}
			(AK);
		\draw[-Stealth,\normcolor] 
			(AL) 	to[bend left=20]
					node[pos=0.4,fill=white]{\tiny $\nm_L^K$}
			(AK);
		\draw[-Stealth,\normcolor] 
			(AR) 	to[bend left=15]
					node[fill=white,pos=0.19]{\tiny $\nm_R^K$}
			(AK);
			
		\draw[-Stealth] 
			(AK) 	-- 
					node[rotate=-90,fill=white]{\tiny $t_L,t_R \mapsto s_D$} 
					node[below,rotate=-90]{\tiny $t_D \mapsto 2$}
			(AD);
		\draw[-Stealth] 
			(AK) 	-- 
					node[rotate=-45,fill=white]{\tiny $t_L,t_D \mapsto s_R$} 
					node[below,rotate=-45]{\tiny $t_R \mapsto 2$}
			(AR);
		\draw[-Stealth] 
			(AK) 	-- 
					node[rotate=45,fill=white]{\tiny $s_L \mapsfrom t_D,t_R$} 
					node[below,rotate=45]{\tiny $2 \mapsfrom t_L$}
			(AL);
	\end{tikzpicture}	
\end{center}
\end{figure}

\begin{proposition}
\label{prop:Norm e to K top level}
	The value of the $\K$-Tambara functor $n_e^\K(\F_2)$ at the trivial orbit $\K/\K$ is the ring
	\[
		n_e^\K(\F_2)(\K/\K) \iso \bigsfrac{\Z/8[\genUL,\genUR]}{(2\genUL\genUR,2\genUR,\genUL^2,\genUR^2,\genUL\genUR)},
	\]
	where $\genUL$ and $\genUR$ are the images of $t_L+2$ and $t_R+2$, respectively, under the surjection $\ul{A} \onto n_e^\K(\Z/2)$. This surjection sends $t_D$ to $\genUL+\genUR+2$.
\end{proposition}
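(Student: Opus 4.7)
The plan is to invoke \cref{norms as quotients} to identify $n_e^\K(\F_2)$ with $\uA/\uI$, where $\uI$ is the Tambara ideal of the $\K$-Burnside Tambara functor generated by $2 \in \uA(\K/e)$. The task then reduces to computing the ordinary ideal $\uI(\K/\K) \subseteq A(\K)$ explicitly and identifying the resulting quotient ring.

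First I would enumerate a generating set for $\uI(\K/\K)$ by chasing $2 \in \uA(\K/e)$ through the Mackey--Tambara structure of $\uA$ displayed in \cref{figure:BurnsideTambaraFunctor}. Since $\uI$ is closed under transfer, norm, and multiplication by $\uA(\K/\K)$, three families of elements suffice: the full transfer $\tr_e^\K(2) = 2q$, where $q := t_Lt_D = t_Lt_R = t_Dt_R$; the mixed composites $\tr_H^\K \nm_e^H(2) = \tr_H^\K(2 + s_H) = 2 t_H + q$ for each $H \in \{L, D, R\}$; and the full norms $\nm_H^\K \nm_e^H(2) = 2q + t_L + t_D + t_R + 2$, computed by substituting $(a,b) = (1,2)$ into the explicit formula for $\nm_L^\K$ recorded above.

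Next I would reduce these generators using the Burnside relations $t_\bullet^2 = 2t_\bullet$ and $t_Lt_D = t_Lt_R = t_Dt_R = q$. Subtracting the first family from the third yields $t_L + t_D + t_R + 2 \in \uI$; multiplying this by $t_L$ gives $4 t_L + 2q \in \uI$, hence $4 t_L \in \uI$. Doubling $t_L + t_D + t_R + 2 \in \uI$ and substituting $2t_H \equiv -q$ from the second family produces $4 - 3q \in \uI$, and combining with $2q \in \uI$ yields $q \equiv -4$, $2t_H \equiv 4$, and finally $8 \in \uI$. Performing the change of variables $\genUL := t_L + 2$, $\genUR := t_R + 2$, one verifies that $t_D \equiv \genUL + \genUR + 2 \pmod{\uI}$ from $t_L + t_D + t_R + 2 \equiv 0$ modulo $2 t_L \equiv 2 t_R$ and $8 \equiv 0$, and translates the remaining generators of $\uI(\K/\K)$ into the claimed vanishing relations among $\genUL$ and $\genUR$.

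The main obstacle is the converse inclusion: verifying that $\uI(\K/\K)$ contains no relation beyond those already listed, so that the evident surjection from the target ring is an isomorphism. I would handle this by exhibiting a small additive spanning set for the proposed quotient and comparing its order with a direct reduction of arbitrary polynomials in $t_L, t_D, t_R$ modulo the generators above.
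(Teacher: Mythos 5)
Your proposal follows the same overall route as the paper's proof: invoke \cref{norms as quotients} to present $n_e^\K(\F_2)(\K/\K)$ as $A(\K)$ modulo the ideal generated by all transfers and norms of $2 \in \uA(\K/e)$, and then identify the quotient. The genuine difference is in how that ideal is handled. The paper lists eight generators (the composites $\tr_e^\K$, $\nm_e^\K$, $\tr_H^\K\nm_e^H$, $\nm_H^\K\tr_e^H$ applied to $2$) and delegates the simplification to a SAGE Gr\"obner basis computation; you instead trim the generating set to three families --- correctly noting that each $\nm_H^\K\tr_e^H(2) = 2t_{H'} + 2t_{H''} + 2q$ is a sum of two of the elements $\tr_H^\K\nm_e^H(2) = 2t_H + q$ and hence redundant --- and carry out the reduction by hand. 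Your reductions are correct: $t_L+t_D+t_R+2 \in \uI$, then $q \equiv -4$, $2t_H \equiv 4$, $8 \equiv 0$, which reproduces exactly the Gr\"obner basis reported in the paper, and the change of variables $\genUL = t_L+2$, $\genUR = t_R+2$ is handled the same way. What you buy is a computer-free argument; what you correctly identify as the remaining crux --- that the quotient is no smaller than claimed --- is the one step you only sketch. The sketch is sound (exhibit $\{1,\genUL,\genUR\}$ as an additive spanning set with orders $8,2,2$, e.g.\ by constructing a ring surjection onto $\Z/8 \oplus \Z/2\{\genUL\} \oplus \Z/2\{\genUR\}$ and counting), and it is precisely the content that the SAGE computation certifies in the paper, so I regard the proposal as complete at the same level of rigor. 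Incidentally, your derivation produces $2\genUL$ in the ideal, agreeing with the paper's proof and with \cref{figure:NormOfZmod2}; the generator $2\genUL\genUR$ appearing in the statement of \cref{prop:Norm e to K top level} is evidently a typo for $2\genUL$.
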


\begin{proof}
	We apply \cref{norm from e to C2 of F2}: $n_e^\K(\F_2)$ is the quotient of $\uA$ by the ideal generated by $2 \in \uA(\K/e)$. Thus, $n_e^\K(\F_2)(\K/\K)$ is the quotient of $\uA(\K/\K)$ by all norms and transfers of $2$. 
	In other words, we are asking for the quotient of $\Z[t_L, t_D, t_R]$ by the ideal generated by the relations in the Burnside ring $A(\K)$:
		\begin{align*}
			&t_Lt_D -t_Lt_R\\
			&t_Lt_R - t_Dt_R\\
			&t_L^2 - 2t_L\\
			&t_D^2 - 2t_D\\
			&t_R^2 - 2t_R
		\end{align*}
		and the relations imposed by the ideal of $\uA$ generated by $2 \in \uA(\K/e)$:
		\begin{align}
			\tr_e^\K(2) &= 2t_Lt_D \notag \\
			\nm_e^\K(2) &= 2t_Lt_D + t_L + t_D + t_R + 2. \label{top of ideal gen by 2 in A(K/e)} \notag \\
			\tr_L^\K(\nm_e^L(2)) &= 2 t_L + t_Dt_R \notag \\
			\nm_L^\K(\tr_e^L(2)) &= 2 t_D + 2 t_R + 2 t_D t_R \notag \\ 
			\tr_D^\K(\nm_e^D(2)) &= 2 t_D + t_Lt_R  \\
			\nm_D^\K(\tr_e^D(2)) &= 2 t_L + 2 t_R + 2 t_L t_R \notag \\ 
			\tr_R^\K(\nm_e^R(2)) &= 2 t_R + t_Lt_D \notag \\
			\nm_R^\K(\tr_e^R(2)) &= 2 t_L + 2 t_D + 2 t_L t_D. \notag 
		\end{align}
		By SAGE, a Gr\"obner basis for this ideal is:
		\[
			t_L^{2} + 4, \quad t_L t_R + 4, \quad t_R^{2} + 4, \quad t_L + t_D + t_R + 2, \quad 2 t_L + 4, \quad 2 t_R + 4, \quad 8.
		\]
		Let $I$ be the ideal generated by the above. Write $\bar t_L$, $\bar t_D$, and $\bar t_R$ for the images of $t_L, t_D,$ and $t_R$ in the quotient $\Z[t_L, t_D, t_R]/I$. Since $8 \in I$, the $\Z$ becomes a $\Z/8$. We may eliminate any one of $\bar t_L, \bar t_D,$ or $\bar t_R$ using the relation $t_L + t_D + t_R + 2$. Thus, we have 
		\[
			\bigsfrac{\Z[t_L,t_D,t_R]}{I} \cong \bigsfrac{\sfrac{\Z}{8}[\bar t_L, \bar t_R]}{(2\bar t_L+4, 2\bar t_R+4,\bar t_L^2+4,\bar t_R^2+4,\bar t_L\bar t_R+4)}.
		\]
		Setting $\genUL = \bar t_L + 2$ and $\genUR = \bar t_R + 2$, we have 
		\[
			\bigsfrac{\Z[t_L,t_D,t_R]}{I} \cong \bigsfrac{\sfrac{\Z}{8}[\genUL,\genUR]}{(2\genUL,2\genUR,\genUL^2,\genUR^2,\genUL\genUR)}. 
		\]
\end{proof}

\begin{figure}
\caption{The Tambara functor $n_e^\K(\F_2)$.}
\label{figure:NormOfZmod2}
	\begin{center}
	\begin{tikzpicture}[scale=0.9]
		\node (Ae) at (0,-4) {$\Z/2$};
		\node (AL) at (-4,0) {$\Z/4$};
		\node (AD) at (0,0) {$\Z/4$};
		\node (AR) at (4,0) {$\Z/4$};
		\node (AK) at (0,4) {$\bigsfrac{\sfrac{\Z}{8}[\genUL,\genUR]}{\big(2\genUL,2\genUR,\genUL^2,\genUR^2,\genUL\genUR\big)}$};
					
		\draw[-Stealth,\inductioncolor] (Ae) to[bend right=20] node[fill=white]{$2$} (AL);
		\draw[-Stealth,\inductioncolor] (Ae) to[bend right=25] node[fill=white]{$2$} (AD);
		\draw[-Stealth,\inductioncolor] (Ae) to[bend right=20] node[fill=white]{$2$} (AR);
		
		\draw[-Stealth,\normcolor] (Ae) to[bend left=20] node[fill=white]{$1$} (AL);	
		\draw[-Stealth,\normcolor] (Ae) to[bend left=25] node[fill=white]{$1$} (AD);	
		\draw[-Stealth,\normcolor] (Ae) to[bend left=20] node[fill=white]{$1$} (AR);					
		
		\draw[-{Stealth[] Stealth}] (AD) -- (Ae);
		\draw[-{Stealth[] Stealth}] (AL) -- (Ae);
		\draw[-{Stealth[] Stealth}] (AR) -- (Ae);	

		\draw[-Stealth,\inductioncolor] 
			(AD) 	to[bend right=20]
					node[fill=white,rotate=90]{\tiny$1 \mapsto \genUL+\genUR-2$}
			(AK);
		\draw[-Stealth,\inductioncolor] 
			(AL) 	to[bend right=20]
					node[fill=white,rotate=45]{\tiny$1 \mapsto \genUL-2$}
			(AK);
		\draw[-Stealth,\inductioncolor] 
			(AR) 	to[bend right=20]
					node[fill=white,rotate=-45]{\tiny$\genUR-2 \mapsfrom 1$}					
			(AK);
		
		\draw[-Stealth,\normcolor] 
			(AD) 	to[bend left=20]
					node[pos=0.4, fill=white,xshift=-2pt]{\tiny $\nm_D^K$}
			(AK);
		\draw[-Stealth,\normcolor] 
			(AL) 	to[bend left=20]
					node[fill=white]{\tiny $\nm_L^K$}
			(AK);
		\draw[-Stealth,\normcolor] 
			(AR) 	to[bend left=15]
					node[fill=white,near start]{\tiny $\nm_R^K$}
			(AK);
			
		\draw[-Stealth] 
			(AK) 	-- 
					node[rotate=-90,fill=white]{\tiny $\genUL,\genUR \mapsto 0$} 
			(AD);
		\draw[-Stealth] 
			(AK) 	-- 
					node[rotate=-45,fill=white]{\tiny $\genUL,\genUR \mapsto 0$} 
			(AR);
		\draw[-Stealth] 
			(AK) 	-- 
					node[rotate=45,fill=white]{\tiny $0 \mapsfrom \genUL,\genUR$} 
			(AL);
	\end{tikzpicture}	
\end{center}
\end{figure}

Together with the fact that 
\begin{equation}
\label{eq:resnorm}
	\res^\K_H n_e^\K(\F_2) \cong \res^\K_H n_H^\K n_e^H(\F_2) \cong \left( n_e^H(\F_2) \right)^{\boxtimes 2} \cong n_e^{H}(\F_2^{\otimes 2}) \cong n_e^{H}(\F_2)
\end{equation}
for any nontrivial proper subgroup $H$ of $\K$, this proposition allows us to determine the $\K$-Tambara functor $n_e^\K(\F_2)$, pictured in \cref{figure:NormOfZmod2}. The norms can be determined from Tambara reciprocity:  

\begin{proposition}[Tambara Reciprocity \cite{HM2019}*{Theorem 2.5}] 
\label{prop:TambaraReciprocity}
Let $H$ be any of the order $2$ subgroups of $\K$. In any $\K$-Tambara functor, we have:
\[
	\nm_H^\K(a + b) = \nm_H^\K(a) + \nm_H^\K(b) + \tr_H^\K(a\,(\gamma H \cdot b)),
\]
where $\gamma H$ is the non-identity coset of $H$ inside $\K$. 
\end{proposition}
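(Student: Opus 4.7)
The plan is to derive this from the general Tambara reciprocity exchange axiom applied to the composition of $\K$-maps
\[
(\K/H) \sqcup (\K/H) \xto{\nabla} \K/H \to \K/\K,
\]
where $\nabla$ is the fold map. Writing $a + b = \tr_\nabla(a,b)$ for $(a,b) \in T\bigl((\K/H) \sqcup (\K/H)\bigr) \cong T(\K/H) \times T(\K/H)$, we have $\nm_H^\K(a+b) = \nm_H^\K \circ \tr_\nabla(a,b)$. The Tambara exchange axiom rewrites this composite of a norm following a transfer as a composite $\tr \circ \nm \circ \res$ indexed by the exponential (dependent product) diagram for $\nabla$ over $\K/H \to \K/\K$. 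So the task reduces to identifying this exponential diagram and evaluating each summand it produces.

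The key combinatorial step is computing the dependent product of $(\K/H) \sqcup (\K/H) \to \K/H$ along $\K/H \to \K/\K$, which is the $\K$-set of sections of $(\K/H) \sqcup (\K/H) \to \K/\K$, identified with the set of labellings $\K/H \to \{1,2\}$. Since $|\K/H| = 2$, there are four labellings: two constant ones (each fixed by $\K$, yielding orbits $\K/\K$) and two non-constant ones, which are swapped by the nontrivial coset $\gamma H$ and together form a single free $\K/H$-orbit. Hence this dependent product decomposes as $\K/\K \sqcup \K/\K \sqcup \K/H$, producing exactly three contributions.

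To identify each contribution, I would trace the evaluation map of the exponential diagram on representatives of the three orbits. The constant labelling $\K/H \to \{1\}$ pulls $(a,b)$ back to $a$ on every coset, and the norm-then-transfer piece collapses to $\nm_H^\K(a)$; symmetrically the constant labelling sending everything to $2$ yields $\nm_H^\K(b)$. Choosing the non-constant labelling $H \mapsto 1, \; \gamma H \mapsto 2$ as a representative of the free orbit, the norm at its stabilizer $H$ forms the product $a \cdot (\gamma H \cdot b)$ in $T(\K/H)$, the Weyl-translate arising because evaluating the pulled-back section at the non-identity coset $\gamma H$ picks out the second coordinate; the outer transfer is then $\tr_H^\K$.

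The main obstacle is pinning down the Weyl twist in this third term: one must verify that evaluating on the non-constant labelling genuinely produces $\gamma H \cdot b$ rather than an untwisted $b$ or a twist of $a$. This is a careful but routine check about how the projection from the exponential diagram interacts with the swap action on the two copies of $\K/H$. Once this is established, summing the three orbit contributions yields $\nm_H^\K(a) + \nm_H^\K(b) + \tr_H^\K\bigl(a\,(\gamma H \cdot b)\bigr)$, as claimed.
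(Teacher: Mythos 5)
Your argument is correct, and the orbit count is right: the set of sections of the fold map $(\K/H)\sqcup(\K/H)\to\K/\K$ does decompose as $\K/\K\sqcup\K/\K\sqcup\K/H$, and the Weyl twist you flag as the main obstacle does come out as claimed. Tracing the free orbit through the exponential diagram with representative section $s_0(H)=1$, $s_0(\gamma H)=2$: the fiber $\K/H\times_{\K/\K}\{s_0,\gamma s_0\}$ splits into two $\K/H$-orbits, one of which maps under evaluation isomorphically onto the first copy of $\K/H$ (pulling back $a$), while the other maps onto the second copy via $H\mapsto \gamma H$, i.e.\ through the nontrivial Weyl element, so it pulls back $\gamma H\cdot b$ rather than $b$; the norm along the double cover then produces $a\,(\gamma H\cdot b)$ and the outer transfer is $\tr_H^\K$. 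Be aware that the paper offers no proof of this proposition --- it is quoted directly from Hill--Mazur \cite{HM2019}*{Theorem 2.5} --- so your derivation from the exchange axiom is a self-contained substitute for that citation rather than a variant of an argument in the text; it is the standard way such reciprocity formulas are established, and nothing in it depends on $H$ having order $2$ beyond the explicit enumeration of the four labellings.
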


\noindent Note that because $n_e^\K(\F_2)$ is a quotient of the Burnside $\K$-Tambara functor, the Weyl actions are trivial. Thus, in the Tambara reciprocity formula for $n_e^\K(\F_2)$, $a\,(\gamma H \cdot b)$ becomes simply $ab$. 

In particular, $\nm_L^\K(0) = 0$, $\nm_L^\K(1) = 1$, $\nm_L^\K(2) = \genUL$ and $\nm_L^\K(3) = \genUL-3$.
Similar considerations yield formulas for $\nm_D^\K$ and $\nm_R^\K$.

\begin{example}
	Recall \cite{BGHL}*{Section~5.2} that the geometric fixed points $\Phi^H \ul{M}$ of a $\K$-Mackey functor is obtained  by first quotienting at every level by all transfers up from subgroups not containing $H$ and then forgetting all levels $\K/J$, where $J$ does not contain $H$. This interacts with norms according to the formula \cite{BGHL}*{Theorem~5.15}
	\[
		\Phi^H N_J^\K \ul{M} \iso N_e^{K/H} \res^J_e \ul{M}.
	\]
	In particular, we find that
	\[
		\Phi^L N_e^\K(\F_2) \iso \Phi^L N_L^\K N_e^L(\F_2) \iso N_e^{\K/L} \res^L_e N_e^L(\F_2) \iso N_e^{\K/L}(\F_2),
	\]
	which is the Green functor
	\[
		\begin{tikzcd}[row sep=large]
			\Z/4 
				\ar[d, two heads]
				\\
			\Z/2.
				\ar[u, bend right=30, "2"', orange]
		\end{tikzcd}
	\]
	{Indeed, the elements $\genUL+\genUR-2$ and $\genUR-2$ in $N_e^\K(\F_2)$ are in the image of the transfers from $D$ and $R$, respectively. Setting those elements equal to zero produces $\Z/4$ as a quotient ring:}
	\[
		\frac{\Z/8 \oplus \Z/2\{\genUL, \genUR\}}{ -2 + \genUR, -2 + \genUL + \genUR} \cong \Z/4. 
	\]
\end{example}

\subsection{The Norm \texorpdfstring{\for{toc}{$N_{C_2}^\K \F_2$}\except{toc}{$N_{C_2}^\K(\underline{\F_2})$}}{from C_2 to K of Constant F_2}}

We can use \cref{norms as quotients} to compute the norms $n_{H}^\K(\underline{\F_2})$ for $H \in \{L,D,R\}$. We will focus on the case $H = D$; the other cases are similar. 

\begin{proposition}
	\label{top level of nDKF2}
	The value of the $\K$-Tambara functor $n_D^\K(\underline{\F_2})$ at the trivial orbit $\K/\K$ is the ring 
	\[
		n_D^\K(\underline{\F_2})(\K/\K) \cong \bigsfrac{\Z/4[\genVR]}{(2\genVR, \genVR^2),}
	\]
	where $\genVR$ is the image of $t_R+2$ under the surjection $\uA \twoheadrightarrow n_D^\K(\underline{\F_2})$. 
\end{proposition}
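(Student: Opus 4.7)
The plan is to apply \cref{norms as quotients} to realize $n_D^\K(\ulF)$ as the quotient $\uA/\uI$, where $\uI$ is the $\K$-Tambara ideal generated by $2 \in \uA(\K/D)$, and then to enumerate generators of $\uI(\K/\K)$ as an ordinary ideal of $\uA(\K/\K)$, exactly as in the proof of \cref{prop:Norm e to K top level}.

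First, I would determine $\uI$ at the lower levels, since top-level generators arise only as transfers or norms of elements at $\K/H$ for $H \in \{e, L, D, R\}$. Restricting $2 \in \uI(\K/D)$ to $\K/e$ yields $2 \in \uI(\K/e)$, so $\uI(\K/e) = 2\Z$. Pushing up via $\nm_e^D$ gives $\nm_e^D(2) = s_D + 2 \in \uI(\K/D)$, and subtracting $2$ shows $s_D \in \uI(\K/D)$, so $\uI(\K/D) = (2, s_D)$. Analogously, $\uI(\K/L) \supseteq (s_L + 2, 2 s_L)$ and $\uI(\K/R) \supseteq (s_R + 2, 2 s_R)$.

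Next, I would push these elements up to $\K/\K$. The crucial top-level contributions are
\[
\tr_D^\K(2) = 2 t_D, \qquad \tr_D^\K(s_D) = t_L t_R, \qquad \nm_D^\K(2) = t_D + 2, \qquad \nm_D^\K(s_D) = t_L + t_R,
\]
computed from the formulas for $\nm_D^\K$ analogous to those displayed for $\nm_L^\K$. The remaining transfers and norms from $\K/L$, $\K/R$, and $\K/e$ (for instance $\tr_L^\K(s_L + 2) = 2 t_L + t_L t_R$ and $\nm_L^\K(s_L + 2) = 2 t_L t_R + t_L + t_D + t_R + 2$) turn out to be redundant modulo these four elements together with the Burnside-ring relations.

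Finally, I would simplify. Working modulo $\uI$ in $\uA(\K/\K)$: the relation $t_D + 2 \equiv 0$ eliminates $t_D$, the relation $t_L + t_R \equiv 0$ eliminates $t_L$, and $t_L t_R \equiv 0$ combined with the Burnside relation $t_R^2 = 2 t_R$ forces $t_R^2 \equiv 0$. Moreover $(t_D + 2)^2 = t_D^2 + 4 t_D + 4 = 6 t_D + 4 \in \uI$, and $6 t_D \in \uI$, so $4 \in \uI$ and the integer coefficients collapse to $\Z/4$. The quotient therefore reduces to $(\Z/4)[t_R]/(2 t_R, t_R^2)$, and setting $\genVR = t_R + 2$ gives $2 \genVR \equiv 2 t_R$ and $\genVR^2 \equiv t_R^2$ modulo $4$, recovering the presentation in the statement. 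The main obstacle is bookkeeping: ensuring that no combination of the top-level generators secretly produces $2 \in \uI$, which would collapse $\Z/4$ to $\Z/2$. As in the proof of \cref{prop:Norm e to K top level}, this verification is safest to perform by running a Gr\"obner basis computation on the explicit generating set in $\Z[t_L, t_D, t_R]$.
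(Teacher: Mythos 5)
Your proposal is correct, but it takes a genuinely different route from the paper. You compute the top-level ideal from scratch inside the Burnside ring $\uA(\K/\K)=\Z[t_L,t_D,t_R]/(\dots)$, collecting all norms and transfers of the generators of $\uI$ at the four lower levels and then simplifying (or, as you suggest, running a fresh Gr\"obner basis computation). The paper instead observes that since $\res^D_e(2)=2$, the Tambara ideal generated by $2\in\uA(\K/D)$ contains the one generated by $2\in\uA(\K/e)$, so $n_D^\K(\ulF)$ is a further quotient of the already-computed $n_e^\K(\F_2)$ from \cref{prop:Norm e to K top level}; at the top level one then only needs the two extra generators $\nm_D^\K(2)=\genUL+\genUR$ and $\tr_D^\K(2)=-4$, computed by Tambara reciprocity directly in the small ring $\sfrac{\Z}{8}[\genUL,\genUR]/(2\genUL,2\genUR,\genUL^2,\genUR^2,\genUL\genUR)$, where the quotient by $(\genUL+\genUR,-4)$ is transparent. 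The trade-off: your approach is self-contained and makes the geometry of the ideal in $\Z[t_L,t_D,t_R]$ explicit (your four key elements $2t_D$, $t_Lt_R$, $t_D+2$, $t_L+t_R$ do generate the right ideal together with the Burnside relations, and your derivation of $4\in\uI$ from $(t_D+2)^2$ is correct), but it carries the burden of verifying that the many remaining norms and transfers from $\K/e$, $\K/L$, $\K/R$ are redundant and that $2$ itself does not sneak into the ideal --- precisely the bookkeeping the paper's two-step quotient avoids. One small caution with your method: be careful that $\uI(\K/L)$ contains $4$ but not $2$ (so $\nm_L^\K(2)=s_L^{\phantom{1}}\!\mapsto t_L+2$ is \emph{not} forced into the top-level ideal); conflating these would incorrectly kill $\genVR$.
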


\begin{proof}
	Let $\uI$ be the Tambara ideal of $\uA$ generated by $2 \in \uA(\K/D)$. By \cref{norms as quotients}, $n_D^\K(\underline{\F_2})$ is the quotient of $\uA$ by $\uI$. Since $\res^D_e(2) = 2$, this ideal contains the ideal of $\uA$ generated by $2 \in \uA(\K/e)$. Hence, $n_D^\K(\underline{\F_2})$ is a further quotient of $n_e^\K(\F_2)$ by the Tambara ideal $\underline{J}$ generated by $2 \in n_e^\K(\F_2)(\K/D)$. 
	
	At the top level $n_e^\K(\F_2)(\K/\K)$, the ideal $\underline{J}(\K/\K)$ is generated by $\nm_D^\K(2)$ and $\tr_D^\K(2)$. The $\K/L$ and $\K/R$ levels do not contribute any generators because $\underline{J}(\K/L) = \underline{J}(\K/R) = 0$. By Tambara reciprocity, 
	\begin{align*}
		\nm_D^\K(2) = \nm_D^\K(1+1) &= \nm_D^\K(1) + \nm_D^\K(1) + \tr_D^\K(1) \\
		&= 1 + 1 + (\genUL + \genUR -2) \\
		&= \genUL + \genUR \\[1em]
		\tr_D^\K(2) &= 2(\genUL + \genUR - 2) \\
			&= 2 \genUL + 2\genUR - 4 \\
			&= -4
	\end{align*}
	So $n_D^\K(\underline{\F_2})(\K/\K)$ is the quotient of $n_e^\K(\F_2)(\K/\K)$ by the ideal $(\genUL + \genUR, -4)$. From \cref{prop:Norm e to K top level},
	\[
		n_e^\K(\F_2)(\K/\K) \cong \bigsfrac{\sfrac{\Z}{8}[\genUL,\genUR]}{(2\genUL,2\genUR,\genUL^2,\genUR^2,\genUL\genUR)}.
	\]
	The relation $\genUL + \genUR$ allows us to identify the two generators, and the $-4$ allows us to replace the $\Z/8$ by a $\Z/4$. Writing $\genVR$ for the image of $\genUR$ in the quotient, we have
	\[
		n_D^\K(\underline{\F_2})(\K/\K) \cong \bigsfrac{\sfrac{\Z}{4}[\genVR]}{(2\genVR,\genVR^2).} \qedhere
	\]
\end{proof}

{
\begin{remark}
	In the proof of the previous proposition, we simplified the computation of the Tambara functor  $n_D^\K(\ul{\F_2})$ by recognizing it as a quotient of $n_e^\K(\ul{\F_2})$. Such a strategy  for computing norms of $\ul{\F_2}$ should work for other groups as well. Namely, let $G$ be a finite group with a subgroup $H$ such that the $H$-Tambara functor $\ul{\F_2}$ is isomorphic to $\uA/(2_{H/H})$, where the $(2_{H/H})$ is the $H$-Tambara ideal of $\uA$ generated by $2 \in \uA(H/H)$. Then $n_e^G(\F_2) \cong \uA/(2_{G/e})$ and $n_H^G(\ul{\F_2}) \cong \uA/(2_{G/H})$. The ideal $(2_{G/H})$ contains the ideal $(2_{G/e})$ because  $\res^H_e(2) = 2$, so we may realize $n_H^G(\ul{\F_2})$ as the quotient of $n_e^G(\F_2)$ by the Tambara ideal generated by the element $2 \in n_e^G(\F_2)(G/H)$. 
	
	In particular, when $H$ is a maximal proper subgroup, $n_H^G(\ul{\F_2})(G/G)$ is the quotient of $n_e^G(\F_2)(G/G)$ by the ideal generated by $\tr_H^G(2)$ and $\nm_H^G(2)$. When $H$ is not maximal, the generating set is a little more complicated; see \cite{ideals}*{Proposition 3.4} for a description of the generators. 
\end{remark}
}

Together with the facts that $\res^\K_D n_D^\K(\underline{\F_2}) = \underline{\F_2}$ {as in \zcref{eq:resnorm}} and $\res^\K_H n_D^\K(\underline{\F_2}) \cong n_e^{C_2}(\Z/2)$ {via the double coset formula} for $H \in \{L,R\}$, \cref{top level of nDKF2} allows us to determine the $\K$-Tambara functor $n_D^\K(\underline{\F_2})$, pictured in \cref{figure:normLtoK}. 

The norm $\nm_D^\K$ is determined by $\nm_D^\K(0) = 0$ and $\nm_D^\K(1) = 1$. The norms $\nm_H^\K$ for $H \in \{L,R\}$ are determined by $\nm_H^\K(0) = 0$, $\nm_H^\K(1) = 1$, and Tambara reciprocity (\cref{prop:TambaraReciprocity}). In particular, $\nm_H^\K(2) = \genVR$ and $\nm_H^\K(3) = 1+\genVR$ for $H \in \{L,R\}$.

\begin{figure}
\caption{The $\K$-Tambara functor $n_D^\K(\ulF)$.}
\label{figure:normLtoK}
\begin{center}
	\begin{tikzpicture}[scale=0.9]
		\node (K) at (0,4) {$\bigsfrac{\sfrac{\Z}{4}[\genVR]}{(2\genVR,\genVR^2)}$};
		\node (L) at (-4,0) {$\bigsfrac{\Z}{4}$};
		\node (D) at (0,0) {$\bigsfrac{\Z}{2}$};
		\node (R) at (4,0) {$\bigsfrac{\Z}{4}$};
		\node (e) at (0,-4) {$\bigsfrac{\Z}{2}$};

		\draw[-{Stealth[] Stealth}] (L) to (e);
		\draw[-{Stealth[] Stealth}] (R) to (e);
		\draw[-Stealth] (D) to node[fill=white]{\tiny 1} (e);
		\draw[-Stealth] (K) to node[fill=white, rotate=45]{\tiny $0 \mapsfrom \genVR$} (L);
		\draw[-Stealth] (K) to node[fill=white, rotate=90]{\tiny $0 \mapsfrom \genVR$} (D);
		\draw[-Stealth] (K) to node[fill=white, rotate=-45]{\tiny $\genVR \mapsto 0$} (R);

		\draw[bend right=20,color=\inductioncolor,-Stealth] (e) to node[fill=white]{\tiny 2} (L);
		\draw[bend right=25,color=\inductioncolor,-Stealth] (e) to node[fill=white]{\tiny 2} (R);
		\draw[bend right=20,color=\inductioncolor,-Stealth] (e) to node[fill=white]{\tiny 0} (D);
		\draw[bend right=20,color=\inductioncolor,-Stealth] (L) to node[fill=white,rotate=45]{\tiny $2+\genVR$} (K);
		\draw[bend right=20,color=\inductioncolor,-Stealth] (D) to node[fill=white]{\tiny $2$} (K);
		\draw[bend right=25,color=\inductioncolor,-Stealth] (R) to node[fill=white,rotate=-45]{\tiny $2+\genVR$}(K);

		\draw[bend left=25,color=\normcolor,-Stealth] (e) to node[fill=white]{\tiny 1} (L);
		\draw[bend left=15,color=\normcolor,-Stealth] (e) to node[fill=white]{\tiny 1} (R);
		\draw[bend left=15,color=\normcolor,-Stealth] (e) to node[fill=white]{\tiny 1} (D);
		\draw[bend left=25,color=\normcolor,-Stealth] (L) to node[fill=white]{\tiny $\nm_L^\K$} (K);
		\draw[bend left=20,color=\normcolor,-Stealth] (D) to node[fill=white,inner sep=.5]{\tiny $\nm_D^\K$} (K);
		\draw[bend left=20,color=\normcolor,-Stealth] (R) to node[fill=white]{\tiny $\nm_R^\K$} (K);
	\end{tikzpicture}
\end{center}
\end{figure}

The following result about $n_D^\K \ulF$-modules will be of use later in the analysis of exact sequences of Mackey functors.

\begin{lemma} 
\label{DVanishesImplies2Torsion}
Suppose that $\ul{M}$ is a module over $n_D^\K \ulF$ such that $\ul{M}(\K/D)=0$. Then $\ul{M}(\K/\K)$ is 2-torsion.
\end{lemma}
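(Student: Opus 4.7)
The plan is to exploit the Frobenius reciprocity relation in the module structure together with a specific value of the transfer in the Tambara functor $n_D^\K \ulF$. The key observation is that, reading off from \cref{figure:normLtoK}, the transfer $\tr_D^\K \colon n_D^\K\ulF(\K/D) \to n_D^\K\ulF(\K/\K)$ sends $1 \in \Z/2$ to $2 \in \Z/4[\genVR]/(2\genVR, \genVR^2)$. So the element $2$ in the top-level ring is exactly a transfer from $D$.

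Given this, take any $m \in \ul{M}(\K/\K)$. By Frobenius reciprocity applied to the module structure, with $1 \in n_D^\K\ulF(\K/D)$ and $m \in \ul{M}(\K/\K)$, we have
\[
2 \cdot m \;=\; \tr_D^\K(1) \cdot m \;=\; \tr_D^\K\bigl(1 \cdot \res^\K_D(m)\bigr) \;=\; \tr_D^\K\bigl(\res^\K_D(m)\bigr).
\]
Since by hypothesis $\ul{M}(\K/D) = 0$, the element $\res^\K_D(m)$ vanishes, and hence $2m = 0$. This shows that every element of $\ul{M}(\K/\K)$ is 2-torsion.

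There is no real obstacle here; the argument is essentially a one-line application of Frobenius reciprocity once one reads off the value of $\tr_D^\K(1)$ from \cref{figure:normLtoK}. The only thing one needs to be a little careful about is which form of Frobenius reciprocity to cite (the compatibility of module multiplication with transfers from $D$), but this is standard for modules over Green functors.
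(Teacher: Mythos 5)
Your proof is correct and is essentially identical to the paper's argument: both apply Frobenius reciprocity to identify $2\cdot m$ with $\tr_D^\K\res^\K_D(m)$, which vanishes because $\ul{M}(\K/D)=0$. The only cosmetic difference is that the paper writes the key scalar as $\tr_D^\K\res^\K_D(1)$ rather than reading $\tr_D^\K(1)=2$ off the figure directly.
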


\begin{pf}
Let $x\in \ul{M}(\K/\K)$. Since $\ul{M}(\K/D)$ vanishes, we certainly have that $\tr_D^\K \res^\K_D x$ must be zero. On the other hand, Frobenius reciprocity gives
\[
	\tr_D^\K \res^\K_D(x) = \tr_D^\K \res^\K_D(1\cdot x) = \tr_D^\K\res^\K_D(1) \cdot x = 2\cdot x.\qedhere
\]
\end{pf}


\section{Homotopical Background}
\label{sec:back}

In this section, we collect some facts that will be useful in our computations below. 

\subsection{Eilenberg--Mac Lane $\K$-Spectra}

We review here some facts about equivariant Eilenberg--Mac Lane spectra for the group $\K$. Similar statements hold more generally for any finite group.

The Eilenberg--Mac Lane spectrum of a $\K$-Mackey $\ul{M}$ functor is the $\K$-spectrum $\KH \ul{M}$ whose homotopy Mackey functors are determined by 
\[
	\upi_n \KH \ul{M} = 
	\begin{cases}
		\ul{M} & n = 0\\
		0 & n \neq 0. 
	\end{cases}
\]
Although the only nonzero integer-graded homotopy group of $\KH \ul{M}$ is $\upi_0$, suspensions of $\KH\ul{M}$ by representation spheres may have more complicated homotopy. See, for example, \cref{fig:piC2F2} in the $C_2$-equivariant case. 
An important property of the functor $\KH \colon \Mack(\K) \to \Sp^\K$ is that
it sends short exact sequences of $\K$-Mackey functors to cofiber sequences of $\K$-spectra. 

The Eilenberg--Mac Lane spectrum functor also commutes with several change of group functors. {For $H$ a subgroup of $\K$,} let $\uparrow_H^\K$ and $\downarrow^\K_H$ denote the induction and restriction functors, respectively, either between $\K$- and $H$-Mackey functors or between $\K$- and $H$-spectra. 

\begin{proposition}
	Let $\ul{M}$ be a $\K$-Mackey functor and $\ul{N}$ an $H$-Mackey functor. Then 
	\begin{enumerate}[(a)]
		\item $\downarrow_H^\K\! \KH(\ul{M}) \simeq \CH\left(\downarrow_H^\K\!\ul{M}\right)$
		\item $\uparrow_H^\K\! \CH(\ul{N}) \simeq \KH\left(\uparrow_H^\K\!\ul{N}\right)$.
	\end{enumerate}
\end{proposition}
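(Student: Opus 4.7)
The plan is to show both statements by appealing to the characterization of Eilenberg--Mac Lane spectra by their homotopy Mackey functors: if $E$ is a $G$-spectrum with $\upi_n E = 0$ for $n\neq 0$ and $\upi_0 E \cong \ul{M}$, then $E \simeq \bH_G \ul{M}$. So for each statement, I would (i) verify that the change-of-group functor on spectra takes an Eilenberg--Mac Lane spectrum to something with homotopy concentrated in degree zero, and (ii) identify the $\upi_0$ with the claimed Mackey functor. Both come down to checking that the change-of-group functor on spectra intertwines with the corresponding change-of-group functor on Mackey functors at the level of $\upi_n$.

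For part (a), restriction $\downarrow^\K_H$ on spectra is computed by regarding a $\K$-spectrum as an $H$-spectrum via the inclusion $H \hookrightarrow \K$. Since $\upi_n^G(E)(G/K) = \pi_n(E^K)$ for any subgroup $K \leq G$, and fixed points are preserved under restriction along $H \hookrightarrow \K$, we get
\[
	\upi_n^H(\downarrow^\K_H E)(H/K) \;=\; \pi_n(E^K) \;=\; \upi_n^\K(E)(\K/K)
\]
for each $K \leq H$, and these identifications are compatible with restrictions and transfers. So $\upi_n^H(\downarrow^\K_H E) \cong \downarrow^\K_H \upi_n^\K(E)$ as $H$-Mackey functors. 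Applying this to $E = \KH\ul{M}$ gives the desired result.

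For part (b), induction $\uparrow_H^\K$ on spectra is $\K_+ \wedge_H (-)$, which is both left and right adjoint to $\downarrow^\K_H$ in the finite-group setting (Wirthmüller). Using the adjunction and the double coset formula,
\[
	\upi_n^\K(\uparrow_H^\K E)(\K/K) \;=\; [(\K/K)_+ \wedge S^n,\, \uparrow_H^\K E]^\K \;=\; [\downarrow^\K_H\!(\K/K)_+ \wedge S^n,\, E]^H \;\cong\; \bigoplus_{HgK} \upi_n^H(E)\bigl(H/(H\cap gKg^{-1})\bigr),
\]
which is precisely $(\uparrow_H^\K \upi_n^H E)(\K/K)$. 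Once again this identification is natural with respect to restrictions and transfers, so $\upi_n^\K(\uparrow_H^\K E) \cong \uparrow_H^\K \upi_n^H E$. Taking $E = \CH \ul{N}$ concentrates the homotopy in degree zero with $\upi_0 = \uparrow_H^\K \ul{N}$, and the characterization of Eilenberg--Mac Lane spectra yields (b).

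The main obstacle is the Mackey-functor-level identification in part (b): one must verify that the spectrum-level induction $\K_+ \wedge_H (-)$ matches the Mackey-functor-level induction on $\upi_0$, including compatibility with the Mackey structure (restrictions, transfers, and Weyl actions). This is where the double coset formula and the self-duality of induction for finite groups (Wirthm\"uller) do the real work. Part (a), by contrast, is essentially immediate from the definition of homotopy Mackey functors in terms of fixed points.
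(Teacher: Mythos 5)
Your argument is correct, and it is the standard one: the paper states this proposition without proof, implicitly relying on exactly the facts you spell out, namely that $\upi_n$ intertwines the spectrum-level and Mackey-functor-level change-of-group functors together with the characterization of Eilenberg--Mac~Lane spectra by their homotopy Mackey functors. The only point requiring genuine care is the one you correctly identify in (b) --- that induction of spectra is also right adjoint to restriction (Wirthm\"uller), so that $\upi_n^\K(\uparrow_H^\K E)(\K/K)$ can be computed by restricting the orbit and applying the double coset formula --- and your treatment of it is fine.
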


{
We include a proof of this well-known result as we were not able to find a reference in the literature.
}

\begin{pf}
{
Consider the diagram of adjunctions
}
\[
\begin{tikzcd}
\Sp^{\K}_{\geq 0}
\ar[d,"\downarrow^\K_H" swap, xshift={-.5ex}]
\ar[r,"\upi_0",yshift={.5ex}]
& 
\Mack(\K)
\ar[l,"\KH",yshift={-.5ex}]
\ar[d,"\downarrow^\K_H" swap, xshift={-.5ex}]
\\
\Sp^H_{\geq 0}
\ar[u,"\uparrow^\K_H" swap, xshift={.5ex}]
\ar[r,"\upi_0",yshift={.5ex}]
& 
\Mack(H).
\ar[l,"\CH",yshift={-.5ex}]
\ar[u,"\uparrow^\K_H" swap, xshift={.5ex}]
\end{tikzcd}
\]
{
We claim that the diagram of left adjoints commutes. Namely, we claim that for every (connective) $\K$-spectrum $X$, we have a natural isomorphism of $H$-Mackey functors $\upi_0 \! \downarrow^\K_H \!  X \iso \, \downarrow^\K_H \! \upi_0 X$. Indeed, we have isomorphisms, natural in $H$-sets $T$,
\[
\begin{split}
\upi_0 (\downarrow^\K_H \!  X) (T) \iso [ T_+, \downarrow^\K_H \! X]^H
\iso [ (\K\times_H T)_+, X]^\K \iso \, \downarrow^\K_H \! (\upi_0 X) (T).
\end{split}
\]
It follows that the diagram of right adjoints commutes, which is item (b).

On the other hand, both vertical adjunctions are ambidextrous. Thus the isomorphism (a) would follow from an isomorphism of $\K$-Mackey functors $\upi_0  \! \uparrow^\K_H \! Y \cong \, \uparrow^\K_H \! \upi_0 Y$, natural in $H$-spectra $Y$. This is provided by the isomorphism, natural in $\K$-sets $W$,
\[
\upi_0 ( \uparrow^\K_H \! Y) (W) = [W_+, \uparrow_H^\K Y]^\K
\cong [ \downarrow^\K_H W_+, Y]^H \cong \ \uparrow_H^\K \!(\upi_0 Y) (W).
\]
Since the diagram of left adjoints commutes, so does the diagram of right adjoints. This establishes (a).
}
\end{pf}

However, taking the Eilenberg--Mac Lane spectrum does not commute with norms. 
{For example, $N_e^{C_2}H\F_2$ is not Eilenberg--Mac~Lane, as its underlying spectrum is $H\F_2\wedge H\F_2$, whose homotopy groups are the dual Steenrod algebra. On the other hand, for $X$ a connective $H$-spectrum, there is an isomorphism $\upi_0 N_H^\K X \cong N_H^\K \upi_0 X$ \cite{U}*{Corollary~3.2}.
}

\subsection{\texorpdfstring{$\RO(\K)$}{RO(K)}-Graded Suspensions}

The abelian group $\RO(\K)$ is a free abelian group of rank 4 generated by the trivial representation $1$ and three one-dimensional representations $\sigma_L$, $\sigma_D$, and $\sigma_R$. For a subgroup $H \in \{L,D,R\}$ of $\K$, the representation $\sigma_H$ corresponds to the sign representation $\sigma$ of $\K/H\iso C_2$, where $\K$ acts 
via the quotient homomorphism $\K \to \K/H$. We also write $\rho = 1 + \sigma_L + \sigma_D + \sigma_R$ for the regular representation of $\K$ and $\orho = \rho - 1$ for the reduced regular representation.

Because we are interested in the $\RO(\K)$-graded homotopy of $\K$-spectra, we will frequently suspend by (virtual) representation spheres $S^V$ for $V \in \RO(\K)$. We are primarily interested in $\orho$-suspensions of $\K$-spectra, but we will also find occasion to use $\sigma_H$-suspensions.  
The suspension $\Sigma^{\sigma_H}$ fits into a cofiber sequence 
\[
	\K/H_+ \wedge X \to X \to \Sigma^{\sigma_H} X.
\]
The fiber has an alternative description, by the shearing isomorphism:

\begin{proposition}[{Shearing}] Let $X$ be a $\K$-spectrum and let 
$H$ be a subgroup of $\K$.
Then 
\[	
	\K/H_+ \wedge X \simeq\, \uparrow^\K_H\downarrow^\K_H\! X,
\]
where $\downarrow^\K_H$ denotes the restriction from $\K$-spectra to $H$-spectra, and $\uparrow_H^\K$ the induction.
\end{proposition}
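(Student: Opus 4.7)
The plan is to exhibit the equivalence via an explicit shearing isomorphism at the point-set level, following the standard model $\uparrow_H^\K Y \simeq \K_+ \wedge_H Y$, where $H$ acts on $\K_+$ by right translation and the outer $\K$-action comes from left multiplication on $\K_+$.

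The main construction is the map
\[
\Phi \colon \K_+ \wedge X \longrightarrow \K_+ \wedge X, \qquad g \wedge x \longmapsto g \wedge g^{-1} x,
\]
which is an isomorphism with inverse $g \wedge x \mapsto g \wedge g x$. The two copies of $\K_+ \wedge X$ are endowed with \emph{different} $(\K, H)$-biactions. On the source, I would take the diagonal $\K$-action on both factors and let $H$ act only by right translation on $\K_+$. On the target, $\K$ acts only on $\K_+$, while $H$ acts diagonally (right translation on $\K_+$ together with the restricted $H$-action on $X$). A direct check shows $\Phi$ intertwines these: the identity $\Phi(kg \wedge kx) = kg \wedge g^{-1} x$ matches the $\K$-actions, and $\Phi(gh \wedge x) = gh \wedge h^{-1}g^{-1}x$ matches the $H$-actions.

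Next I would pass to $H$-orbits on both sides. The source becomes $(\K_+/H) \wedge X \simeq \K/H_+ \wedge X$, since the $H$-action on the $X$-factor is trivial. The target becomes $\K_+ \wedge_H (\downarrow_H^\K X) \simeq \uparrow_H^\K \downarrow_H^\K X$ by the definition of induction. The residual $\K$-action on each side is precisely the standard one, so $\Phi$ descends to the desired $\K$-equivariant equivalence. The main obstacle is really just careful bookkeeping of the various left and right actions; once these are aligned, the shearing map does all the work, and the argument carries over verbatim from based $\K$-spaces to $\K$-spectra in any symmetric monoidal model.
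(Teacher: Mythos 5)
Your argument is correct, and in fact the paper states this proposition as a standard fact without giving any proof, so there is nothing to compare against: what you have written is the classical shearing (or ``untwisting'') isomorphism, and your bookkeeping of the two $(\K,H)$-biactions and the verification that $g\wedge x\mapsto g\wedge g^{-1}x$ intertwines them is exactly right. The only point worth making explicit is the last sentence: for genuine $\K$-spectra one should note that the point-set isomorphism $\K_+\wedge_H(\downarrow^\K_H X)\cong \K/H_+\wedge X$ really does model the derived statement, which is unproblematic here because $\K$ is finite, so both $\K/H_+\wedge(-)$ and induction $\K_+\wedge_H(-)$ preserve all weak equivalences (the latter being both a left and a right adjoint to restriction by the Wirthm\"uller isomorphism); with that remark added, the proof is complete.
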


In particular, when $X$ is an Eilenberg--Mac Lane spectrum, we have 
\[
	\K/H_+ \wedge \KH \ul{M} \simeq
	\uparrow^\K_H\downarrow^\K_H \KH \ul{M}
	\simeq
	 \KH\left(\uparrow^\K_H\downarrow^\K_H\!\ul{M}\right).
\]
A useful consequence of this is the following:

\begin{corollary}
\label{cor:sigmaH suspensions of trivial H}
	Let $X$ be a $\K$-spectrum and let $H \in \{L,D,R\}$. If the restriction of $X$ to $H$ is contractible, then 
	\[
		\Sigma^{\sigma_H} X \simeq X.
	\]
\end{corollary}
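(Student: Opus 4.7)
The plan is to read the conclusion directly off the cofiber sequence
\[
\K/H_+ \wedge X \to X \to \Sigma^{\sigma_H} X
\]
stated just above the corollary. If I can show that the first term is contractible, then the second map is an equivalence, which is exactly what I want.

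To see that $\K/H_+ \wedge X$ is contractible, I would apply the shearing isomorphism recalled in the preceding proposition, which identifies
\[
\K/H_+ \wedge X \simeq\, \uparrow^\K_H \downarrow^\K_H\! X.
\]
By hypothesis, $\downarrow^\K_H X$ is contractible. Since $\uparrow^\K_H$ is an exact functor (being both a left and right adjoint to the restriction $\downarrow^\K_H$), it preserves the zero object, so $\uparrow^\K_H \downarrow^\K_H X$ is contractible as well.

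Plugging a contractible object into the left of the cofiber sequence yields an equivalence $X \xrightarrow{\simeq} \Sigma^{\sigma_H} X$, finishing the proof. There is no real obstacle here; this is a short formal consequence of shearing together with the defining cofiber sequence for $\Sigma^{\sigma_H}$.
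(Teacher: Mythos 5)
Your argument is correct and is exactly the one the paper intends: the corollary is stated immediately after the cofiber sequence $\K/H_+ \wedge X \to X \to \Sigma^{\sigma_H} X$ and the shearing identification $\K/H_+ \wedge X \simeq\, \uparrow^\K_H\downarrow^\K_H\! X$ precisely so that contractibility of $\downarrow^\K_H X$ kills the fiber and makes $X \to \Sigma^{\sigma_H} X$ an equivalence. The paper leaves this as an immediate consequence without a written proof, and your write-up supplies that same short formal argument.
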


\subsection{Inflation Functors}
Our main computations are $\K$-equivariant. However, we often make comparisons to $C_2$-equivariant computations, as $C_2$ appears both as a subgroup and a quotient of $\K$. For reference, we display in \cref{fig:piC2F2} and \cref{fig:piC2NF2} the $RO(C_2)$-graded homotopy Mackey functors of $\CH \ulF$ and $\CH N_e^{C_2}\F_2$, respectively. The Mackey functors appearing in those charts are as shown in \cref{tab-C2Mackey}.

We follow \cite{Slice}*{Section~4} in writing $\phi^*_H \ul{M}$ for the inflation along the quotient $\K \to \K/H$ 
of the $\K/H$-Mackey functor $\ul{M}$, for any subgroup $H \leq \K$. 
Since the groups $\K/H$ are canonically isomorphic for all $H$ in $\{L,D,R\}$, we write $\phi^*_{LDR}\ul{M}$ as shorthand for the sum $\phi^*_L \ul{M} \oplus \phi^*_D \ul{M} \oplus \phi^*_R \ul{M}$ for any $C_2$-Mackey functor $\ul{M}$. See \cref{figure:inflations}.
As in \cite{GY}, we write $\ulg$ for the fully inflated Mackey functor $\phi^*_{\K}\F$ (see \cref{tab-K4Mackey}). 

\begin{table}
	\caption{The inflation functor $\phi_H^* \colon \Mack(C_2) \to \Mack(\K)$, where we identify $C_2 \cong \K/H$. We also write $\phi^*_{LDR} \ul M := \phi_L^* \ul M \oplus \phi_D^* \ul M \oplus \phi_R^* \ul M$.}
	\label{figure:inflations}
	\begin{center}
		\begin{tabular}{|c|c|c|c|}
		\hline
		$\ul M$ & $\phi_L^* \ul M$ & $\phi_D^* \ul M$ & $\phi_R^* \ul M$ 
		\\
		\hline
		\begin{tikzpicture}
			\node (Mtop) at (0,1.5) {$\ul M(C_2/C_2)$};
			\node (Mbot) at (0,0) {$\ul M(C_2/e)$};
			\node at (0,-0.75) {\phantom{x}};
			\draw[->] (Mtop) --node[left]{\footnotesize $\res$} (Mbot); 
			\draw[->,\inductioncolor] (Mbot) to[bend right=30] node[right]{\footnotesize $\tr$} (Mtop);
		\end{tikzpicture}
		&
		\begin{tikzpicture}
			\node (K) at (0,1.5) {$\ul M(C_2/C_2)$};
			\node (L) at (-1.5,0) {$\ul M(C_2/e)$};
			\node (D) at (0,0) {$0$};
			\node (R) at (1.5,0) {$0$};
			\node (e) at (0,-1.5) {$0$};
			\draw[->] (K) --node[left]{\footnotesize $\res$} (L); 
			\draw[->,\inductioncolor] (L) to[bend right=30] node[right]{\footnotesize $\tr$} (K);
		\end{tikzpicture}
		&
		\begin{tikzpicture}
			\node (K) at (0,1.5) {$\ul M(C_2/C_2)$};
			\node (L) at (-1.5,0) {$0$};
			\node (D) at (0,0) {$\ul M(C_2/e)$};
			\node (R) at (1.5,0) {$0$};
			\node (e) at (0,-1.5) {$0$};
			\draw[->] (K) --node[left]{\footnotesize $\res$} (D); 
			\draw[->,\inductioncolor] (D) to[bend right=30] node[right]{\footnotesize $\tr$} (K);
		\end{tikzpicture}
		&
		\begin{tikzpicture}
			\node (K) at (0,1.5) {$\ul M(C_2/C_2)$};
			\node (L) at (-1.5,0) {$0$};
			\node (D) at (0,0) {$0$};
			\node (R) at (1.5,0) {$\ul M(C_2/e)$};;
			\node (e) at (0,-1.5) {$0$};
			\draw[->] (K) --node[left]{\footnotesize $\res$} (R); 
			\draw[->,\inductioncolor] (R) to[bend right=30] node[right]{\footnotesize $\tr$} (K);
		\end{tikzpicture}
		\\
		\hline
		\end{tabular}
	\end{center}
\end{table}

The inflation functor $\phi_H^*$ in fact extends to {a ``geometric inflation'' functor on} spectra, and $\phi_H^* \CH \ul{M} \simeq \KH \phi_H^* \ul{M}$. 
In fact, by considering the Postnikov tower for any $\K/H$-spectrum $X$, one gets more generally an isomorphism of $\K$-Mackey functors
\begin{equation}
\label{eq:inflatinghomotopy}
	\upi_n \phi_H^* X \iso  \phi_H^* \upi_n X.
\end{equation}
We will use this frequently.
More importantly, for any $\K$-representation $V$, we have \cite{Slice}*{Corollary~4.6}
\begin{equation}
\label{eq:inflatingsuspensions}
\Sigma^V \KH \phi_H^* \ul{M} \simeq \phi_H^* \left( \Sigma^{V^H} {\CH} \ul{M} \right)
\end{equation}
where $V^H$ is considered as a representation of $\K/H$.

This, combined with \cref{fig:piC2F2}, gives the following useful equivalences:

\begin{prop}
\label{BackgroundSrhoHM}
We have equivalences
\begin{enumerate}[(a)]
\item $\Sigma^{\orho} \KH \ulg \simeq \KH \ulg$
\item $\Sigma^{\orho} \KH \phi_H^* \ulF^* \simeq \Sigma^1 \KH \phi_H^*\ulf$ for $H\in \{L,D,R\}$
\item $\Sigma^{\orho} \KH \phi_H^* \ulf \simeq \Sigma^1 \KH \phi_H^*\ulF$ for $H\in \{L,D,R\}$.
\end{enumerate}
where $\ulf$ is as displayed in \cref{tab-C2Mackey}. 
\end{prop}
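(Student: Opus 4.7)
The plan is to apply the inflation--suspension formula \eqref{eq:inflatingsuspensions}, namely $\Sigma^V \KH \phi_H^* \ul{M} \simeq \phi_H^*(\Sigma^{V^H} \CH \ul{M})$ (interpreted with $\CH$ replaced by ordinary $H$ when $H = \K$), in each case. This reduces the $\K$-equivariant statement for $V = \overline{\rho}$ to a computation of the fixed-point subrepresentation $\overline{\rho}^H$, followed by a lower-equivariant suspension identity.

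First I would compute $\overline{\rho}^H$ for each relevant $H$. Since $\overline{\rho} = \sigma_L + \sigma_D + \sigma_R$ and each $\sigma_J$ is a nontrivial one-dimensional $\K$-representation, the summands have no $\K$-invariants, so $\overline{\rho}^\K = 0$. For $H \in \{L,D,R\}$, the representation $\sigma_H$ has $H$ acting trivially (the action factors through $\K/H$), so $\sigma_H^H \cong \sigma_H$, which as a $\K/H \cong C_2$-representation is the sign representation $\sigma$. For $J \neq H$, the subgroup $H$ maps to the nontrivial coset of $\K/J$, so $H$ acts as $-1$ on $\sigma_J$ and $\sigma_J^H = 0$. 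Hence $\overline{\rho}^H \cong \sigma$ as a $C_2$-representation.

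Part (a) is now immediate: using $\ulg = \phi_\K^* \F$ together with $\overline{\rho}^\K = 0$, formula \eqref{eq:inflatingsuspensions} yields $\Sigma^{\overline{\rho}} \KH \ulg \simeq \phi_\K^*(\Sigma^0 \eH \F) = \KH \ulg$. For parts (b) and (c), applying \eqref{eq:inflatingsuspensions} and $\overline{\rho}^H \cong \sigma$ gives
\[
\Sigma^{\overline{\rho}} \KH \phi_H^* \ul{M} \simeq \phi_H^* \bigl(\Sigma^\sigma \CH \ul{M}\bigr)
\]
for $\ul{M} \in \{\ulF^*, \ulf\}$, so the claims reduce to the $C_2$-equivariant equivalences $\Sigma^\sigma \CH \ulF^* \simeq \Sigma^1 \CH \ulf$ and $\Sigma^\sigma \CH \ulf \simeq \Sigma^1 \CH \ulF$. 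Then applying $\phi_H^*$ (and using $\phi_H^* \Sigma^1 \CH \ul{N} \simeq \Sigma^1 \KH \phi_H^* \ul{N}$ for trivial-representation suspensions) gives the stated results.

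The remaining work is to verify the two $C_2$-equivariant suspension identities. These can be read off directly from the Dugger--Sikora computation of $\pi_\bigstar^{C_2} \CH \ulF$ pictured in \cref{fig:piC2F2}: the positions of $\ulF$, $\ulf$, and $\ulF^*$ in the chart differ exactly by the predicted $(1,-\sigma)$ shifts. Alternatively, one can construct these equivalences from short exact sequences of $C_2$-Mackey functors (for example, $0 \to \ulf \to \ulF \to \phi^*\F \to 0$ and its dual) and the cofiber sequences of Eilenberg--Mac~Lane spectra they produce, using $\sigma$-cofiber sequences and \cref{cor:sigmaH suspensions of trivial H} to collapse terms whose restriction to $e$ is contractible. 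The only real obstacle is the bookkeeping for this last step; once the correct short exact sequences are identified, each equivalence is formal.
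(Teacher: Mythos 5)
Your argument is correct and is essentially the paper's own: the proposition is stated as an immediate consequence of \eqref{eq:inflatingsuspensions} together with \cref{fig:piC2F2}, which is exactly your reduction via $\orho^\K=0$ and $\orho^H\cong\sigma$ followed by the $C_2$-equivariant identities $\Sigma^{\sigma}\CH\ulF^*\simeq\Sigma^1\CH\ulf$ and $\Sigma^{\sigma}\CH\ulf\simeq\Sigma^1\CH\ulF$. Your observation that these $C_2$-level equivalences of spectra (not merely of homotopy groups) follow because the relevant suspensions have homotopy concentrated in a single degree is the right justification for "reading them off" the chart.
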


\section{The Homotopy of \texorpdfstring{$\KH N_e^\K \F_2$}{H N_e^K F_2}}
\label{sec:HNeF}

In this section, we compute the homotopy Mackey functors {$\upi_{\blacklozenge} \KHN$, where $\KN$ will often be used as an abbreviation for $ N_e^\K \bbF_2$.} The results are displayed in \cref{fig:HN}.

\begin{remark}
For $k\geq 0$, the homology groups $\ul{H}_n(S^{k\orho};\KN) \iso \upi_{n-k\orho}\KHN$ are concentrated in degrees $n\geq 0$. We will refer to this portion of $\upi_{\blacklozenge} \KHN$ as the \textbf{positive cone}. It appears in the fourth quadrant of \cref{fig:HN}. Similarly, the cohomology groups $\ul{H}^n(S^{k\orho};\KN) \iso \pi_{-n+k\orho} \KHN$ are concentrated in degrees $n\geq 0$. We will refer to this portion as the \textbf{negative cone}. It appears in the second quadrant of \cref{fig:HN}.
\end{remark}

We proceed with a computation of the positive cone for $\KHN$ followed by that of the negative cone.

\subsection{The Positive Cone of \texorpdfstring{$\KH N_e^{\K} \bbF_2$}{$HN$}}
\label{sec:PinkposorhoHN}

Here we compute the homotopy Mackey functors of the $\K$-spectra $\Sigma^{k\orho} \KHN$ for $k\geq 0$.

We start with the case $k=1$.
Our analysis will rely on cofiber sequences of equivariant Eilenberg--Mac~Lane spectra arising from short exact sequences of Mackey functors. In particular, the following Mackey functor $\ul{E}$ will be of use.

\noindent
\begin{minipage}{0.5\textwidth}
\begin{defn} 
Let $\ul{E}$ be the cokernel of $\ulF^* \into N_e^{\K} \bbF_2$. This Mackey functor is displayed to the right. 
\end{defn}
\end{minipage}
\quad
\begin{tikzcd}[row sep={7ex}, column sep={4ex},ampersand replacement=\&]
 \& \Z/4\oplus \Z/2\{{\genUL},{\genUR}\} \ar[dl,"\mymidsizematrix{1 & 0 &  0}" swap,pos=0.4,shift right, bend right=5] \ar[d,"\mymidsizematrix{1 & 0 &  0}" swap,pos=0.5,shift right=0.5] \ar[dr,"\mymidsizematrix{1 & 0 &  0}" swap,pos=0.8,shift right=0.5, bend left=5]\\ 
 \Z/2 \ar[ur,"\mymidsizematrix{2 \\ 1 \\ 0}" swap, pos=0.3, bend left=5,shift right=0.5] \& \Z/2 \ar[u,"\mymidsizematrix{2 \\ 1 \\ 1}" swap, pos=0.3,shift right=0.5]  \& \Z/2 \ar[ul,"\mymidsizematrix{2 \\ 0 \\ 1}" swap, pos=0.3,bend right=5,shift right] \\ 
 \& 0
\end{tikzcd}
\\

The Mackey functor $\ul{E}$ also fits into a short exact sequence 
\begin{equation}
\label{ses:E}
	\phi^*_{LDR} (\ulF^*) \into \ul{E} \onto \ulg.
\end{equation}

\begin{lemma}
\label{lemma:homotopy of E}
The nonzero homotopy Mackey functors of $\Sigma^{\orho}\KH\ul{E}$ are 
\[
	\ul\pi_n \Sigma^{\orho} \KH \ul{E} = 
	\begin{cases}
		\ul{g} & n = 0\\
		\phi^*_{LDR}\ul{f} & n = 1
	\end{cases}
\]
\end{lemma}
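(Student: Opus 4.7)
The plan is to apply the functor $\Sigma^{\orho} \KH(-)$ to the short exact sequence \eqref{ses:E}. Since $\KH(-)\colon \Mack(\K) \to \Sp^\K$ sends short exact sequences of Mackey functors to cofiber sequences of $\K$-spectra, we obtain a cofiber sequence
\[
\Sigma^{\orho} \KH \phi^*_{LDR}\ulF^* \to \Sigma^{\orho} \KH \ul{E} \to \Sigma^{\orho} \KH \ulg.
\]

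Next I would identify the outer terms using \cref{BackgroundSrhoHM}. By part (a), $\Sigma^{\orho} \KH \ulg \simeq \KH \ulg$, whose homotopy Mackey functors are concentrated in degree $0$, where they equal $\ulg$. For the left term, since $\phi^*_{LDR}\ulF^* = \phi_L^*\ulF^* \oplus \phi_D^*\ulF^* \oplus \phi_R^*\ulF^*$, the corresponding Eilenberg--Mac~Lane spectrum splits as a wedge, and part (b) gives
\[
\Sigma^{\orho} \KH \phi^*_{LDR}\ulF^* \simeq \Sigma^1 \KH \phi^*_{LDR}\ulf,
\]
whose homotopy Mackey functors are concentrated in degree $1$, where they equal $\phi^*_{LDR}\ulf$.

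Finally I would extract the result from the long exact sequence of homotopy Mackey functors associated to the above cofiber sequence. Because the homotopy of the outer terms is supported in the disjoint degrees $0$ and $1$, every connecting map in the long exact sequence is between a zero group and another group, so the sequence degenerates into isomorphisms
\[
\upi_0 \Sigma^{\orho}\KH \ul{E} \iso \ulg, \qquad \upi_1 \Sigma^{\orho}\KH \ul{E} \iso \phi^*_{LDR}\ulf,
\]
with all other homotopy Mackey functors zero. There is no genuine obstacle here: the argument is a direct application of \cref{BackgroundSrhoHM} together with the long exact sequence, and there are no hidden extension problems precisely because the supports of the outer terms' homotopy do not overlap.
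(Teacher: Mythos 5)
Your proposal is correct and follows exactly the paper's argument: apply $\Sigma^{\orho}\KH(-)$ to the sequence \eqref{ses:E}, identify the outer terms via \cref{BackgroundSrhoHM}, and read off the homotopy from the long exact sequence. The added observation that the outer terms' homotopy lives in disjoint degrees, so the long exact sequence degenerates with no extension issues, is a correct and slightly more explicit justification of the paper's final step.
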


\begin{proof}
	Applying the functors $\Sigma^{\orho}$ and $\KH$ to the short exact sequence \eqref{ses:E} yields a cofiber sequence:
	\[
	\Sigma^{\orho} \KH \phi_{LDR}^* \ulF^* \to \Sigma^{\orho} \KH \ul{E} \to \Sigma^{\orho} \KH \ulg.
\]
But {\cref{BackgroundSrhoHM} provides equivalences } $\Sigma^{\orho} \KH \phi_{LDR}^* \ulF^* \simeq\Sigma^1 \KH \phi_{LDR}^* \ul{f}$
and $\Sigma^{\orho} \KH \ulg \simeq \KH \ulg$,
so the homotopy of $\Sigma^{\orho} \KH \ul{E}$ can be read off from the associated long exact sequence.
\end{proof}

\begin{prop}
\label{prop:PiSiRhoKHN}
The nonzero homotopy Mackey functors of $\Sigma^{\orho}  \KH N_e^{\K} \bbF_2$ are
\[
\ul\pi_n \Sigma^{\orho}  \KH N_e^{\K} \bbF_2 \iso
\begin{cases}
\ul{\bbF}_2 & n=3 \\
\phi_{LDR}^* \ul{f} & n=1 \\
\ul{g} & n=0.
\end{cases}
\]
\end{prop}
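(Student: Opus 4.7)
The plan is to apply the exact functor $\Sigma^{\orho}\KH(-)$ to the defining short exact sequence
\[\ulF^{*} \hookrightarrow \KN \twoheadrightarrow \ul{E}\]
and exploit the resulting cofiber sequence of $\K$-spectra
\[\Sigma^{\orho} \KH \ulF^{*} \to \Sigma^{\orho} \KHN \to \Sigma^{\orho} \KH \ul{E}.\]
Its associated long exact sequence of homotopy Mackey functors will compute $\upi_{*}\Sigma^{\orho}\KHN$ once the homotopy of the two ends is in hand. The right-hand term is provided by \cref{lemma:homotopy of E}, which gives $\ulg$ in degree $0$, $\phi_{LDR}^{*}\ulf$ in degree $1$, and zero otherwise, so the remaining task is to pin down $\upi_{*}\Sigma^{\orho}\KH\ulF^{*}$.

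To analyse $\Sigma^{\orho}\KH\ulF^{*}$, my plan is to filter $\ulF^{*}$ by a short exact sequence (or a short tower of them) whose other pieces are of the form $\ulg$, $\phi_{H}^{*}\ulF$, $\phi_{H}^{*}\ulf$, or $\phi_{H}^{*}\ulF^{*}$, since the $\orho$-suspensions of their Eilenberg--Mac~Lane spectra are directly controlled by \cref{BackgroundSrhoHM} together with the inflation formula \cref{eq:inflatingsuspensions}. Because $\orho$ has virtual dimension $3$ and the target homotopy in the proposition is expected to be concentrated in one degree, I anticipate that $\upi_{*}\Sigma^{\orho}\KH\ulF^{*}$ is the Mackey functor $\ulF$ placed in degree $3$, reflecting a duality-style identification $\Sigma^{\orho}\KH\ulF^{*} \simeq \Sigma^{3}\KH\ulF$.

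Given those two inputs, the long exact sequence
\[\cdots \to \upi_{n}\Sigma^{\orho}\KH\ulF^{*} \to \upi_{n}\Sigma^{\orho}\KHN \to \upi_{n}\Sigma^{\orho}\KH\ul{E} \to \upi_{n-1}\Sigma^{\orho}\KH\ulF^{*} \to \cdots\]
degenerates into isomorphisms in every degree, because the nonzero contributions from the two ends live in disjoint sets of degrees: $\{0,1\}$ on the right and $\{3\}$ on the left. This immediately produces $\upi_{0} \cong \ulg$, $\upi_{1} \cong \phi_{LDR}^{*}\ulf$, $\upi_{3} \cong \ulF$, and zeros elsewhere, matching the claim.

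The main obstacle I expect is the computation of $\upi_{*}\Sigma^{\orho}\KH\ulF^{*}$. The right filtration of $\ulF^{*}$ is not forced upon us, and several candidate short exact sequences may need to be tried; moreover, once a filtration is fixed, one must verify that the connecting maps in the corresponding long exact sequences behave as predicted so that the spectrum truly has only one nontrivial homotopy Mackey functor, located in degree $3$. Once that identification is secured, the remainder of the argument is formal bookkeeping in the long exact sequence arising from $\ulF^{*} \hookrightarrow \KN \twoheadrightarrow \ul{E}$.
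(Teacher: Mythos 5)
Your proposal is correct and follows essentially the same route as the paper: the same cofiber sequence $\Sigma^{\orho}\KH\ulF^{*}\to\Sigma^{\orho}\KHN\to\Sigma^{\orho}\KH\ul{E}$, the same input from \cref{lemma:homotopy of E}, and the same degree-disjointness argument in the long exact sequence. The one input you leave as "anticipated," namely $\Sigma^{\orho}\KH\ulF^{*}\simeq\Sigma^{3}\KH\ulF$, is exactly what the paper supplies by citing \cite{GY}*{Proposition~4.2}, so no new filtration argument is needed.
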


\begin{pf}
By {applying the functors $\Sigma^{\orho}$ and $\KH$ to the defining short exact sequence for} $\ul{E}$, we have a cofiber sequence 
\[
	\Sigma^{\orho}\KH \ulF^* \to \Sigma^{\orho}\KHN \to \Sigma^{\orho}\KH \ul{E}.
\]
{We understand the homotopy of the fiber:} $\Sigma^{\orho} \KH \ulF^*$ is $\Sigma^3 \KH \ulF$ by \cite{GY}*{Proposition~4.2}. {We also understand the homotopy of the cofiber by \cref{lemma:homotopy of E}. The desired homotopy may then be read off from the associated long exact sequence.}
\end{pf}

The computation of the homotopy of $\Sigma^{\orho} \KHN$ gives a fiber sequence
\begin{equation}
\label{eq:Separate_korhoKHN}
	\Sigma^3 \KH \ulF \to \Sigma^{\orho} \KHN \to \Sigma^{\orho} \KH \ul{E}.
\end{equation}
It turns out that the $(k-1)\orho$-suspension of the map $\Sigma^3 \KH \ulF \to \Sigma^{\orho} \KHN $ detects much of the homotopy of $\Sigma^{k\orho} \KHN $, as follows from analysis of the suspensions of the cofiber.

\begin{prop}
\label{prop:PInP01}
For $k\geq 2$, 
the nonzero homotopy Mackey functors of $\Sigma^{k\orho}  \KH \ul{E}$ are
\[
\ul\pi_n \Sigma^{k\orho}  \KH \ul{E} \iso
\begin{cases}
\phi_{LDR}^* \ulF & n=k \\
\ul{g}^3 & n \in [2,k-1] \\
\ul{g} & n=0.
\end{cases}
\]
\end{prop}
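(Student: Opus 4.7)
The plan is to apply $\Sigma^{k\orho}\KH(-)$ to the defining short exact sequence \eqref{ses:E} of $\ul{E}$, which produces a cofiber sequence
\[
	\Sigma^{k\orho} \KH \phi^*_{LDR} \ulF^* \to \Sigma^{k\orho} \KH \ul{E} \to \KH\ulg,
\]
where the right-hand term has been simplified using \cref{BackgroundSrhoHM}(a). The cofiber contributes the $\ulg$ in degree $0$, so the real content is computing the homotopy of the fiber $\Sigma^{k\orho}\KH \phi^*_{LDR}\ulF^*$.

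I would analyze this one summand at a time. Using parts (b) and (c) of \cref{BackgroundSrhoHM} in succession yields
\[
	\Sigma^{k\orho} \KH \phi^*_H \ulF^* \simeq \Sigma^2 \Sigma^{(k-2)\orho} \KH \phi^*_H \ulF,
\]
so it suffices to understand $\Sigma^{j\orho} \KH \phi^*_H \ulF$ for $j \geq 0$. I would show by induction on $j$ that this spectrum has nonzero homotopy only in degrees $0,1,\dots,j$, with $\phi^*_H \ulF$ in degree $j$ and $\ulg$ in each of the degrees $0,\dots,j-1$. For the inductive step, apply $\Sigma^{(j+1)\orho}\KH(-)$ to the short exact sequence $\phi^*_H \ulf \hookrightarrow \phi^*_H \ulF \twoheadrightarrow \ulg$ of $\K$-Mackey functors (noting that $\phi^*_H \ulg = \ulg$ since $\ulg$ is fully inflated); invoking \cref{BackgroundSrhoHM}(a) and (c) on the outer terms produces
\[
	\Sigma \Sigma^{j\orho}\KH \phi^*_H \ulF \to \Sigma^{(j+1)\orho} \KH \phi^*_H \ulF \to \KH\ulg,
\]
and the long exact sequence, combined with the inductive hypothesis on the fiber, yields the desired description of the middle term.

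Assembling these ingredients, $\Sigma^{k\orho}\KH \phi^*_H \ulF^*$ has $\phi^*_H \ulF$ in degree $k$ and $\ulg$ in degrees $2,\dots,k-1$; summing over $H\in\{L,D,R\}$ gives $\phi^*_{LDR}\ulF$ in degree $k$ and $\ulg^3$ in degrees $2,\dots,k-1$. Feeding this into the long exact sequence of the main cofiber sequence produces the claimed homotopy Mackey functors for $\Sigma^{k\orho}\KH \ul{E}$. The principal work is the bookkeeping across the nested cofiber sequences, but this bookkeeping is benign: because $\KH\ulg$ has homotopy concentrated in a single degree, each relevant connecting homomorphism is forced to vanish for degree reasons, and every extension problem collapses to an isomorphism $0 \to A \to B \to 0$.
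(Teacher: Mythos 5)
Your argument is correct and follows essentially the same route as the paper: both reduce to the fiber sequence $\Sigma^{2+(k-2)\orho}\KH\phi_{LDR}^*\ulF \to \Sigma^{k\orho}\KH\ul{E} \to \KH\ulg$ and conclude by a degree-separation argument in the associated long exact sequence. The only differences are organizational: you reach that sequence directly from \eqref{ses:E} rather than by suspending the Postnikov sequence coming from \cref{lemma:homotopy of E}, and you re-derive the homotopy of $\Sigma^{j\orho}\KH\phi_H^*\ulF$ by an induction where the paper simply invokes \cref{eq:inflatingsuspensions} together with the known $C_2$-equivariant computation in \cref{fig:piC2F2}.
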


\begin{pf}
As in \cref{lemma:homotopy of E}, we have a fiber sequence
\[
	\Sigma^1 \KH \phi_{LDR}^*\ul{f} \to 
	\Sigma^{\orho} \KH \ul{E}
	\to \KH \ulg.
\]
Suspending this $(k-1)\orho$ times yields another fiber sequence 
\[
	\Sigma^{1 + (k-1)\orho}\KH \phi_{LDR}^*\ul{f} \to 
	\Sigma^{k\orho} \KH \ul{E}
	\to \Sigma^{(k-1)\orho}\KH \ulg.
\]
Applying the equivalences \[
	\Sigma^{\orho} \KH \ulg \simeq \KH \ulg
	\qquad \text{and} \qquad
	\Sigma^{\orho} \KH \phi_{LDR}^* \ul{f} \simeq \Sigma^1 \KH \phi_{LDR}^* \ulF
\]
from \cref{BackgroundSrhoHM}, we see that this fiber sequence is equivalent to 
\begin{equation}
\label{cofiber sequence for krho suspension}
	\Sigma^{2 + (k-2)\orho}\KH \phi_{LDR}^*\ulF \to 
	\Sigma^{k\orho} \KH \ul{E}
	\to \KH \ulg.
\end{equation}
Then by \zcref{eq:inflatingsuspensions}, the left term becomes 
\[
	\Sigma^{2 + (k-2)\orho}\KH \phi_{LDR}^*\ulF 
	\simeq 
	\bigvee_{H \in \{L,D,R\}} 
	\phi_{H}^* \left(\Sigma^{(2 + (k-2){\sigma})} \CH \ulF \right)
\]
{Here we have used the identification $\orho^H \iso \orho=\sigma$ for $H\in \{L,D,R\}$.}
We may compute the homotopy of the above using \zcref{eq:inflatinghomotopy} and the computation of the $C_2$-Mackey functors $\ul{\pi}_\blacklozenge^{C_2} \CH \ulF$, as in \cref{fig:piC2F2}. The result then follows from the long exact sequence associated to \zcref{cofiber sequence for krho suspension}.
\end{pf}

In the case $k=2$, the cofiber sequence \zcref{eq:Separate_korhoKHN} and \cref{prop:PInP01} give the following computation of $\Sigma^{2\orho} \KHN$.

\begin{cor}
The nontrivial homotopy Mackey functors of $\Sigma^{2\orho} \KH N_e^\K \F_2$ are
\[
\ul{\pi}_n \Sigma^{2\orho} \KH N_e^K \bbF_2 \iso
\begin{cases}
\ul{\pi}_{n-3} \Sigma^{\orho} \KH \ul{\bbF}_2 & n \geq 3 \\
\phi_{LDR}^* \ul{\bbF}_2  & n=2 \\
\ul{g} & n=0.
\end{cases}
\]
\end{cor}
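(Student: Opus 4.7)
My plan is to derive the computation by suspending the cofiber sequence \cref{eq:Separate_korhoKHN} once more and running the resulting long exact sequence, using \cref{prop:PInP01} (for $k=2$) to control the homotopy of the cofiber.

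First, I would apply $\Sigma^{\orho}$ to \cref{eq:Separate_korhoKHN} to obtain the cofiber sequence
\[
	\Sigma^{3+\orho} \KH \ulF \to \Sigma^{2\orho} \KHN \to \Sigma^{2\orho} \KH \ul{E}.
\]
By \cref{prop:PInP01} with $k=2$ (the range $[2, k-1] = [2,1]$ being empty), the only nontrivial homotopy Mackey functors of $\Sigma^{2\orho} \KH \ul{E}$ are $\phi_{LDR}^*\ulF$ in degree $2$ and $\ul{g}$ in degree $0$. Meanwhile, the homotopy of the fiber satisfies $\ul{\pi}_n \Sigma^{3+\orho}\KH\ulF \iso \ul{\pi}_{n-3}\Sigma^{\orho}\KH\ulF$ by a trivial reindexing.

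For $n \geq 3$, the cofiber $\Sigma^{2\orho} \KH \ul{E}$ contributes nothing in degrees $n$ and $n+1$, so the long exact sequence immediately gives the isomorphism $\ul{\pi}_n \Sigma^{2\orho}\KHN \iso \ul{\pi}_{n-3} \Sigma^{\orho}\KH\ulF$ claimed in the statement. The remaining work is to analyze degrees $n \in \{0,1,2\}$, where the cofiber may interact with the fiber. Here I would invoke the connectivity of $\Sigma^{\orho}\KH\ulF$: since $\orho$ is an actual (not merely virtual) $\K$-representation, $S^{\orho}$ is built from cells in positive dimension, so $\Sigma^{\orho}\KH\ulF$ is $0$-connective. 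Consequently $\ul{\pi}_n \Sigma^{3+\orho}\KH\ulF = \ul{\pi}_{n-3}\Sigma^{\orho}\KH\ulF$ vanishes for $n \leq 2$.

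With this vanishing in hand, the relevant pieces of the long exact sequence collapse:
\[
	0 \to \ul{\pi}_2 \Sigma^{2\orho}\KHN \to \phi_{LDR}^*\ulF \to 0, \qquad 0 \to \ul{\pi}_1 \Sigma^{2\orho}\KHN \to 0, \qquad 0 \to \ul{\pi}_0 \Sigma^{2\orho}\KHN \to \ul{g} \to 0,
\]
yielding the stated values in degrees $2$, $1$, and $0$, and vanishing in negative degrees. The main (and in fact only) subtle step is the connectivity input for $\Sigma^{\orho}\KH\ulF$, which is mild; beyond this the statement is a straightforward piecing together of \cref{eq:Separate_korhoKHN} and \cref{prop:PInP01}.
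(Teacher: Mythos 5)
Your proof is correct and follows exactly the route the paper intends: the paper's justification for this corollary is precisely the $\orho$-suspension of the sequence \eqref{eq:Separate_korhoKHN} together with \cref{prop:PInP01} for $k=2$, and you have correctly filled in the long exact sequence details, including the (valid) connectivity observation that $\Sigma^{\orho}\KH\ulF$ is connective so that the fiber $\Sigma^{3+\orho}\KH\ulF$ has no homotopy below degree $3$.
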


More generally, combining \eqref{eq:Separate_korhoKHN} with 
\cref{prop:PInP01} 
gives the following.

\begin{cor}
\label{cor:PI>=K+1HKN}
For $k \geq 2$, we have isomorphisms 
\[
	\ul{\pi}_n \Sigma^{k\orho} \KH N_e^\K \F_2 \iso \ul{\pi}_n \Sigma^{3+(k-1)\orho} \KH \ulF = \ul{\pi}_{n-3} \Sigma^{(k-1)\orho} \KH \ulF
\]
for $n \geq k+1$.
\end{cor}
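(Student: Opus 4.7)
The plan is to exploit the cofiber sequence \eqref{eq:Separate_korhoKHN} after an additional suspension. Specifically, suspending
\[
	\Sigma^3 \KH \ulF \to \Sigma^{\orho} \KHN \to \Sigma^{\orho} \KH \ul{E}
\]
by $(k-1)\orho$ yields the cofiber sequence
\[
	\Sigma^{3+(k-1)\orho} \KH \ulF \to \Sigma^{k\orho} \KHN \to \Sigma^{k\orho} \KH \ul{E},
\]
whose associated long exact sequence of $\K$-Mackey functors contains the maps
\[
	\ul{\pi}_{n+1} \Sigma^{k\orho} \KH \ul{E} \to \ul{\pi}_n \Sigma^{3+(k-1)\orho} \KH \ulF \to \ul{\pi}_n \Sigma^{k\orho} \KHN \to \ul{\pi}_n \Sigma^{k\orho} \KH \ul{E}.
\]

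The idea is that for $n \geq k+1$, the cofiber term $\Sigma^{k\orho} \KH \ul{E}$ contributes nothing in the relevant degrees, so the outer map becomes an isomorphism. Concretely, \cref{prop:PInP01} says that the homotopy of $\Sigma^{k\orho} \KH \ul{E}$ is concentrated in degrees $n = 0$, $n \in [2, k-1]$, and $n = k$. Hence for $n \geq k+1$, both $\ul{\pi}_n \Sigma^{k\orho} \KH \ul{E}$ and $\ul{\pi}_{n+1} \Sigma^{k\orho} \KH \ul{E}$ vanish, and the long exact sequence degenerates to the claimed isomorphism $\ul{\pi}_n \Sigma^{3+(k-1)\orho} \KH \ulF \cong \ul{\pi}_n \Sigma^{k\orho} \KHN$. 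The final equality $\ul{\pi}_n \Sigma^{3+(k-1)\orho} \KH \ulF = \ul{\pi}_{n-3} \Sigma^{(k-1)\orho} \KH \ulF$ is then just re-indexing the integer suspension.

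There is no real obstacle here: the work is entirely in \cref{prop:PInP01} and \eqref{eq:Separate_korhoKHN}. The only thing to be careful about is confirming the index range, in particular that $n+1 \geq k+2 > k$ so that the top degree $n = k$ of $\Sigma^{k\orho} \KH \ul{E}$ does not intrude. Since $k \geq 2$, this holds whenever $n \geq k+1$, giving precisely the stated range.
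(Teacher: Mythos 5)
Your proposal is correct and matches the paper's argument exactly: the paper likewise obtains the corollary by combining the $(k-1)\orho$-suspension of the cofiber sequence \eqref{eq:Separate_korhoKHN} with the vanishing of $\ul{\pi}_n \Sigma^{k\orho}\KH\ul{E}$ for $n \geq k+1$ from \cref{prop:PInP01}. Your check that both $\ul{\pi}_n$ and $\ul{\pi}_{n+1}$ of the cofiber vanish in the stated range is the right point to verify.
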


\medskip

However, starting with $k=3$, the homotopy of the left and right terms in the sequence
\begin{equation}
\label{eq:SIkorhoKHN}
	\Sigma^{3+(k-1)\orho} \KH \ulF \to \Sigma^{k\orho} \KHN \to \Sigma^{k\orho} \KH \ul{E}
\end{equation}
begin to overlap.
In order to find the lower homotopy Mackey functors of $\Sigma^{k\orho} \KHN$, it is convenient to consider a different sequence.
The kernel of the surjection $\KN \twoheadrightarrow \ul{\bbF}_2$ is $\ul{B}(2,0) \oplus \ul{g}^2$, and 
we now describe its homotopy.

\begin{prop}
\label{prop:HtpySIkorhoB20}
The nonzero homotopy Mackey functors of $\Sigma^{\orho} \KH (\ul{B}(2,0) \oplus \ul{g}^2)$  are
\[
\ul{\pi}_n \Sigma^{\orho} \KH(\ul{B}(2,0) \oplus \ul{g}^2) \iso
\begin{cases}
\ul{mg} & n=1 \\
\ul{g}^3 & n=0.
\end{cases}
\]
For $k \geq 2$, 
the nonzero homotopy Mackey functors of $\Sigma^{k\orho} \KH (\ul{B}(2,0) \oplus \ul{g}^2)$ are
\[
\ul{\pi}_n \Sigma^{k\orho} \KH (\ul{B}(2,0) \oplus \ul{g}^2) \iso
\begin{cases}
\phi_{LDR}^* \ul{\bbF}_2 & n=k \\
\ul{g}^3 & n \in [2,k-1] \\
\ul{g}^2 & n=1 \\
\ul{g}^3 & n=0.
\end{cases}
\]
\end{prop}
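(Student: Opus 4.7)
My plan begins by isolating the $\ul{g}^2$ summand. Applying \cref{BackgroundSrhoHM}(a) iteratively gives $\Sigma^{k\orho}\KH\ulg \simeq \KH\ulg$, so
\[
	\Sigma^{k\orho}\KH(\ul{B}(2,0) \oplus \ul{g}^2) \simeq \Sigma^{k\orho}\KH\ul{B}(2,0) \vee \KH\ul{g}^2,
\]
and the second summand contributes $\ul{g}^2$ concentrated in degree $0$. This reduces the problem to computing $\upi_\bullet\Sigma^{k\orho}\KH\ul{B}(2,0)$, whose contribution to the stated answer is the full list minus this $\ul{g}^2$ in degree zero.

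To compute $\upi_\bullet\Sigma^{k\orho}\KH\ul{B}(2,0)$, I would resolve $\ul{B}(2,0)$ using short exact sequences of Mackey functors whose other terms have accessible $\Sigma^{k\orho}\KH$-homotopy via the inflation equivalence \cref{eq:inflatingsuspensions}, $\Sigma^{k\orho}\KH\phi_H^*\ul{M} \simeq \phi_H^*\Sigma^{k\sigma}\CH\ul{M}$, combined with \cref{eq:inflatinghomotopy} and the $\RO(C_2)$-graded chart \cref{fig:piC2F2}. A natural candidate is the $2$-torsion short exact sequence $\ulg \hookrightarrow \ul{B}(2,0) \twoheadrightarrow \ul{B}_0$ where $\ulg$ is the image of multiplication by $2$ and $\ul{B}_0 := \ul{B}(2,0)/2$, together with a second short exact sequence $\ul{B}_0 \hookrightarrow \phi_{LDR}^*\ulF \twoheadrightarrow \ul{g}^2$ obtained by embedding $\ul{B}_0$ as the ``diagonal'' sub-Mackey functor of $\phi_{LDR}^*\ulF$. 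Applying $\Sigma^{k\orho}\KH(-)$ to each produces cofiber sequences, and tracing the two resulting long exact sequences of homotopy Mackey functors should assemble the claimed Mackey functors: the $\phi_{LDR}^*\ulF$ in degree $k$ from the top class of $\phi_{LDR}^*\Sigma^{k\sigma}\CH\ulF$; the $\ul{g}^3$'s in degrees $[2,k-1]$ from the intermediate classes of the $C_2$-equivariant chart (each of the three inflations $\phi_H^*$ contributing one copy of $\ulg$); the $\ul{g}^2$ in degree $1$ from the cokernel of the second sequence via the connecting homomorphism; and the $\ulg$ in degree $0$ from the kernel of the first.

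The $k = 1$ case must be treated separately, since the shift equivalences \cref{BackgroundSrhoHM}(b) and (c) cause the low-degree pieces to collapse differently: the would-be top contribution and the cokernel merge through a nontrivial extension to produce the single Mackey functor $\ul{mg}$ in degree $1$, with $\ulg$ in degree $0$. The main obstacle I anticipate is the identification of the connecting homomorphisms in the two stacked long exact sequences. For $k \geq 2$, one must verify that these boundaries split the contributions cleanly, so that the answer really is a direct sum of basic Mackey functors rather than a nontrivial extension; for $k = 1$, one must identify the specific extension realizing $\ul{mg}$ with its tabulated definition in \cref{tab-K4Mackey}. Verifying these behaviors requires tracking explicit generators through the cofiber sequences and confirming their restrictions, transfers, and Weyl actions in each degree.
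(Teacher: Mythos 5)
Your decomposition is valid and genuinely different from the paper's. The paper first computes the base case $k=1$ by writing $\ul{B}(2,0)=\coker(\ulZ^*\into\ulZ)$ and importing Slone's computations of $\Sigma^{\orho}\KH\ulZ$ and $\Sigma^{\orho}\KH\ulZ^*$, and only then bootstraps to $k\geq 2$ via the Postnikov sequence $\Sigma^1\KH\ul{mg}\to\Sigma^{\orho}\KH\ul{B}(2,0)\to\KH\ulg$ together with the short exact sequence $\phi_{LDR}^*\ulf\into\ul{mg}\onto\ulg^2$. Your route instead resolves $\ul{B}(2,0)$ itself by the two Mackey-functor sequences $\ulg\into\ul{B}(2,0)\onto\ul{B}_0$ and $\ul{B}_0\into\phi_{LDR}^*\ulF\onto\ulg^2$ (both of which are correct: $\ul{B}_0$ is exactly the diagonal sub of $\phi_{LDR}^*\ulF$), so the only homotopical input is the inflation formula and the $C_2$-chart for $\CH\ulF$; this is more self-contained, treats all $k\geq 1$ on the same footing, and avoids the $\ulZ$-coefficient computations entirely. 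The price is that you must control two connecting homomorphisms, and both can be handled cleanly: the map $\upi_0\Sigma^{k\orho}\KH\phi_{LDR}^*\ulF\to\upi_0\Sigma^{k\orho}\KH\ulg^2$ is onto because $\upi_0$ of these connective spectra is the right-exact functor $\ul{H}_0(S^{k\orho};-)$, and the connecting map $\upi_1\bigl(\Sigma^{k\orho}\KH\ul{B}_0\bigr)\to\ulg$ must be onto because the same cokernel description gives $\ul{H}_0(S^{k\orho};\ul{B}(2,0))(\K/\K)=\coker\bigl((\Z/2)^3\xrightarrow{(2,2,2)}\Z/4\bigr)\iso\Z/2$, i.e.\ $\upi_0\Sigma^{k\orho}\KH\ul{B}(2,0)=\ulg$ is one-dimensional.

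One correction to your description of the $k=1$ case: $\ul{mg}$ does not arise as a nontrivial extension but as a kernel. For $k=1$ your second long exact sequence reads
\[
0\to\upi_1\Sigma^{\orho}\KH\ul{B}(2,0)\to\phi_{LDR}^*\ulF\xrightarrow{\partial}\ulg\to\upi_0\Sigma^{\orho}\KH\ul{B}(2,0)\to\ulg\to 0,
\]
and $\partial$ is surjective by the same $\ul{H}_0$ argument; since $\partial$ must be $\mathrm{Aut}(\K)$-equivariant, at the top level it is the sum functional $\F_2^3\to\F_2$, and its kernel is precisely $\ul{mg}$ as tabulated in \cref{tab-K4Mackey}. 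With those two points supplied, your outline closes up into a complete proof.
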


\begin{pf}
Since $\Sigma^{k\orho} \KH \ulg$ is equivalent to $\KH \ulg$, the claim amounts to the computation of $\Sigma^{k\orho} \KH \ul{B}(2,0)$.

The Mackey functor $\ul{B}(2,0)$ is the cokernel of the inclusion $\ulZ^* \into \ulZ$, so the homotopy of $\Sigma^{\orho} \KH \ul{B}(2,0)$ can be calculated from the cofiber sequence
\[
\begin{tikzcd}[row sep=3mm]
	\Sigma^{\orho} \KH \ulZ^* 
		\ar[r]
		& 
	\Sigma^{\orho}\KH \ulZ 
		\ar[r]
		& 
	\Sigma^{\orho} \KH \ul{B}(2,0)
\end{tikzcd}
\]
and the equivalence $\Sigma^{\orho} \KH \ulZ^* \simeq \Sigma^3 \KH \ulZ$ \cite{Slone}*{Proposition~4.2}, together with the computation of $\upi_n \Sigma^{\orho} \KH \ulZ$ \cite{Slone}*{Proposition~9.1}. 
Note that \cite{Slone} reports the homotopy of the $\rho = (1 + \orho)$-suspensions of $\KH\ulZ$, whereas we are interested in the $\orho$-suspensions. The long exact sequence associated to the cofiber sequence above immediately yields $\upi_0 \Sigma^{\orho} \KH \ul{B}(2,0)$ and $\upi_1 \Sigma^{\orho} \KH \ul{B}(2,0)$. It also shows that $\upi_n \Sigma^{\orho} \KH \ul{B}(2,0)$ vanishes in degrees $n < 0$, $n = 2$, and $n > 4$. In degrees 3 and 4, the long exact sequence is
\[
\begin{tikzcd}[arrows={thick},]
		&
	\Sigma^3 \KH \ulZ 
		&
	\Sigma^{\orho} \KH \ulZ
		& 
	\Sigma^{\orho} \KH \ul{B}(2,0)
		\\
	n = 4
		&
	\phantom{0}
		&
	0 
		\ar[r] 
		\ar[d, phantom, ""{coordinate, name=A}]
		&
	\upi_4 \Sigma^{\orho} \KH \ul{B}(2,0)
		\ar[dll, rounded corners, 
			to path = { -- ([xshift=5mm]\tikztostart.east)
						|- (A)
						-| ([xshift=-5mm]\tikztotarget.west)
						-- (\tikztotarget)
						}] 
		\\
	n = 3
		&
	\ulZ 
		\ar[r] 
		& 
	\ulZ 
		\ar[r]
		\ar[d, phantom, ""{coordinate, name=B}]
		&
	\upi_3 \Sigma^{\orho} \KH \ul{B}(2,0)
		\ar[dll, rounded corners,
			to path = { -- ([xshift=5mm]\tikztostart.east)
						|- (B)
						-| ([xshift=-5mm]\tikztotarget.west)
						-- (\tikztotarget)
						}]  
		& 
		\\
	n = 2
		&
	0	
		&
	\phantom{0}
\end{tikzcd}
\] 
Since $\ul{B}(2,0)$ vanishes at the underlying level, the underlying level of the map $\ulZ \to \ulZ$ must be an isomorphism. Because these are constant Mackey functors, this determines the homomorphism $\ulZ \to \ulZ$ entirely; it must be an isomorphism. Hence, $\upi_3 \Sigma^{\orho} \KH \ul{B}(2,0)$ and $\upi_4 \Sigma^{\orho} \KH \ul{B}(2,0)$ are zero.

We next turn to the calculation of the $k\orho$-suspensions of $\KH \ul{B}(2,0)$. Given the calculation of $\ul{\pi}_n \Sigma^{\orho} \KH \ul{B}(2,0)$ in the previous paragraph, there is a Postnikov sequence
\[
	\Sigma^1 \KH \ul{mg} \to \Sigma^{\orho} \KH \ul{B}(2,0) \to \KH \ulg.
\]
To use this sequence to compute the homotopy of $\Sigma^{k\orho}\ul{B}(2,0)$, we must first compute the homotopy of $\Sigma^{1 + (k-1)\orho} \KH \ul{mg}$.
As in \cite{GY}*{Proposition~7.4}, there is a short exact sequence of Mackey functors $\phi_{LDR}^* \ul{f} \into \ul{mg} \to \ulg^2$, which gives a fiber sequence
\[
	\Sigma^1 \KH \phi_{LDR}^* \ulF = \Sigma^{\orho} \KH \phi_{LDR}^* \ul{f} \to \Sigma^{\orho} \KH \ul{mg} \to \Sigma^{\orho} \KH \ulg^2 = \KH \ulg^2.
\]
Suspending again by $\Sigma^{(k-2)\orho}$ gives a fiber sequence
\[
	\Sigma^{1 + (k-2)\orho} \KH \phi_{LDR}^* \ulF
	\to 
	\Sigma^{(k-1) \orho} \KH \ul{mg} 
	\to 
	\KH \ul{g}^2.
\]
Using \zcref{eq:inflatingsuspensions}, we may calculate the homotopy of $\Sigma^{1+(k-2)\orho} \KH \phi_{LDR}^* \ulF$ as in the proof of \cref{prop:PInP01} (see also \cref{fig:piC2F2}). Unwinding the cofiber sequences yields the homotopy of $\Sigma^{\orho} \KH B(2,0)$.
\end{pf}

We now use the previous results to determine the Mackey functors $\upi_n \Sigma^{k\orho} \KHN$ for $k \geq 3$.

\begin{theorem} 
\label{thm:PInSigmakrhoHKN}
For $k \geq 3$,
the nonzero homotopy Mackey functors of $\Sigma^{k\orho} \KH N_e^\K \F_2$ are
\[
\ul{\pi}_n \Sigma^{k\orho} \KH N_e^\K \F_2 \iso
\begin{cases}
\ul{\pi}_n \Sigma^{k\orho} \KH \ul{\bbF}_2 & n \geq k+2 \\
\ul{g}^{2k-3} & n=k+1 \\
\phi_{LDR}^* \ul{\bbF}_2 \oplus \ul{g}^{2k-5} & n=k \\
\ul{g}^{2n-2} & n\in[3,k-1] \\
\ul{g}^3 & n=2 \\
\ul{g} & n=0.
\end{cases}
\]
\end{theorem}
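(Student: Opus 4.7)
The plan is to exploit the short exact sequence of $\K$-Mackey functors
\[
0 \to \ul{B}(2,0) \oplus \ul{g}^2 \to \KN \to \ulF \to 0
\]
introduced just before \cref{prop:HtpySIkorhoB20}. Applying $\Sigma^{k\orho}\KH$ produces a cofiber sequence
\[
\Sigma^{k\orho}\KH\bigl(\ul{B}(2,0) \oplus \ul{g}^2\bigr) \to \Sigma^{k\orho}\KHN \to \Sigma^{k\orho}\KH\ulF,
\]
whose fiber has homotopy described by \cref{prop:HtpySIkorhoB20} and whose cofiber has homotopy known from prior work \cites{HoKr,EB,GY} (cf.\ \cref{fig:HF}). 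The strategy is to run the associated long exact sequence of homotopy Mackey functors and identify all the connecting maps.

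The degrees $n \geq k+2$ are the easiest: the fiber's homotopy vanishes there, so $\upi_n\Sigma^{k\orho}\KHN \iso \upi_n\Sigma^{k\orho}\KH\ulF$ immediately. The principal obstacle is the overlap range $n \in \{k, k+1\}$, where the fiber's top contribution $\phi^*_{LDR}\ulF$ at $n = k$ interacts with the top of the positive cone of $\KH\ulF$. The precise appearance of $\phi^*_{LDR}\ulF$ as a direct summand at $n = k$, together with the count $2k-3$ at $n = k+1$, reflects the image and kernel of the connecting homomorphism $\upi_{k+1}\Sigma^{k\orho}\KH\ulF \to \upi_k \Sigma^{k\orho}\KH(\ul{B}(2,0) \oplus \ul{g}^2)$; pinning down this map is the crux of the argument. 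One can attempt this by comparing with the fiber sequence $\Sigma^3\KH\ulF \to \Sigma^{\orho}\KHN \to \Sigma^{\orho}\KH\ul{E}$ from \cref{eq:Separate_korhoKHN} under the inductive machinery on $k$, exploiting naturality to transport the connecting map to a setting where its effect on $\phi^*_{LDR}\ulF$ can be detected after restriction.

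For $n \in [3, k-1]$, both the fiber and the cofiber contribute only copies of $\ulg$, and the claim that $\upi_n\Sigma^{k\orho}\KHN \iso \ulg^{2n-2}$ requires that the connecting maps in this range vanish and that all extensions split as direct sums of $\ulg$. Because $\ulg = \phi^*_{\K}\F$ is inflated from $\K/\K$, every homomorphism between sums of copies of $\ulg$ is a matrix of $\F_2$-scalars, which sharply constrains both differentials and extensions; nontrivial behavior can be ruled out by direct enumeration, or by comparing against the inductive computation underlying \cref{prop:PInP01}. Finally, the low-degree cases $n \in \{0,1,2\}$ are read off from the tail of the long exact sequence: the claims $\upi_2 \iso \ulg^3$, $\upi_1 = 0$, and $\upi_0 \iso \ulg$ are forced by the known low-degree behavior of $\Sigma^{k\orho}\KH\ulF$ together with the fiber's contributions of $\ulg^3$, $\ulg^2$, and $\ulg^3$ in degrees $0$, $1$, and $2$, respectively, via connecting maps dictated by the connectivity of $\KHN$.
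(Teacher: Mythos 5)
Your skeleton matches the paper's: both arguments run the long exact sequence of the cofiber sequence $\Sigma^{k\orho}\KH(\ul{B}(2,0)\oplus\ulg^2)\to\Sigma^{k\orho}\KHN\to\Sigma^{k\orho}\KH\ulF$ and resolve the ambiguity near the top by playing it off against the second sequence $\Sigma^{3+(k-1)\orho}\KH\ulF\to\Sigma^{k\orho}\KHN\to\Sigma^{k\orho}\KH\ul{E}$. Your treatment of degrees $n\geq k+2$ is correct, and your identification of the crux at $n\in\{k,k+1\}$ is in the right spirit: the paper settles the connecting homomorphism there by observing it is an isomorphism after restriction to each $C_2$, hence a surjection of Mackey functors, and then uses the second sequence to pin down the extension.

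However, your handling of the range $n\in[3,k-1]$ rests on a false premise. The connecting homomorphisms $\upi_{n+1}\Sigma^{k\orho}\KH\ulF\to\upi_n\Sigma^{k\orho}\KH(\ul{B}(2,0)\oplus\ulg^2)$ do \emph{not} vanish in this range; they are surjective in every degree $n\leq k$. Equivalently, the map $\upi_n\Sigma^{k\orho}\KHN\to\upi_n\Sigma^{k\orho}\KH\ulF$ is \emph{injective} in all degrees, and the stated Mackey functor is the kernel of the next connecting map, not an extension of the cofiber's homotopy by the fiber's. If the connecting maps vanished and all extensions split, you would obtain $\ulg^{3+a_n}$ where $\ulg^{a_n}\iso\upi_n\Sigma^{k\orho}\KH\ulF$; for instance, when $k=4$ and $n=3$ one has $\upi_3\Sigma^{4\orho}\KH\ulF\iso\ulg^{7}$ and the fiber contributes $\ulg^3$, so your recipe produces $\ulg^{10}$ rather than the correct $\ulg^{2n-2}=\ulg^4$. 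The same issue affects your low-degree claims: the surjectivity of the connecting maps in degrees $0$, $1$, $2$ is not ``dictated by the connectivity of $\KHN$'' and would need to be proved. The paper sidesteps this entirely for $n\leq 2$ by using the other cofiber sequence, whose fiber $\Sigma^{3+(k-1)\orho}\KH\ulF$ is $3$-connective, so that $\upi_n\Sigma^{k\orho}\KHN\iso\upi_n\Sigma^{k\orho}\KH\ul{E}$ for $n\leq 2$; these values are supplied by \cref{prop:PInP01}. To repair your argument, replace the vanishing claim with a proof that the connecting maps are surjective throughout $0\leq n\leq k$ (for example by restriction to the $C_2$ subgroups, where the fiber and cofiber are well understood), and then read off each $\upi_n\Sigma^{k\orho}\KHN$ as a kernel.
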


\begin{pf}
The homotopy Mackey functors in dimensions 0, 1, and 2 are given by \cref{prop:PInP01}.
Those in dimensions at least $k+1$ are given by \cref{cor:PI>=K+1HKN}.
The answer is stated differently here to emphasize the relation to $\Sigma^{k\orho} \KH \ulF$. These spectra are related via the cofiber sequence
\begin{equation}
\label{cofib:augHNHF}
	\Sigma^{k\orho} \KH \ul{B}(2,0) \oplus \ulg^2 \to \Sigma^{k\orho} \KHN \to \Sigma^{k\orho} \KH \ulF.
\end{equation}
By \cref{prop:HtpySIkorhoB20}, the map $\Sigma^{k\orho} \KHN \to \Sigma^{k\orho} \KH \ulF$ induces an isomorphism of homotopy Mackey functors in degrees at least $k+2$. In fact, we claim that it is an {\it injection} in all degrees.
Consider, for instance the long exact sequence associated to \zcref{cofib:augHNHF} for $k=3$, where the only Mackey functor remaining to be determined is $\upi_3 \Sigma^{3\orho} \KHN$.
\[
\begin{tikzcd}[arrows={thick,}]
		& 
	\upi_n \Sigma^{3\orho} \KH \ul{B}(2,0) \oplus \ulg^2 
 		& 
	\upi_n \Sigma^{3\orho} \KHN 
		& 
	\upi_n \Sigma^{3\orho} \KH \ulF 
		\\
 	n=4 
		\ar[r, phantom, ""{coordinate, name=X}]
		& 
	0  \ar[r, "0"]
		\ar[from=X, to=2-2, dashed]
		& 
	\ulg^3 \ar[r,hookrightarrow]  
		\ar[d, phantom, ""{coordinate, name=A}]
		& 
	\phi_{LDR}^* \ulF \oplus \ulg^3
		\ar[dll, rounded corners, 
			to path={ -- ([xshift=5mm]\tikztostart.east)
					  |- (A) 
					  -| ([xshift=-5mm]\tikztotarget.west)
					  -- (\tikztotarget) 
					}]
		\\
 	n=3 
		& 
	\phi_{LDR}^* \ulF \ar[r,"0"] 
		& 
	? \ar[r,hookrightarrow]
		\ar[d, phantom, ""{coordinate, name=B}] 
		& 
	\phi_{LDR}^* \ulF \oplus \ulg^4
		\ar[dll, rounded corners, 
			to path={ -- ([xshift=5mm]\tikztostart.east)
					  |- (B) 
					  -| ([xshift=-9mm]\tikztotarget.west)
					  -- (\tikztotarget) 
					}]
 		\\
 	n=2 
		& 
	\ulg^3 \ar[r,"0"]  
		& 
	\ulg^3 \ar[r,hookrightarrow] 
		\ar[d, phantom, ""{coordinate, name=C}]
		& 
	\ulg^5
		\ar[dll, rounded corners, 
			to path={ -- ([xshift=13mm]\tikztostart.east)
					  |- (C) 
					  -| ([xshift=-9mm]\tikztotarget.west)
					  -- (\tikztotarget) 
					}]
 		\\
	n=1 
		& 
	\ulg^2 
		\ar[r, "0"]
		& 
	0 	
		\ar[r, "0"]
		& 
	\ulg^3
		\ar[r, phantom, ""{coordinate, name=Y}, near end]
		\ar[from=5-4,to=Y, dashed]
		&
	\phantom{a}
\end{tikzcd}
\]
The connecting homomorphism $\upi_4 \Sigma^{3\orho} \KH \ulF \to \upi_3 \KH \ul{B}(2,0) \oplus \ulg^2$ is an isomorphism upon restricting to any $C_2$, which forces it to be a surjection of Mackey functors. Thus $\upi_3 \Sigma^{3\orho} \KHN$ is a sub-Mackey functor of $\upi_3 \Sigma^{3\orho} \KH \ulF \iso \phi_{LDR}^* \ulF \oplus \ulg^4$.
On the other hand, the sequence \eqref{eq:SIkorhoKHN} forces a short exact sequence
\[ 
	\ulg \into  \upi_3 \Sigma^{3\orho}\KHN \onto \phi_{LDR}^* \ulF
\]
This combines to force an isomorphism $ \upi_3 \Sigma^{3\orho} \KHN \iso \phi_{LDR}^* \ulF \oplus \ulg$.
A similar argument works for higher values of $k$.
\end{pf}

\begin{remark}
\label{rem:HNHFComp}
We argued that the map $\Sigma^{k\orho}\KHN \to \Sigma^{k\orho} \KH \ulF$ induces an {isomorphism} on Mackey functors in degrees at least $k+2$ by showing that the fiber in \zcref{cofib:augHNHF} has no homotopy above degree $k$, in \cref{prop:HtpySIkorhoB20}. 
In other words, we used that the homology of $S^{k\orho}$ with coefficients in $\ul{B}(2,0) \oplus \ulg^2$ vanishes above degree $k$.
An alternative argument for this is that $S^{k\orho}$ has a $\K$-CW structure in which all cells in degrees $k+1$ or higher are $\K$-free. 
In particular, it is the $\K$-CW structure associated to the $k$-fold smash product $\left(S^{\orho}\right)^{\wedge k} \simeq S^{k\orho}$. We need only observe that $\K/H \times \K/J \cong \K/e$ for any two distinct order two subgroups $H$ and $J$. Since $ \ul{B}(2,0) \oplus \ulg^2$ vanishes at the underlying level, the homology of $S^{k\overline{\rho}}$ with these {coefficients} will vanish in degrees at least $k+1$. 
\end{remark}

The blue shading in \cref{fig:HF,fig:HN}  highlights the regions
{in which the isomorphism $\upi_n \Sigma^{k\orho} \KHN \iso \upi_n \Sigma^{k\orho} \KH \ulF$ was shown to hold in the positive cones.}

\subsection{The Negative Cone of \texorpdfstring{$\KH N_e^{\K} \bbF_2$}{$HN$}}
\label{sec:PinknegorhoHN}

We now turn to the case of $\Sigma^{k\orho} \KH N_e^{\K} \bbF_2$ for $k$ negative. The strategy is largely the same as in \cref{sec:PinkposorhoHN}: most of the answer follows easily from a cofiber sequence, while some extension problems are resolved by considering a separate cofiber sequence.

We will again use the Mackey functor $\ul{B}(2,0)$. An argument as in the proof of \cref{prop:HtpySIkorhoB20} gives the following computation.

\begin{prop}
\label{prop:HtpySInegkorhoB20}
The nonzero homotopy Mackey functors of $\Sigma^{-\orho} \KH \ul{B}(2,0) \oplus \ul{g}^2$  are
\[
\ul{\pi}_n \Sigma^{-\orho} \KH \ul{B}(2,0) \oplus \ul{g}^2 \iso
\begin{cases}
\ul{g}^3 & n=0 \\
\ul{mg}^* & n=-1.
\end{cases}
\]
For $k \geq 2$, 
the nonzero homotopy Mackey functors of $\Sigma^{-k\orho} \KH \ul{B}(2,0) \oplus \ul{g}^2$ are
\[
\ul{\pi}_n \Sigma^{k\orho} \KH \ul{B}(2,0) \oplus \ul{g}^2 \iso
\begin{cases}
\ul{g}^3 & n=0 \\
\ul{g}^2 & n=-1 \\
\ul{g}^3 & n \in [-k+1,-2] \\
\phi_{LDR}^* \ul{\bbF}_2^* & n=-k.
\end{cases}
\]
\end{prop}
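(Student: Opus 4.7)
The plan is to mirror the proof of \cref{prop:HtpySIkorhoB20}, dualized to negative suspensions. Since $\Sigma^{-k\orho} \KH \ulg \simeq \KH \ulg$ for all $k \geq 1$, it suffices to compute the homotopy Mackey functors of $\Sigma^{-k\orho} \KH \ul{B}(2,0)$; the two summands $\ulg$ in $\ul{B}(2,0) \oplus \ulg^2$ will then contribute an additional $\ulg^2$ concentrated in degree $0$.

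For the base case $k = 1$, I would apply $\Sigma^{-\orho} \KH(-)$ to the defining short exact sequence $\ulZ^* \into \ulZ \onto \ul{B}(2,0)$, producing the cofiber sequence
\[
\Sigma^{-\orho} \KH \ulZ^* \to \Sigma^{-\orho} \KH \ulZ \to \Sigma^{-\orho} \KH \ul{B}(2,0).
\]
Desuspending the equivalence $\Sigma^{\orho} \KH \ulZ^* \simeq \Sigma^{3} \KH \ulZ$ from \cite{Slone}*{Proposition~4.2} by $2\orho$ identifies the fiber with $\Sigma^{3 - 2\orho} \KH \ulZ$. Combined with Slone's computation of the $\orho$-graded homotopy of $\KH \ulZ$ \cite{Slone}*{Proposition~9.1} (adjusting from the $\rho$-grading convention used there), the resulting long exact sequence will determine $\upi_n \Sigma^{-\orho} \KH \ul{B}(2,0)$ in all but one potentially ambiguous degree. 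That degree can be resolved exactly as in the positive cone argument, by noting that $\ul{B}(2,0)$ vanishes at the underlying level, which forces the relevant map between constant Mackey functors to be an isomorphism.

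For $k \geq 2$, the $k = 1$ answer yields a Postnikov-style fiber sequence
\[
\Sigma^{-1} \KH \ul{mg}^* \to \Sigma^{-\orho} \KH \ul{B}(2,0) \to \KH \ulg.
\]
Suspending by $\Sigma^{-(k-1)\orho}$ reduces the problem to computing the homotopy of $\Sigma^{-1 - (k-1)\orho} \KH \ul{mg}^*$ inductively. For this I would employ the dual of the short exact sequence from \cite{GY}*{Proposition~7.4}, namely
\[
\ulg^2 \into \ul{mg}^* \onto \phi_{LDR}^* \ulf^*,
\]
together with the equivalences in \cref{BackgroundSrhoHM}, which translate the negative $\orho$-suspensions of the inflated terms into $RO(C_2)$-graded suspensions of $\CH \ulF$ whose homotopy can be read off from \cref{fig:piC2F2} via \cref{eq:inflatinghomotopy} and \cref{eq:inflatingsuspensions}.

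The main obstacle will be the extension problems arising in the long exact sequences at each suspension level, most notably the ones that pin down the summand $\phi_{LDR}^* \ulF^*$ appearing in degree $-k$. These should be resolved in the style of \cref{thm:PInSigmakrhoHKN}: restricting to each order-two subgroup of $\K$ and tracking the restriction maps forces the extensions to split canonically, yielding the stated Mackey functor structure.
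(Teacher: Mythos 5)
Your overall strategy is exactly the paper's: the published proof of this proposition consists of the single sentence that one argues as in \cref{prop:HtpySIkorhoB20}, together with a remark that Brown--Comenetz duality (using the self-duality of $\ul{B}(2,0)$ and $\ulg$) gives a quicker alternative. Your dualization of the positive-cone argument is essentially right, but two details need repair. First, your Postnikov sequence for $\Sigma^{-\orho}\KH\ul{B}(2,0)$ has the wrong orientation: for a spectrum $X$ with homotopy concentrated in degrees $0$ and $-1$, the fiber sequence supplied by the Postnikov tower is $\tau_{\geq 0}X \to X \to \tau_{\leq -1}X$, i.e.\ $\KH\ulg \to \Sigma^{-\orho}\KH\ul{B}(2,0) \to \Sigma^{-1}\KH\ul{mg}^*$. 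The sequence you wrote, with $\Sigma^{-1}\KH\ul{mg}^*$ as the fiber and $\KH\ulg$ as the cofiber, exists only if the spectrum splits, which you have not shown (and do not need); note that in the positive-cone proof the connective cover $\Sigma^{1}\KH\ul{mg}$ is likewise the \emph{fiber}. With the correct orientation, the rest of your induction (suspend by $-(k-1)\orho$, resolve $\ul{mg}^*$ by $\ulg^2 \into \ul{mg}^* \onto \phi_{LDR}^*\ulf$, and apply \cref{BackgroundSrhoHM} and \cref{eq:inflatingsuspensions}) goes through unchanged. Second, in the base case you cite \cite{Slone}*{Proposition~9.1} for the homotopy of $\Sigma^{3-2\orho}\KH\ulZ$, but that result concerns positive $\orho$-suspensions of $\KH\ulZ$; what you actually need is its negative cone (equivalently, the positive cone of $\KH\ulZ^*$), which is deduced from Proposition~9.1 by Brown--Comenetz duality as in \cite{Slone}*{Section~8}. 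Since you must invoke duality there anyway, it is worth observing that the paper's suggested shortcut --- dualizing all of \cref{prop:HtpySIkorhoB20} at once, using that $\ul{B}(2,0)$ and $\ulg$ are self-dual --- yields the entire statement with no further computation.
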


\begin{remark}
Rather than arguing as in \cref{prop:HtpySIkorhoB20}, an alternative method to obtain \cref{prop:HtpySInegkorhoB20} is to use Brown-Comenetz duality, as in \cite{Slone}*{Section~8}, since the Mackey functors $\ul{B}(2,0)$ and $\ul{g}$ are self-dual.
\end{remark}

Returning to $\Sigma^{-k\orho} \KHN$, the lower homotopy groups are captured by suspensions of $\KH \ulF$, as we now describe (see also \cref{fig:FiberSeqnSiHN}). {In \cref{fig:HF,fig:HN}, we use red shading to indicate the region of the negative cone where $\pi_{n+k\orho} \KH \ulF$ and $\pi_{n + k \orho} \KHN$ agree.} 

\begin{figure}
\caption{The fiber sequence $\upi_* \Sigma^{-4\orho} \KH \ul{B}(2,0) \oplus \ulg^2 \to \upi_* \Sigma^{-4\orho} \KHN \to \upi_* \Sigma^{-4\orho} \KH \ulF$. In the green region, the homotopy of $\Sigma^{-4\orho}\KHN$ matches the homotopy of the fiber, and in the red region, the homotopy is the same as the homotopy of $\KH\ulF$.}
\label{fig:FiberSeqnSiHN}
\begin{tikzpicture}
\filldraw[color=red, fill opacity=0.2, thick, rounded corners, xshift=1.7ex] (2.5, -4.5) -- (9, -4.5) -- (9,-7.5) -- (2.5,-7.5) -- cycle;
\filldraw[color=green, fill opacity=0.2, thick, rounded corners, xshift=1.7ex] (-2.25, 0.5) -- (5.25, 0.5) -- (5.25,-2.5) -- (-2.25,-2.5) -- cycle;
\node at (0,0) {$\upi_0 \Sigma^{-4\orho} \KH \ul{B}(2,0) \oplus \ulg^2$};
\node at (4,0) {$\upi_0 \Sigma^{-4\orho} \KHN$};
\node at (8,0) {0};
\node at (0,-1) {$\upi_{-1} \Sigma^{-4\orho} \KH \ul{B}(2,0) \oplus \ulg^2$};
\node at (4,-1) {$\upi_{-1} \Sigma^{-4\orho} \KHN$};
\node at (8,-1) {0};
\node at (0,-2) {$\upi_{-2} \Sigma^{-4\orho} \KH \ul{B}(2,0) \oplus \ulg^2$};
\node at (4,-2) {$\upi_{-2} \Sigma^{-4\orho} \KHN$};
\node at (8,-2) {0};
\node at (0,-3) {$\upi_{-3} \Sigma^{-4\orho} \KH \ul{B}(2,0) \oplus \ulg^2$};
\node at (4,-3) {$\upi_{-3} \Sigma^{-4\orho} \KHN$};
\node at (8,-3) {$\upi_{-3} \Sigma^{-4\orho} \KH \ulF$};
\node at (0,-4) {$\upi_{-4} \Sigma^{-4\orho} \KH \ul{B}(2,0) \oplus \ulg^2$};
\node at (4,-4) {$\upi_{-4} \Sigma^{-4\orho} \KHN$};
\node at (8,-4) {$\upi_{-4} \Sigma^{-4\orho} \KH \ulF$};
\node at (0,-5) {0};
\node at (4,-5) {$\upi_{-5} \Sigma^{-4\orho} \KHN$};
\node at (8,-5) {$\upi_{-5} \Sigma^{-4\orho} \KH \ulF$};
\node at (0,-6) {$\vdots$};
\node at (4,-6) {$\vdots$};
\node at (8,-6) {$\vdots$};
\node at (0,-7) {0};
\node at (4,-7) {$\upi_{-12} \Sigma^{-4\orho} \KHN$};
\node at (8,-7) {$\upi_{-12} \Sigma^{-4\orho} \KH \ulF$};
\end{tikzpicture}
\end{figure}

\begin{prop}
\label{prop:PI<-kHKN}
The map $\Sigma^{-k\orho} \KH N_e^\K \F_2 \to \Sigma^{-k\orho} \KH \ulF$ is an isomorphism on $\upi_n$ for $n < -k$,
while the map $\Sigma^{-k\orho} \KH \ul{B}(2,0) \oplus \ulg^2 \to \Sigma^{-k\orho} \KH N_e^\K \F_2$ is an isomorphism on $\upi_n$ for $n\in \{0,-1,-2\}$.
\end{prop}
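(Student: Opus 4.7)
The strategy is to apply $\KH$ and $\Sigma^{-k\orho}$ to the short exact sequence of Mackey functors
\[
\ul{B}(2,0) \oplus \ulg^2 \into N_e^\K \bbF_2 \onto \ulF,
\]
obtaining a cofiber sequence of $\K$-spectra
\[
\Sigma^{-k\orho} \KH(\ul{B}(2,0) \oplus \ulg^2) \to \Sigma^{-k\orho} \KHN \to \Sigma^{-k\orho} \KH \ulF,
\]
and then to read off both claims from the associated long exact sequence of homotopy Mackey functors.

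For the first claim, \cref{prop:HtpySInegkorhoB20} tells us that $\upi_n \Sigma^{-k\orho} \KH(\ul{B}(2,0) \oplus \ulg^2) = 0$ for all $n < -k$. Inserting this into the four-term portion of the long exact sequence running from degree $n$ down to degree $n-1$, both fiber terms vanish whenever $n < -k$, forcing $\upi_n \Sigma^{-k\orho} \KHN \to \upi_n \Sigma^{-k\orho} \KH \ulF$ to be an isomorphism.

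For the second claim, an analogous LES analysis shows that to obtain isomorphisms $\upi_n \Sigma^{-k\orho} \KH(\ul{B}(2,0) \oplus \ulg^2) \iso \upi_n \Sigma^{-k\orho} \KHN$ for $n \in \{0, -1, -2\}$, it suffices to verify the vanishing of $\upi_m \Sigma^{-k\orho} \KH \ulF$ for $m \in \{-2, -1, 0, 1\}$. The case $m = 1$ is immediate because the cohomology of $S^{k\orho}$ with $\ulF$ coefficients is concentrated in non-negative cohomological degrees. The remaining cases $m \in \{-2, -1, 0\}$ are read off from the known $\RO(\K)$-graded homotopy of $\KH \ulF$ (cf.\ \cref{fig:HF} and \cites{HoKr,EB}), where for the relevant range of $k$ the negative cone of $\Sigma^{-k\orho} \KH \ulF$ is supported strictly below degree $-2$.

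The main obstacle is verifying these vanishings against \cref{fig:HF} and fixing the admissible range of $k$ for the second claim: for large $k$ (e.g.\ the $k=4$ case shown in \cref{fig:FiberSeqnSiHN}) the ranges $\{0, -1, -2\}$ and $\{n : n < -k\}$ are well-separated from the nontrivial homotopy of $\KH \ulF$, so the argument reduces to a direct read of the chart, but smaller values of $k$ would have to be inspected by hand.
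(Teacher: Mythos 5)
Your proposal is correct and matches the paper's argument: both use the cofiber sequence coming from $\ul{B}(2,0)\oplus\ulg^2 \into N_e^\K\F_2 \onto \ulF$, the $(-k)$-connectivity of the fiber for the first claim, and the vanishing of $\upi_m\Sigma^{-k\orho}\KH\ulF$ for $m\in\{-2,-1,0,1\}$ for the second. The only difference is that the paper removes your residual caveat about small $k$ uniformly, via the equivalence $\Sigma^{-k\orho}\KH\ulF\simeq\Sigma^{-3-(k-1)\orho}\KH\ulF^*$, which shows the cofiber has no homotopy above degree $-3$ for every $k\geq 1$.
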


\begin{pf}
This follows from the cofiber sequence
\[
	\Sigma^{-k\orho} \KH \ul{B}(2,0) \oplus \ulg^2 \to \Sigma^{-k\orho} \KHN \to \Sigma^{-k\orho} \KH \ulF,
\]
together with the fact that $\Sigma^{-k\orho} \KH \ul{B}(2,0) \oplus \ulg^2$ is $-k$-connective
 and $\Sigma^{-k\orho} \KH \ulF \simeq \Sigma^{-3-(k-1)\orho} \KH \ulF^*$ has no homotopy above dimension $-3$.
\end{pf}

Note that \cref{prop:PI<-kHKN} captures all of the homotopy groups in the case of $-k$ equal to either $-1$ or $-2$. The remaining cases are described in the next result.

\begin{theorem} For $-k\leq -3$,  
the nonzero homotopy Mackey functors of $\Sigma^{-k\orho} \KH N_e^\K \F_2$ are
\[
\ul{\pi}_n \Sigma^{-k\orho} \KH N_e^\K \F_2 \iso
\begin{cases}
\ul{g}^3 & n=0 \\
\ul{g}^2 & n=-1 \\
\ul{g}^3 & n=-2 \\
\ul{g}^{-2n-2} & n\in[-k+1,-3] \\
\phi_{LDR}^* \ulF^* \oplus \ul{g}^{2k-5} & n=-k \\
\ul{\pi}_n \Sigma^{-k\orho} \KH \ul{\bbF}_2 & n \leq -k-1.
\end{cases}
\]
\end{theorem}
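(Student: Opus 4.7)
The approach mirrors the proof of \cref{thm:PInSigmakrhoHKN} for the positive cone. The plan is to use the cofiber sequence
\[
\Sigma^{-k\orho} \KH \bigl(\ul{B}(2,0)\oplus \ulg^2\bigr) \longrightarrow \Sigma^{-k\orho} \KHN \longrightarrow \Sigma^{-k\orho} \KH \ulF,
\]
combined with the negative-cone computation of the fiber in \cref{prop:HtpySInegkorhoB20} and the known $\RO(\K)$-graded homotopy of $\KH\ulF$ displayed in \cref{fig:HF}. The extreme ranges are already handled by \cref{prop:PI<-kHKN}: the values at $n\in\{0,-1,-2\}$ come from the fiber's concentration in degrees $[-k,0]$ together with the vanishing of $\upi_n \Sigma^{-k\orho}\KH\ulF$ there, while the values at $n\leq -k-1$ come from the vanishing of the fiber, so that $\Sigma^{-k\orho}\KHN$ matches $\Sigma^{-k\orho}\KH\ulF$ in that range.

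For the intermediate range $n\in[-k,-3]$ (which is nonempty only for $k\geq 3$), I would extract the remaining Mackey functors from the associated long exact sequence. In this range the fiber contributes $\ulg^3$ for $n\in[-k+1,-2]$ and jumps to $\phi_{LDR}^*\ulF^*$ at $n=-k$, by \cref{prop:HtpySInegkorhoB20}, while $\upi_n \Sigma^{-k\orho}\KH\ulF$ is a direct sum of inflated $\ulg$ and $\phi_{LDR}^*\ulF^*$ summands as read off \cref{fig:HF}. Most connecting homomorphisms vanish for formal reasons: the fiber vanishes at the underlying level, and the inflated Mackey functors $\ulg$ and $\phi_{LDR}^*\ulF^*$ admit only very restricted homomorphisms between themselves. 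What remains are short exact sequences whose outer terms are sums of these inflated Mackey functors, which split as direct sums by the triviality of the Weyl actions involved. This produces the asserted count $\ulg^{-2n-2}$ for $n\in[-k+1,-3]$.

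The main obstacle I expect is pinning down the extension at $n=-k$, the negative-cone analogue of the top-dimensional extension problem resolved in the proof of \cref{thm:PInSigmakrhoHKN}. I would resolve it by the same technique used there: restricting the cofiber sequence to each order-two subgroup $H\in\{L,D,R\}$ reduces, via the identification $\res^\K_H \KHN \simeq \CH \CN$ from \cref{eq:resnorm}, to a $C_2$-equivariant computation that is controlled by \cref{fig:piC2NF2}. Comparing with that restriction pins down $\upi_{-k}\Sigma^{-k\orho}\KHN$ at each order-two subgroup and forces the connecting map $\upi_{-k+1}\Sigma^{-k\orho}\KH\ulF \to \upi_{-k}\Sigma^{-k\orho}\KH\bigl(\ul{B}(2,0)\oplus\ulg^2\bigr)$ to surject onto the $\phi_{LDR}^*\ulF^*$ summand. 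The resulting short exact sequence then splits, producing $\phi_{LDR}^*\ulF^*\oplus\ulg^{2k-5}$ as claimed.
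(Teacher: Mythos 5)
Your setup is right as far as it goes: the cofiber sequence $\Sigma^{-k\orho}\KH(\ul{B}(2,0)\oplus\ulg^2)\to\Sigma^{-k\orho}\KHN\to\Sigma^{-k\orho}\KH\ulF$ together with \cref{prop:HtpySInegkorhoB20} and \cref{prop:PI<-kHKN} does dispose of the ranges $n\in\{0,-1,-2\}$ and $n\leq -k-1$, exactly as in the paper. The gap is in the intermediate range, and above all at $n=-k$. First, your claim that the connecting map $\upi_{-k+1}\Sigma^{-k\orho}\KH\ulF\to\upi_{-k}\Sigma^{-k\orho}\KH(\ul{B}(2,0)\oplus\ulg^2)\iso\phi_{LDR}^*\ulF^*$ surjects is both impossible and self-defeating: the source in that degree is a finite sum of copies of $\ulg$, concentrated at level $\K/\K$, and no map out of such a Mackey functor can be surjective at the levels $\K/L$, $\K/D$, $\K/R$ where $\phi_{LDR}^*\ulF^*$ is nonzero; moreover, if the connecting map \emph{were} surjective, $\phi_{LDR}^*\ulF^*$ would be killed and could not appear in $\upi_{-k}\KHN$ at all. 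What the theorem actually requires is that this connecting map be \emph{zero}, which is not formal (any linear map $\F_2^m\to\F_2^3$ at the top level defines a Mackey functor map $\ulg^m\to\phi_{LDR}^*\ulF^*$).

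Second, and more fundamentally, restriction to the order-two subgroups cannot resolve the resulting extension $\phi_{LDR}^*\ulF^*\into\upi_{-k}\Sigma^{-k\orho}\KHN\onto\ulg^{2k-5}$. The competing extensions differ only at the level $\K/\K$ (whether the top group acquires a $\Z/4$, or whether the restrictions $\K\to L,D,R$ become nontrivial on the $\ulg$ part, as happens for $\ul{E}$ in the extension $\phi_{LDR}^*(\ulF^*)\into\ul{E}\onto\ulg$), and all of these candidates have identical restrictions to every proper subgroup. The same issue afflicts the range $n\in[-k+1,-3]$: an extension of $\ulg^a$ by $\ulg^b$ need not be $\ulg^{a+b}$ (it can contain $\Z/4$'s at the top), and ``triviality of Weyl actions'' does not rule this out. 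This is precisely why the paper's proof plays the first fiber sequence off against a \emph{second} one, $\Sigma^{-k\orho}\KH\ulF^*\to\Sigma^{-k\orho}\KHN\to\Sigma^{-k\orho}\KH\ul{E}$, which exhibits each $\upi_n\Sigma^{-k\orho}\KHN$ in this range as a \emph{quotient} of a sum of $\ulg$'s and $\phi_{LDR}^*\ulF^*$'s; a quotient can introduce neither a $\Z/4$ nor a nontrivial restriction, while the first sequence exhibits it as an extension containing $\phi_{LDR}^*\ulF^*$ as a sub. Only the combination forces the split answer. To repair your argument you need to add this second sequence (or some equivalent input at the $\K/\K$ level); the restriction-to-$C_2$ comparison alone cannot do the job.
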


\begin{pf}
By \cref{prop:PI<-kHKN}, it remains to capture the homotopy in degrees $n\in [-k,-3]$.
The Mackey functors listed here are the only possibilities that are simultaneously compatible with the fiber sequence 
\[
	\Sigma^{-k\orho} \KH \ul{B}(2,0) \oplus \ulg^2 \to \Sigma^{-k\orho} \KHN \to \Sigma^{-k\orho} \KH \ulF,
\]
as well as the fiber sequence
\[
	\Sigma^{-k\orho} \KH \ulF^* \to \Sigma^{-k\orho} \KHN \to \Sigma^{-k\orho} \KH \ul{E}.
\]
For example, in the case $k=3$, the first fiber sequence provides the short exact sequence 
\[
	\phi_{LDR}^* \ulF^* \into \upi_{-3} \Sigma^{-3\orho} \KHN \onto \ulg,
\]
while the second provides the short exact sequence
\[
	\ulg^3 \into \phi_{LDR}^* \ulF^* \oplus \ulg^4 \onto \upi_{-3} \Sigma^{-3\orho} \KHN.
\]
It follows that $\upi_{-3} \Sigma^{-3\orho} \KHN$ must be $\phi_{LDR}^* \ulF^* \oplus \ulg$.
\end{pf}

\begin{remark}
Dual to the proof of \cref{thm:PInSigmakrhoHKN}, here the map $\upi_n \Sigma^{-k\orho} \KH \ulF^* \to \upi_n \Sigma^{-k\orho} \KH N_e^\K \F_2$ is {\it surjective} for $k<0$.
\end{remark}

The homotopy $\ul{\pi}_{n + k\orho}\KHN = \ul{\pi}_{n + k \orho}\KH N_e^\K \F_2$ is displayed in \cref{fig:HN}.

\section{The Homotopy of \texorpdfstring{$\KH N_D^K \F_2$}{H N_D^K F_2}}
\label{sec:HNDF}

We compute the homotopy Mackey functors $\underline{\pi}^\K_{n+k\orho}(\KHND)$, where $\KND = N_D^\K \underline{\bbF_2}$. The Tambara functor structure of $\KND$ is displayed in \cref{figure:normLtoK}. As in the previous section, we first compute the positive cone.

\subsection{The Positive Cone of \texorpdfstring{$\KH N_D^{\K} \ulF$}{$HN_D$}}
\label{sec:PinkposorhoHND}

We compute the homotopy Mackey functors of the $\K$-spectra $\Sigma^{k\orho} \KHND$. As usual we begin with a short exact sequence of Mackey functors.

Let us write $\overline{\KND}$ for the sub-Mackey functor of $\KND$ generated at the proper subgroups, as displayed in \cref{figure:ReducednormLtoK}. We then have a short exact sequence
\begin{equation}
\label{eq:SESNDK}
	\overline{\KND} \into \KND \onto \ulg.
\end{equation}
Furthermore, we have a short exact sequence of Mackey functors
\begin{equation}
\label{eq:SESNbarDK}
	\ul{v}_D^* \into \overline{\KND}  \onto \varphi^*_D \bbF_2^* \oplus \ul{n}_D^*,
\end{equation}
{where $\ul{v}_D^*$ is as described in \cref{tab-K4Mackey}. }

\begin{figure}
\caption{The $\K$-Mackey functor $\overline{\KND} =\overline{N_D^\K \underline{\F_2}}$, which is the kernel of the augmentation $N_D^\K(\ulF) \to \ulg$.}
\label{figure:ReducednormLtoK}
\begin{center}
	\begin{tikzpicture}[scale=1]
		\node (K) at (0,6) {$\Z/2 \oplus \Z/2\{\genVR\}$};
		\node (L) at (-3,3.5) {$\Z/4$};
		\node (D) at (0,3) {$\Z/2$};
		\node (R) at (3,3.5) {$\Z/4$};
		\node (e) at (0,0) {$\Z/2$};

		\draw[bend right=10,->] (L) to node[fill=white]{1} (e);
		\draw[bend right=10,->] (R) to node[fill=white]{1} (e);
		\draw[bend right=10,->] (D) to node[fill=white]{1} (e);
		\draw[bend right=10,->] (K) to node[fill=white]{$(2\ 0)$} (L);
		\draw[bend right=10,->] (K) to node[fill=white]{$0$} (D);
		\draw[bend right=10,->] (K) to node[fill=white]{$(2\ 0)$} (R);

		\draw[bend right=10,color=\inductioncolor,->] (e) to node[right] {2} (L);
		\draw[bend right=10,color=\inductioncolor,->] (e) to node[right]{2} (R);
		\draw[bend right=10,color=\inductioncolor,->] (e) to node[right]{0} (D);
		\draw[bend right=10,color=\inductioncolor,->] (L) to node[pos=(0.3),right=(0.5ex)]{$\binom{1}{1}$} (K);
		\draw[bend right=10,->,color=\inductioncolor] (D) to node[near start, right]{$\binom{1}{0}$} (K);
		\draw[bend right=15,color=\inductioncolor,->] (R) to node[right]{$\binom{1}{1}$}(K);

	\end{tikzpicture}
\end{center}
\end{figure}

\begin{prop}
\label{Srhon*}
There is an equivalence $\Sigma^{\orho} \KH \ul{n}_D^* \simeq \Sigma^1 \KH \ul{n}_D$.
\end{prop}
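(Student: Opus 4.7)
The plan is to reduce the equivalence to a known $C_2$-equivariant identity via the inflation--suspension formula \cref{eq:inflatingsuspensions}, after first stripping off the $\sigma_D$ component using triviality of the $D$-restriction.

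I would first observe that $\ul{n}_D^*$ vanishes at the $\K/D$ and $\K/e$ levels (from its definition in the notation tables and its role in the short exact sequence \cref{eq:SESNbarDK}), so that the restriction $\downarrow^\K_D \KH \ul{n}_D^*$ is contractible. By \cref{cor:sigmaH suspensions of trivial H}, this yields $\Sigma^{\sigma_D} \KH \ul{n}_D^* \simeq \KH \ul{n}_D^*$. Since $\orho = \sigma_L + \sigma_D + \sigma_R$, the problem reduces to establishing $\Sigma^{\sigma_L + \sigma_R} \KH \ul{n}_D^* \simeq \Sigma^1 \KH \ul{n}_D$.

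Next, I would exploit the fact that $\ul{n}_D$ is supported on the $\K/L$, $\K/R$, and $\K/\K$ levels to express it as a direct sum (or, failing that, a short exact sequence) of inflations $\phi_L^* \ul{M}_L$ and $\phi_R^* \ul{M}_R$ for suitable $C_2$-Mackey functors. The formula \cref{eq:inflatingsuspensions} then gives, for each inflated summand,
\[
	\Sigma^{\sigma_L + \sigma_R} \KH \phi_L^* \ul{M}_L^* \simeq \phi_L^* \Sigma^{(\sigma_L+\sigma_R)^L} \CH \ul{M}_L^* \simeq \phi_L^* \Sigma^\sigma \CH \ul{M}_L^*,
\]
because $\sigma_R$ has trivial $L$-fixed subspace (as $L \cap R = e$ forces $L \to \K/R$ to be an isomorphism) while $\sigma_L$ is fixed by $L$ and corresponds to the sign representation $\sigma$ of $\K/L \iso C_2$. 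A symmetric argument handles the $\phi_R^*$ summand. The equivalence then reduces to the $C_2$-equivariant identity $\Sigma^\sigma \CH \ul{M}_L^* \simeq \Sigma^1 \CH \ul{M}_L$ (and its analog for $\ul{M}_R$), which is visible from \cref{fig:piC2F2}.

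The main obstacle I anticipate is tracking extension data in the case that $\ul{n}_D$ does not split as a direct sum of inflations but only fits into a nonsplit short exact sequence of such pieces. To resolve this, I would build a comparison map by applying $\Sigma^{\sigma_L + \sigma_R}$ to the defining sequence for $\ul{n}_D^*$ and $\Sigma^1$ to the corresponding sequence for $\ul{n}_D$, then verify the map is an equivalence by restricting to the proper subgroups: the restriction to $D$ is automatically an equivalence (both sides become contractible there after the $\sigma_D$-reduction above), and the restrictions to $L$ and $R$ reduce to the $C_2$-equivariant identity invoked in the previous step.
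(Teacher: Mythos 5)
Your reduction to the fiber sequence is essentially sound and runs parallel to the paper's argument (the paper gets there via the shearing cofiber sequences for $\sigma_L$ and then $\sigma_R$ rather than via \cref{eq:inflatingsuspensions} applied to a decomposition into inflations, but both routes produce the same fiber sequence $\Sigma^1 \KH \phi_L^* \ulf \to \Sigma^{\orho}\KH \ul{n}_D^* \to \Sigma^1 \KH \phi_R^*\ulF$). One imprecision worth flagging: the $C_2$-identity you invoke, $\Sigma^{\sigma}\CH \ul{M}^* \simeq \Sigma^1 \CH \ul{M}$, is not what \cref{fig:piC2F2} gives. The correct statements are $\Sigma^{\sigma}\CH \ulF^* \simeq \Sigma^1 \CH \ulf$ and $\Sigma^{\sigma}\CH \ulf \simeq \Sigma^1 \CH \ulF$; it is these that convert the pieces $\phi_L^*\ulF^*$ and $\phi_R^*\ulf$ of the extension $\phi_L^*\ulF^* \into \ul{n}_D^* \onto \phi_R^*\ulf$ into the pieces $\phi_L^*\ulf$ and $\phi_R^*\ulF$ of $\ul{n}_D$.

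The genuine gap is in your final verification step. Checking a map of $\K$-spectra on restrictions to the proper subgroups $L$, $D$, $R$ does not suffice to conclude it is an equivalence: the value at the orbit $\K/\K$ is invisible to these restrictions. Concretely, after the fiber sequence above, what remains is to decide whether the Mackey functor $\upi_1 \Sigma^{\orho}\KH\ul{n}_D^*$, an extension of $\phi_R^*\ulF$ by $\phi_L^*\ulf$, is $\ul{n}_D$ or the split extension $\phi_L^*\ulf \oplus \phi_R^*\ulF$. These two candidates differ only in the restriction map $\res^\K_L$ out of the $\K/\K$ level, and they have \emph{identical} restrictions to $L$, to $D$, and to $R$ as Mackey functors for those subgroups. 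So your proposed check cannot distinguish them, and the extension problem is left unresolved. (There is also the unaddressed issue of actually constructing the comparison map between the two fiber sequences, but that is secondary.) The paper closes exactly this gap by a symmetry argument: the quotient $\phi_R^*\ulF$ forces $\res^\K_R$ to be nonzero, and since the entire situation is symmetric under interchanging $L$ and $R$, the restriction $\res^\K_L$ must be nonzero as well, which pins down the extension as $\ul{n}_D$. You need this step, or some substitute for it (e.g.\ a computation of the $\K$-fixed or geometric fixed points), to complete the proof.
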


\begin{pf}
First, note that since $\ul{n}_D^*$ restricts to zero on $D$, it follows that the natural map $\KH \ul{n}_D^* \to \Sigma^{\sigma_D} \KH \ul{n}_D^*$ is an equivalence by \cref{cor:sigmaH suspensions of trivial H}. 
The cofiber sequence
\[
\begin{tikzcd}[row sep=3mm]
	\K/L_+ \wedge  \KH \ul{n}_D^* 
		\ar[r] 
		\ar[d,phantom,"\rotatebox{-90}{$\simeq$}" description]
		&
	\KH \ul{n}_D^* 
		\ar[r]
		&
	\Sigma^{\sigma_L} \KH \ul{n}_D^*
		\\
	\KH\left(\uparrow_L^\K\!\!\ul{f}\right)
\end{tikzcd}
\]
allows us compute the homotopy Mackey functors of $\Sigma^{\sigma_L} \KH \ul{n}_D^* \simeq \Sigma^{\sigma_L+\sigma_D} \KH \ul{n}_D^*$; {the nonzero homotopy Mackey functors are $\phi_L^*\ul{f}$ in degree one and $\phi_R^*\ul{f}$ in degree zero. This gives a Postnikov fiber sequence}
\[
	\Sigma^1 \KH \phi_L^* \ulf \to \Sigma^{\sigma_L+\sigma_D} \KH \ul{n}_D^* \to \KH \phi_R^* \ulf.
\]
Suspending by $\sigma_R$ then gives a fiber sequence
\[
\begin{tikzcd}[row sep=3mm]
	\Sigma^1 \KH \phi_L^* \ulf
		\ar[r]
		&
	\Sigma^{\orho} \KH \ul{n}_D^*
		\ar[r]
		&
	\Sigma^{\sigma_R} \KH \phi_R^* \ul{f}
		\ar[d,phantom,"\rotatebox{-90}{$\simeq$}" description]
		\\
		&
		&
	\Sigma^1 \KH \phi_R^* \ulF,
\end{tikzcd}
\]
where we have again used the fact that if the restriction of $X$ to $R$ is contractible, then $\Sigma^{\sigma_R}X \simeq X$ (\cref{cor:sigmaH suspensions of trivial H}).
It follows that $\Sigma^{\orho} \KH \ul{n}_D^*$ has homotopy concentrated in degree 1, with Mackey functor an extension of $\phi_R^* \ulF$ by $\phi_L^* \ulf$. The only potential ambiguity in the extension is the restriction from $K$ to $L$. But as the result must be symmetric in $L$ and $R$, we conclude the restriction to $L$ must be nontrivial since the restriction to $R$ is so. {Hence, this extension must be $\ul{n}_D^*$.}
\end{pf}

\begin{prop} 
\label{Srhon}
For $k\geq 1$, the nonzero homotopy Mackey functors of $\Sigma^{k\orho} \KH \ul{n}_D$ are
\[
	\upi_n \Sigma^{k\orho} \KH\ul{n}_D \iso \begin{cases}
	\phi_{LR}^* \underline{\bbF_2} & n= k \\
	\ulg^2 & n \in [1,k-1] \\
	\ulg & n=0.
	\end{cases}
\]
\end{prop}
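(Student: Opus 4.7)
The plan is to first establish the $k=1$ case mirroring the proof of \cref{Srhon*}, and then to suspend the resulting Postnikov fiber sequence by $(k-1)\orho$ and analyze the associated long exact sequence for $k \geq 2$.

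For the base case $k=1$: the Mackey functor $\ul{n}_D$ vanishes at $\K/D$, so \cref{cor:sigmaH suspensions of trivial H} gives $\KH \ul{n}_D \simeq \Sigma^{\sigma_D} \KH \ul{n}_D$, and therefore $\Sigma^\orho \KH \ul{n}_D \simeq \Sigma^{\sigma_L + \sigma_R} \KH \ul{n}_D$. Computing $\Sigma^{\sigma_L} \KH \ul{n}_D$ via the cofiber sequence
\[
\K/L_+ \wedge \KH \ul{n}_D \to \KH \ul{n}_D \to \Sigma^{\sigma_L} \KH \ul{n}_D,
\]
together with the shearing isomorphism identifying the fiber as $\KH(\uparrow_L^\K\! \res^\K_L \ul{n}_D)$, and then suspending by $\sigma_R$, should yield a Postnikov fiber sequence
\[
\Sigma^1 \KH \phi_{LR}^* \ulF \to \Sigma^\orho \KH \ul{n}_D \to \KH \ulg,
\]
which encodes the $k=1$ case. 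As in \cref{Srhon*}, $L$-$R$ symmetry will fix any ambiguity in identifying the top homotopy as $\phi_{LR}^* \ulF$ rather than some extension.

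For $k \geq 2$, suspend this fiber sequence by $(k-1)\orho$. Using $\Sigma^\orho \KH \ulg \simeq \KH \ulg$ from \cref{BackgroundSrhoHM} and \eqref{eq:inflatingsuspensions}, the result is
\[
\phi_L^* \Sigma^{1 + (k-1)\sigma} \CH \ulF \oplus \phi_R^* \Sigma^{1 + (k-1)\sigma} \CH \ulF \to \Sigma^{k\orho} \KH \ul{n}_D \to \KH \ulg.
\]
The homotopy of the fiber can be read off from the $C_2$-equivariant chart \cref{fig:piC2F2} via \eqref{eq:inflatinghomotopy}, and the associated long exact sequence then determines $\upi_n \Sigma^{k\orho} \KH \ul{n}_D$ in all remaining degrees.

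The main obstacle will be resolving nontrivial extensions in the long exact sequence. In particular, in the middle range $n \in [1, k-1]$ one must confirm that the answer is $\ulg^2$ rather than a nontrivial extension, and at $n=k$ one must rule out an additional $\ulg$ summand alongside $\phi_{LR}^* \ulF$. These should be pinned down by the $L$-$R$ symmetry of $\ul{n}_D$ and the module structure over the Burnside Mackey functor, analogously to the end of the proof of \cref{Srhon*}.
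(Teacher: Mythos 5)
Your proposal is correct, and for $k\geq 2$ it coincides with the paper's argument: suspend the $k=1$ Postnikov fiber sequence by $(k-1)\orho$, identify the fiber as $\phi_L^*\bigl(\Sigma^{1+(k-1)\sigma}\CH\ulF\bigr)\vee\phi_R^*\bigl(\Sigma^{1+(k-1)\sigma}\CH\ulF\bigr)$ via \eqref{eq:inflatingsuspensions}, and read off the answer from \cref{fig:piC2F2}. (Your worries about extensions in this range are vacuous: the fiber's homotopy lives in degrees $[1,k]$ and the cofiber $\KH\ulg$ contributes only in degree $0$, so the long exact sequence splits into isomorphisms; the $\ulg^2$ in the middle range is literally the homotopy of a wedge, not an extension.) Where you genuinely diverge from the paper is the base case. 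The paper obtains the fiber sequence $\Sigma^1\KH\phi_{LR}^*\ulF\to\Sigma^{\orho}\KH\ul{n}_D\to\KH\ulg$ in one step from the algebraic short exact sequence $\phi_{LR}^*\ulf\into\ul{n}_D\onto\ulg$ together with \cref{BackgroundSrhoHM}, with no extension problems at all. Your cell-by-cell route also works, but be careful when ``mirroring'' \cref{Srhon*}: the transfer $\tr_L^\K$ is an isomorphism in $\ul{n}_D^*$ but is \emph{zero} in $\ul{n}_D$, so the counit $\uparrow_L^\K\downarrow_L^\K\ul{n}_D\to\ul{n}_D$ has kernel $\phi_L^*\ulF$ and cokernel $\phi_R^*\ulF$ (not $\phi_L^*\ulf$ and $\phi_R^*\ulf$ as in the dual case). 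After suspending by $\sigma_R$ you then get a fiber sequence $\Sigma^1\KH\phi_L^*\ulF\to\Sigma^{\orho}\KH\ul{n}_D\to\Sigma^{\sigma_R}\KH\phi_R^*\ulF$, whose long exact sequence gives $\upi_0=\ulg$ for free but leaves $\upi_1$ as an extension of $\phi_R^*\ulF$ by $\phi_L^*\ulF$; this does need to be resolved (every such extension splits, since any lift of the top-level generator of $\phi_R^*\ulF$ can be corrected to restrict to zero at $\K/L$), after which you recover the Postnikov sequence you stated. So your approach costs an extra counit computation and one extension argument that the paper's algebraic decomposition avoids, but it has the virtue of not requiring you to have guessed the short exact sequence $\phi_{LR}^*\ulf\into\ul{n}_D\onto\ulg$ in advance.
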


\begin{pf}
We have a short exact sequence of Mackey functors $\phi_{LR}^* \ulf \into \ul{n}_D \onto \ulg$. 
Suspending by $\orho$ and use of \cref{BackgroundSrhoHM} give a cofiber sequence
\[
	\Sigma^1 \KH \phi_{LR}^* \ulF \to \Sigma^{\orho} \KH \ul{n} \to \KH \ulg.
\]
The homotopy of the higher $\orho$-suspensions follow by use of \zcref{eq:inflatingsuspensions} from the relevant $C_2$-equivariant computations, as in \cref{fig:piC2F2}.
\end{pf}

{Recall that $\ul{v}_D^*$ fits into a short exact sequence \zcref{eq:SESNbarDK} with middle term $\overline{\KND}$. Understanding its homotopy will be crucial for computing the homotopy of $\overline{\KND}$, which we use in turn to compute the homotopy of $\KND$.}

\begin{prop} 
\label{htpySkrhov*}
The nonzero homotopy Mackey functors of $\Sigma^{\orho} \KH \ul{v}_D^*$ are
\[
	\upi_n \Sigma^{\orho} \KH\ul{v}_D^* \iso \begin{cases}
	\ul{\F_2} & n=3 \\
	\phi_D^* \ul{f}  & n=2. \\
	\end{cases}
\]
For $k\geq 2$, the nonzero homotopy Mackey functors of $\Sigma^{k\orho} \KH \ul{v}_D^*$ are
\[
	\upi_n \Sigma^{k\orho} \KH\ul{v}_D^* \iso \begin{cases}
	\upi_n \Sigma^{k\orho} \KH \bbF_2 & n \geq k+2 \\
	\phi_D^* \ul{\bbF_2} \oplus \ulg^{2k-3} & n=k+1 \\
	\ulg^{2n-4} & n \in [3,k]. \\
	\end{cases}
\]
\end{prop}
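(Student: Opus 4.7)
The plan is to follow the template of \cref{Srhon*} and \cref{Srhon}: exhibit a short exact sequence of $\K$-Mackey functors presenting $\ul v_D^*$ in terms of Mackey functors whose $\orho$-suspensions are well-understood, apply $\Sigma^{k\orho} \KH$, and read off the homotopy from the associated long exact sequence, resolving any extensions by means of subgroup restrictions or symmetry considerations.

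For $k=1$, I would identify a short exact sequence $\ul A \into \ul v_D^* \onto \ul B$ in which one of $\ul A, \ul B$ is $\ulF^*$, so that its $\orho$-suspension yields $\Sigma^3\KH\ulF$ via \cite{GY}*{Proposition~4.2}, contributing the $\ulF$ in degree $3$. The other piece should be related to $\phi_D^*\ulF^*$, whose $\orho$-suspension is $\Sigma^1\KH\phi_D^*\ulf$ by \cref{BackgroundSrhoHM}(b), contributing the $\phi_D^*\ulf$ in degree $2$ after the dimension shift in the cofiber sequence. Such a decomposition can be extracted by examining the explicit Mackey functor values of $\ul v_D^*$ derived from \cref{figure:ReducednormLtoK} and \eqref{eq:SESNbarDK}. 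The associated long exact sequence then determines $\upi_*\Sigma^{\orho}\KH\ul v_D^*$ completely, with the $L \leftrightarrow R$ symmetry of $\ul v_D^*$ resolving any ambiguity in the Mackey functor structure.

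For $k \geq 2$, the $k=1$ computation yields a Postnikov fiber sequence
\[
\Sigma^3\KH\ulF \longrightarrow \Sigma^{\orho}\KH \ul v_D^* \longrightarrow \Sigma^2 \KH \phi_D^*\ulf.
\]
Suspending by $(k-1)\orho$ produces
\[
\Sigma^{3 + (k-1)\orho}\KH \ulF \longrightarrow \Sigma^{k\orho}\KH\ul v_D^* \longrightarrow \Sigma^{2+(k-1)\orho}\KH \phi_D^*\ulf.
\]
The cofiber simplifies via \cref{eq:inflatingsuspensions} to $\phi_D^* \Sigma^{2 + (k-1)\sigma}\CH\ulf$, whose homotopy is read off from the $C_2$-chart \cref{fig:piC2F2} and inflated using \eqref{eq:inflatinghomotopy}. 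The fiber is equivalent to $\Sigma^{k\orho}\KH\ulF^*$ by a second application of \cite{GY}*{Proposition~4.2}, whose homotopy is known (compare \cref{fig:HF}). The long exact sequence then determines $\upi_n\Sigma^{k\orho}\KH\ul v_D^*$: in degrees $n\geq k+2$ the cofiber vanishes and one recovers $\upi_n\Sigma^{k\orho}\KH\ulF$; in degree $n=k+1$ the $\phi_D^*\ulF$ summand arises as the top nontrivial homotopy of the cofiber, along with a contribution $\ulg^{2k-3}$ from the fiber; and in degrees $n \in [3,k]$ the $\ulg$-layers from both sides accumulate to $\ulg^{2n-4}$.

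The principal challenges will be (a) pinning down the correct short exact sequence for $\ul v_D^*$ in the $k=1$ step, which requires a careful orbit-by-orbit analysis based on \eqref{eq:SESNbarDK}, and (b) resolving the Mackey-functor extension problems arising in the long exact sequences, especially in degree $n=k+1$ where $\phi_D^*\ulF$ meets several copies of $\ulg$. These extensions are typically settled by restricting to the proper subgroups of $\K$ (using that $\phi_D^*\ulF$ is nonzero only at $\K/\K$ and $\K/D$, while $\ulg$ is concentrated at $\K/\K$) and by exploiting the $L \leftrightarrow R$ symmetry inherited from the Mackey functor $\ul v_D^*$.
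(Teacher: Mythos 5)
Your primary cofiber sequence is the same as the paper's: the short exact sequence $\phi_D^*\ulF^* \into \ulF^* \onto \ul{v}_D^*$ gives, after suspension and rotation, exactly your sequence $\Sigma^{3+(k-1)\orho}\KH\ulF \to \Sigma^{k\orho}\KH\ul{v}_D^* \to \Sigma^{2+(k-1)\orho}\KH\phi_D^*\ulf$, and your treatment of the $k=1$ case and of degrees $n\geq k+2$ is fine. (One small correction to your setup: there is no short exact sequence $\ul{A}\into\ul{v}_D^*\onto\ul{B}$ with $\ulF^*$ as a sub or quotient of $\ul{v}_D^*$, since $\ul{v}_D^*$ vanishes at $\K/\K$ while $\ulF^*$ does not; $\ul{v}_D^*$ is the \emph{quotient} of $\ulF^*$ by $\phi_D^*\ulF^*$. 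You land on the correct rotated sequence anyway.)

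The genuine gap is in degrees $n\in[3,k+1]$. There the long exact sequence of your single cofiber sequence reads, for $n\in[3,k]$,
\[
\ulg \xrightarrow{\;\partial\;} \ulg^{2n-5} \to \upi_n\Sigma^{k\orho}\KH\ul{v}_D^* \to \ulg \xrightarrow{\;\partial\;} \ulg^{2n-7},
\]
and in degree $k+1$ it gives an extension of (the kernel of $\partial$ on) $\phi_D^*\ulF$ by $\ulg^{2k-3}$. Nothing in this sequence forces the connecting maps between copies of $\ulg$ to vanish, nor forces the extensions to split rather than produce a $\Z/4$ at the level $\K/\K$. Your proposed tie-breakers cannot decide this: restriction to proper subgroups sees nothing, because $\ulg$ and all the ambiguities are concentrated at $\K/\K$; and the $L\leftrightarrow R$ symmetry does not distinguish $\ulg^2$ from $\Z/4$ concentrated at the top level, nor a zero from a nonzero connecting map $\ulg\to\ulg$. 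The paper resolves this with two additional inputs you are missing: the second short exact sequence $\phi_{LR}^*\ulf \into \ul{v}_D^* \onto \ulf$, whose associated cofiber sequence (with $\Sigma^{k\orho}\KH\ulf$ computed in \cite{GY}*{Corollary~7.5}) gives an independent constraint on $\upi_n\Sigma^{k\orho}\KH\ul{v}_D^*$; and \cref{DVanishesImplies2Torsion}, which uses the $n_D^\K\ulF$-module structure together with the vanishing of $\ul{v}_D^*$ at $\K/D$ to force the top-level groups to be $2$-torsion, killing the potential $\Z/4$ extensions. You need to add at least one of these (the paper uses both) to make the argument close.
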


\begin{pf}
We have a short exact sequence $\phi_D^* \ulF^* \into \ulF^* \onto \ul{v}_D^*$, which gives rise to a cofiber sequence
\[
\begin{tikzcd}[row sep=3mm]
	\Sigma^{\orho} \KH \phi_D^* \ulF^* 
		\ar[r] 
		&  
	\Sigma^{\orho} \KH \ulF^* 
		\ar[r] 
		& 
	\Sigma^{\orho} \KH \ul{v}_D^*. 
		\\
	\Sigma^1 \KH \phi_D^* \ulf 
		\ar[u,phantom,"\rotatebox{-90}{$\simeq$}" description] 
		& 
	\Sigma^3 \KH \ulF 
		\ar[u,phantom,"\rotatebox{-90}{$\simeq$}" description]
\end{tikzcd}
\]
We may rotate this and suspend further to get a cofiber sequence
\[
\Sigma^{3+(k-1)\orho} \KH \ulF \to \Sigma^{k\orho} \KH \ul{v}_D^* \to \Sigma^{2+(k-2)\orho}\KH \phi_D^* \ulF.
\]
{The homotopy Mackey functors of suspensions of $\KH \ulF$ can be read off from \cref{fig:HF}, and the homotopy Mackey functors of $\Sigma^{2+(k-2)\orho}\KH \phi_D^* \ulF$ follow from \zcref{eq:inflatingsuspensions} and \cref{fig:piC2F2}. }

On the other hand, we also have a short exact sequence $\phi_{LR}^* \ulf \into \ul{v}_D^* \onto \ulf$ which gives a cofiber sequence
\[
\begin{tikzcd}[row sep=3mm]
	\Sigma^{k\orho} \KH \phi_{LR}^* \ulf \ar[r] &  \Sigma^{k\orho} \KH \ul{v}_D^* \ar[r] &  \Sigma^{k\orho} \KH \ulf. 
	 \\
	 \Sigma^{1+(k-1)\orho} \KH \phi_{LR}^* \ulF
	 \ar[u,phantom,"\rotatebox{-90}{$\simeq$}" description] 
\end{tikzcd}
\]
The homotopy Mackey functors of the right term {of this last cofiber sequence} are computed in \cite{GY}*{Corollary~7.5}, {and the homotopy Mackey functors of $\Sigma^{1+(k-1)\orho} \KH \phi_{LR}^* \ulF$ follow from \zcref{eq:inflatingsuspensions} and \cref{fig:piC2F2}}. The stated homotopy Mackey functors {of $\Sigma^{k \orho} \KH \ul{v}_D^*$} are the only ones compatible with both long exact sequences and \cref{DVanishesImplies2Torsion}.
\end{pf}

\begin{prop}
\label{HtpySrhoNDK}
The nonzero homotopy Mackey functors of $\Sigma^{\orho} \KH N_D^\K \ulF$ are
\[
	\upi_n \Sigma^{\orho} \KH N_D^\K \ulF \iso
	\begin{cases}
	\underline{\bbF_2} & n =3 \\
	\phi_D^* \ul{f} & n=2 \\
	\phi_{D}^* \ul{f} \oplus \ul{n}_D & n=1 \\
	\ulg & n=0.
	\end{cases}
\]
\end{prop}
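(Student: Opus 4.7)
The plan is to apply $\Sigma^{\orho} \KH$ to the short exact sequences \cref{eq:SESNbarDK} and \cref{eq:SESNDK} in turn, and read off the answer from the resulting long exact sequences. The homotopy of each fiber and cofiber that arises has just been computed: \cref{htpySkrhov*} handles $\Sigma^{\orho} \KH \ul{v}_D^*$, \cref{Srhon*} handles $\Sigma^{\orho} \KH \ul{n}_D^*$, while \cref{BackgroundSrhoHM}(b) and (a) handle $\Sigma^{\orho} \KH \phi_D^* \ulF^*$ and $\Sigma^{\orho} \KH \ulg$.

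First I would apply $\Sigma^{\orho} \KH$ to \cref{eq:SESNbarDK} to produce the cofiber sequence
\[
	\Sigma^{\orho} \KH \ul{v}_D^* \to \Sigma^{\orho} \KH \overline{\KND} \to \Sigma^{\orho} \KH \bigl(\phi_D^* \ulF^* \oplus \ul{n}_D^*\bigr).
\]
By \cref{htpySkrhov*}, the fiber has nonzero homotopy Mackey functors only in degrees $3$ and $2$, with values $\ulF$ and $\phi_D^* \ulf$ respectively. By \cref{BackgroundSrhoHM}(b) together with \cref{Srhon*}, the cofiber has nonzero homotopy only in degree $1$, with value $\phi_D^* \ulf \oplus \ul{n}_D$. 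Since these supports are disjoint, every connecting homomorphism in the long exact sequence vanishes for degree reasons, and there are no extension problems. This determines $\upi_n \Sigma^{\orho} \KH \overline{\KND}$ to be $\ulF$ for $n=3$, $\phi_D^* \ulf$ for $n=2$, $\phi_D^* \ulf \oplus \ul{n}_D$ for $n=1$, and zero otherwise.

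Second, I would apply $\Sigma^{\orho} \KH$ to \cref{eq:SESNDK} to produce the cofiber sequence
\[
	\Sigma^{\orho} \KH \overline{\KND} \to \Sigma^{\orho} \KHND \to \Sigma^{\orho} \KH \ulg.
\]
By \cref{BackgroundSrhoHM}(a), the cofiber $\KH\ulg$ has homotopy concentrated in degree $0$ with value $\ulg$, while the first step shows the fiber has homotopy concentrated in degrees $1$, $2$, and $3$. Disjoint supports again force all connecting maps to vanish and rule out extension issues, and the long exact sequence delivers the claimed Mackey functors $\ulg$, $\phi_D^* \ulf \oplus \ul{n}_D$, $\phi_D^* \ulf$, and $\ulF$ in degrees $0$, $1$, $2$, and $3$ respectively.

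I do not anticipate any substantive obstacle, since the disjointness of the supports of the fiber and cofiber in each long exact sequence is precisely what is needed to bypass both the computation of connecting maps and the resolution of extensions.
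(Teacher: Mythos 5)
Your proposal is correct and follows the same route as the paper: suspend \cref{eq:SESNbarDK} to compute $\Sigma^{\orho}\KH\overline{\KND}$ using \cref{htpySkrhov*}, \cref{Srhon*}, and \cref{BackgroundSrhoHM}, then suspend \cref{eq:SESNDK} to assemble $\Sigma^{\orho}\KHND$. Your explicit observation that the disjoint supports of fiber and cofiber homotopy kill all connecting maps and extension problems is exactly the (implicit) content of the paper's "the result now follows by combining" step.
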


\begin{pf}
Suspending \eqref{eq:SESNDK} gives a cofiber sequence 
\[
\begin{tikzcd}[row sep=3mm]
	\Sigma^{\orho} \KH \overline{\KND}
		\ar[r]
		&
	\Sigma^{\orho} \KHND 
		\ar[r]
		&
	\Sigma^{\orho} \KH \ulg 
		\\
		&
		&
	\KH \ulg.
		\ar[u,phantom,"\rotatebox{-90}{$\simeq$}" description]
\end{tikzcd}
\]
It therefore suffices to compute the homotopy Mackey functors of $\Sigma^{\orho} \KH \overline{\KND}$, for which we use the following suspension of \eqref{eq:SESNbarDK}:
\[
\Sigma^{\orho} \KH \ul{v}_D^* \to \Sigma^{\orho} \KH \overline{\KND} \to \Sigma^{\orho} \KH \big( \phi_D^* \underline{\F_2}^* \oplus \ul{n}_D^* \big).
\]
The result now follows by combining \Cref{htpySkrhov*,Srhon*,BackgroundSrhoHM}.
\end{pf}

The homotopy of the higher suspensions is as follows.

\begin{theorem} For $k\geq 2$, the nonzero homotopy Mackey functors of $\Sigma^{k\orho} \KH N_D^\K \ulF$ are
\[
	\upi_n \Sigma^{k\orho} \KH N_D^\K \ulF \iso
	\begin{cases}
	\upi_n \Sigma^{k\orho} \KH \underline{\bbF_2} & n \geq k+2 \\
	\phi_D^* \underline{\bbF_2} \oplus \ulg^{2k-3} & n=k+1 \\
	\phi_{LDR}^* \underline{\bbF_2} \oplus \ulg^{2k-4} & n=k \\
	\ulg^{2n-1} & n \in [2,k-1] \\
	\ulg & n \in [0,1].
	\end{cases}
\]
\end{theorem}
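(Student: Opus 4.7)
The plan is to follow the proof of the $k=1$ case in \cref{HtpySrhoNDK} closely, using higher $\Sigma^{k\orho}$-suspensions of the same two short exact sequences. First, apply $\Sigma^{k\orho}\KH(-)$ to \eqref{eq:SESNDK}. Since $\Sigma^{k\orho}\KH\ulg \simeq \KH\ulg$ by \cref{BackgroundSrhoHM}(a), this yields a cofiber sequence
\[
\Sigma^{k\orho}\KH\overline{\KND} \to \Sigma^{k\orho}\KHND \to \KH\ulg,
\]
which reduces the problem to computing $\upi_*\Sigma^{k\orho}\KH\overline{\KND}$, with a possible contribution of $\ulg$ in dimension $0$ coming from the right-hand term.

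Next, apply $\Sigma^{k\orho}\KH(-)$ to \eqref{eq:SESNbarDK} to obtain
\[
\Sigma^{k\orho}\KH\ul{v}_D^* \to \Sigma^{k\orho}\KH\overline{\KND} \to \Sigma^{k\orho}\KH\phi_D^*\ulF^* \, \vee \, \Sigma^{k\orho}\KH\ul{n}_D^*.
\]
The homotopy of the fiber is provided by \cref{htpySkrhov*}. For the cofiber, rewrite the first summand using \cref{BackgroundSrhoHM}(b) as $\Sigma^{1+(k-1)\orho}\KH\phi_D^*\ulf$, whose homotopy follows from \cref{eq:inflatingsuspensions} together with the $C_2$-chart \cref{fig:piC2F2}. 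Rewrite the second summand via \cref{Srhon*} as $\Sigma^{1+(k-1)\orho}\KH\ul{n}_D$, whose homotopy follows from \cref{Srhon} applied to the $(k-1)$-fold $\orho$-suspension. Assembling these two long exact sequences then yields the additive structure of $\upi_n\Sigma^{k\orho}\KHND$ for each $n$.

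The main obstacle will be the resulting extension problems, particularly in the range $n \in \{k, k+1\}$ where several Mackey functors interact nontrivially. I plan to resolve these using three tools: (i) \cref{DVanishesImplies2Torsion}, which forces the top level of any $n_D^\K\ulF$-module summand vanishing at $\K/D$ to be $2$-torsion, thereby constraining possible extensions involving $\phi_{LR}^*\ulF$-type pieces; (ii) the $\mathrm{Aut}(\K)$-automorphism fixing $D$ and swapping $L\leftrightarrow R$, which enforces $L$--$R$ symmetry of each extension; and (iii) for $n \geq k+2$, direct identification with $\upi_n\Sigma^{k\orho}\KH\ulF$ (whose values are recorded in \cref{fig:HF}), obtained by observing that the fiber $\Sigma^{k\orho}\KH\overline{\KND}$-minus-top becomes sufficiently connective in that range. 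Where these constraints still leave ambiguity—most notably for the mixed piece $\phi_{LDR}^*\ulF \oplus \ulg^{2k-4}$ at $n=k$—I will mimic the argument of \cref{thm:PInSigmakrhoHKN}: compare with the alternative cofiber sequence obtained from $\ulF^* \into \KND \onto \KND/\ulF^*$, so that the desired Mackey functor is simultaneously a quotient and a subobject of known Mackey functors, forcing a unique extension.
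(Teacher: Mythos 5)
Your skeleton matches the paper's: reduce via \eqref{eq:SESNDK} to $\overline{\KND}$, then run the long exact sequence of $\Sigma^{k\orho}\KH \ul{v}_D^* \to \Sigma^{k\orho}\KH\overline{\KND} \to \Sigma^{k\orho}\KH(\phi_D^*\ulF^*\oplus\ul{n}_D^*)$, reading off the fiber from \cref{htpySkrhov*} and the cofiber from \cref{Srhon*}, \cref{Srhon}, and \cref{eq:inflatingsuspensions}. Since the fiber has homotopy only in degrees $\geq 3$ and the cofiber only in degrees $\leq k$, this already settles $n>k$ and $n<3$ with no extension problems; note in particular that $n=k+1$ is \emph{not} ambiguous, contrary to where you locate the difficulty. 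The genuinely undetermined range is $n\in[3,k]$, which grows with $k$: there one must show certain connecting maps (e.g.\ $\phi_D^*\ulF\oplus\phi_{LR}^*\ulF \to \ulg^{2}$ in degree $k$) vanish and that the resulting extensions split. Your tools (i) and (ii) cannot do this: every Mackey functor in sight is already $2$-torsion, so \cref{DVanishesImplies2Torsion} gives nothing, and there are plenty of nonzero $L$--$R$-symmetric maps $\phi_{LDR}^*\ulF\to\ulg^2$ (a map into $\ulg$ is just a linear functional on the top level killing transfers, and the transfers into $\phi_{LDR}^*\ulF(\K/\K)$ are zero).

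The step you propose for closing this gap fails outright: there is no short exact sequence $\ulF^*\into\KND\onto\KND/\ulF^*$, because $\ulF^*$ does not embed in $N_D^\K\ulF$. Indeed $\ulF^*$ is generated at the bottom level with all transfers isomorphisms, while $\tr_e^D=0$ in $N_D^\K\ulF$ (see \cref{figure:normLtoK}); hence any Mackey functor map $\ulF^*\to\KND$ vanishes at the $\K/D$ and $\K/\K$ levels. The analogue of $\ulF^*\into\KN$ that the paper actually uses here is the sequence $\ulg\oplus\ul{n}_D\into\KND\onto\ulF$ (with quotient $\ulF$, not a sub $\ulF^*$): its rotated long exact sequence exhibits $\upi_n\Sigma^{k\orho}\KHND$ as a sub of $\upi_n\Sigma^{k\orho}\KH\ulF$ up to a $\ulg$-power correction, and playing the two long exact sequences against each other (one bounding the top-level dimension from above, the other from below) forces both the vanishing of the connecting maps and the splitting of the extensions. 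Replace your non-existent auxiliary sequence with this one and the argument goes through as in \cref{thm:PInSigmakrhoHKN}.
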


\begin{pf}
The argument is the same as for \cref{HtpySrhoNDK}, using 
the cofiber sequence
\[
\Sigma^{k\orho} \KH \ul{v}_D^* \to \Sigma^{k\orho} \KH \overline{\KND} \to \Sigma^{k\orho} \KH \big( \phi_D^* \underline{\F_2}^* \oplus \ul{n}_D^* \big).
\]
for $k\geq 2$. The left term has homotopy in degrees at least 3, while the right term has homotopy in degrees at most $k$. Therefore the homotopy Mackey functors in degrees greater than $k$ follow from \cref{htpySkrhov*}. And the homotopy below degree 3 follows from \cref{Srhon}.

For the Mackey functors in degrees $k$ and lower, we employ a similar argument to that used in the proof of \cref{thm:PInSigmakrhoHKN}. Consider, for instance, the case $k=4$, where the Mackey functors $\upi_3$ and $\upi_4$ remain to be determined. In the relevant degrees, the long exact sequence arising from the above cofiber sequence takes the form
\[
\begin{tikzcd}[arrows={thick,}]
		& 
	\upi_n \Sigma^{4\orho} \KH \ul{v}_D^* 
		& 
	\upi_n \Sigma^{4\orho} \KH \overline{\KND} 
		& 
	\upi_n \Sigma^{4\orho} \KH (\phi_D^* \ul{\F_2}^* \oplus \ul{n}_D^*) 
		\\
 	n=5 
		\ar[r, phantom, ""{coordinate, name=X}]
		\ar[from=X, to=2-2, dashed]
		& 
	\phi_D^* \ul{\F_2} \oplus \ulg^5  \ar[r,"\cong"] 
		& 
	\phi_D^* \ul{\F_2} \oplus \ulg^5  
		\ar[r, "0"]
		\ar[d, phantom, ""{coordinate,name=A}]
		& 
	0	\ar[dll, rounded corners,
			to path={ -- ([xshift=12mm]\tikztostart.east)
					  |- (A)
					  -| ([xshift=-5mm]\tikztotarget.west)
					  -- (\tikztotarget)}]
		\\
	n=4 
		& 
	\ulg^4 \ar[r,hookrightarrow] 
		& 
	? \ar[r] 
		\ar[d, phantom, ""{coordinate,name=B}]
		& 
	\phi_{D}^* \ul{\F_2} \oplus \phi_{LR}^*\ul{\F_2}
		\ar[dll, rounded corners,
			to path={ -- ([xshift=2mm]\tikztostart.east)
					  |- (B)
					  -| ([xshift=-5mm]\tikztotarget.west)
					  -- (\tikztotarget)}]
		\\
	n=3 
		& 
	\ulg^2 \ar[r]  
		& 
	?? \ar[r,->>] 
		\ar[d,phantom,""{coordinate,name=C}]
		& 
	\ulg \oplus \ulg^2
		\ar[dll, rounded corners,
			to path={ -- ([xshift=9mm]\tikztostart.east)
					  |- (C)
					  -| ([xshift=-5mm]\tikztotarget.west)
					  -- (\tikztotarget)}]
		\\
	n=2 
		& 
	0 
		\ar[r, "0"]
		& 
	\ulg^3 
		\ar[r,"\cong"] 
		& 
	\ulg \oplus \ulg^2
		\ar[r, phantom, ""{coordinate,name=Y}]
		\ar[r, to=Y, dashed]
		&
		\phantom{x}
\end{tikzcd}
\]
On the other hand, the long exact sequence induced by (the rotation of) the cofiber sequence 
for $\ulg \oplus \ul{n}_D \into \KND \onto \ulF$
takes the form
\[
\begin{tikzcd}[arrows={thick,}]
		&
	\upi_n \Sigma^{4\orho} \KHND
		& 
	\upi_n \Sigma^{4\orho} \KH \ulF 
		& 
	\upi_n \Sigma^{1+4\orho} \KH (\ulg \oplus \ul{n}_D) 
		\\
	n=5 
		\ar[r, phantom, ""{coordinate,name=X}]
		\ar[from=X,to=2-2,dashed]
		& 
	\phi_D^* \ulF \oplus \ulg^5 
		\ar[r,hookrightarrow] 
		& 
	\phi_{LDR}^* \ulF \oplus \ulg^5 
		\ar[r,->>] 
		\ar[d, phantom, ""{coordinate, name=A}]
		& 
	\phi_{LR}^* \ulF
		\ar[dll, rounded corners,
			to path = { -- ([xshift=1.6mm]\tikztostart.east)
						|- (A)
						-| ([xshift=-6mm]\tikztotarget.west)
						-- (\tikztotarget)
					}]
		\\
	n=4 
		& 
	? 
		\ar[r,hookrightarrow] 
		&
	\phi_{LDR}^* \ulF \oplus \ulg^6 
		\ar[r] 
		\ar[d, phantom, ""{coordinate, name=B}]
		& 
	\ulg^2
		\ar[dll, rounded corners,
			to path = { -- ([xshift=5mm]\tikztostart.east)
						|- (B)
						-| ([xshift=-5mm]\tikztotarget.west)
						-- (\tikztotarget)
					}]
		\\
	n=3 
		& 
	?? 
		\ar[r] 
		& 
	\ulg^7 
		\ar[r,two heads] 
		\ar[d, phantom, ""{coordinate,name=C}]
		& 
	\ulg^2
		\ar[dll, rounded corners,
			to path = { -- ([xshift=5mm]\tikztostart.east)
						|- (C)
						-| ([xshift=-5mm]\tikztotarget.west)
						-- (\tikztotarget)
					}]
		\\
	n=2 
		& 
	\ulg^3 
		\ar[r,hookrightarrow] 
		& 
	\ulg^5 
		\ar[r,two heads] 
		& 
	\ulg^2
		\ar[r, phantom, ""{coordinate,name=Y}]
		\ar[r,to=Y,dashed]
		&
	\phantom{x}
 \end{tikzcd}
\]
The claimed values for $\upi_4$ and $\upi_3$ are the only possibilities compatible with both long exact sequences. 
{Note that although one sequence computes the homotopy of $\KND$ and the other computes the homotopy of $\overline{\KND}$, these agree in positive degree as in the proof of \cref{HtpySrhoNDK}.}
The argument works just as well for larger values of $k$.
\end{pf}

We display these results in the fourth quadrant of \cref{fig:HND}. {The region of the fourth quadrant of \cref{fig:HND} where the homotopy agrees with $\KH \ulF$ is highlighted in blue.}

\subsection{The Negative Cone of \texorpdfstring{$\KH N_D^{\K} \ulF$}{$HN_D$}}
\label{sec:PinknegorhoHND}

We now compute the negative cone with coefficients in $\KND = N_D^\K \ulF$, which we display in the second quadrant of \cref{fig:HND}.

\begin{prop}
\label{HtpySnegrhoNDK}
The nonzero homotopy Mackey functors of $\Sigma^{-\orho} \KH N_D^\K \ulF$ are
\[
	\upi_{-n} \Sigma^{-\orho} \KH N_D^\K \ulF \iso
	\begin{cases}
	\underline{\bbF_2}^* & n =3 \\
	\ul{n}_D^* & n=1 \\
	\ulg & n=0.
	\end{cases}
\]
\end{prop}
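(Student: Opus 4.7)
The plan parallels the positive-cone computation in \cref{HtpySrhoNDK}. First, apply $\Sigma^{-\orho}\KH$ to the short exact sequence \eqref{eq:SESNDK} to obtain a cofiber sequence
\[
\Sigma^{-\orho}\KH\overline{\KND} \to \Sigma^{-\orho}\KHND \to \Sigma^{-\orho}\KH\ulg \simeq \KH\ulg,
\]
where the last equivalence follows from \eqref{eq:inflatingsuspensions} since $\orho^\K = 0$. This contributes $\ulg$ in degree $0$ and reduces the problem to the computation of $\Sigma^{-\orho}\KH\overline{\KND}$.

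Next, apply $\Sigma^{-\orho}\KH$ to \eqref{eq:SESNbarDK} to obtain the cofiber sequence
\[
\Sigma^{-\orho}\KH\ul{v}_D^* \to \Sigma^{-\orho}\KH\overline{\KND} \to \Sigma^{-\orho}\KH\big(\phi_D^*\ulF^* \oplus \ul{n}_D^*\big),
\]
and compute each of the three terms. The inflation $\Sigma^{-\orho}\KH\phi_D^*\ulF^*$ is determined by the inflation formula \eqref{eq:inflatingsuspensions}, reducing to $C_2$-equivariant data visible in \cref{fig:piC2F2} and transported back via \eqref{eq:inflatinghomotopy}. The term $\Sigma^{-\orho}\KH\ul{n}_D^*$ is handled by mimicking \cref{Srhon*}: since $\ul{n}_D^*$ restricts trivially to $D$, \cref{cor:sigmaH suspensions of trivial H} kills the $-\sigma_D$-suspension, and the remaining $-\sigma_L - \sigma_R$ suspensions are assembled from the shearing cofiber sequences together with the identifications in \cref{BackgroundSrhoHM}. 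Finally, for $\Sigma^{-\orho}\KH\ul{v}_D^*$, apply $\Sigma^{-\orho}\KH$ to $\phi_D^*\ulF^* \into \ulF^* \onto \ul{v}_D^*$; the middle term is a shifted suspension of $\KH\ulF$ via \cite{GY}*{Proposition~4.2}, whose homotopy is read off \cref{fig:HF}, while the other term has just been computed.

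Combining these in the long exact sequence for $\Sigma^{-\orho}\KH\overline{\KND}$ and feeding the result back into the original cofiber sequence produces the claimed Mackey functors: $\ulg$ in degree $0$, $\ul{n}_D^*$ in degree $-1$, and $\ulF^*$ in degree $-3$. The principal obstacle will be verifying that the intermediate degree $-2$ truly vanishes and that any extension ambiguity in degree $-1$, where the $\ul{n}_D^*$ piece could in principle interact with a connecting contribution from the $\KH\ulg$ quotient, splits correctly. Following the strategy of the higher-$k$ positive-cone arguments, I plan to pin these down by comparing against a second long exact sequence coming from an alternative presentation of $\KND$ (for example, the inclusion $\ulg \oplus \ul{n}_D \into \KND$), supplemented by the $2$-torsion constraint of \cref{DVanishesImplies2Torsion} applied to the $\KND$-module structure on each $\Sigma^{-\orho}\KH\KND$.
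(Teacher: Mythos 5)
Your outline would get there, but it takes a genuinely longer route than the paper, and the ``backup'' you mention in passing is in fact the entire proof. The paper applies $\Sigma^{-\orho}\KH$ to the single short exact sequence $\ulg \oplus \ul{n}_D \into \KND \onto \ulF$ and identifies the outer terms by desuspending known equivalences: $\Sigma^{-\orho}\KH(\ulg\oplus\ul{n}_D) \simeq \KH\ulg \vee \Sigma^{-1}\KH\ul{n}_D^*$ (from \cref{Srhon*}, rearranged as $\Sigma^{-\orho}\KH\ul{n}_D \simeq \Sigma^{-1}\KH\ul{n}_D^*$) and $\Sigma^{-\orho}\KH\ulF \simeq \Sigma^{-3}\KH\ulF^*$ (from \cite{GY}*{Proposition~4.2}). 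The fiber then has homotopy only in degrees $0$ and $-1$ while the cofiber has homotopy only in degree $-3$, so the long exact sequence degenerates completely: there are no connecting maps to analyze, no extension in degree $-1$, and the vanishing in degree $-2$ is automatic. By contrast, your primary route through \eqref{eq:SESNDK} and \eqref{eq:SESNbarDK} forces you to compute $\Sigma^{-\orho}\KH\phi_D^*\ulF^*$ and $\Sigma^{-\orho}\KH\ul{n}_D^*$, both of which contribute in degrees $0$ and $-1$ where $\Sigma^{-\orho}\KH\ul{v}_D^*$ and the $\KH\ulg$ quotient also interact; since the final answer has nothing at the $\K/D$ level in degree $-1$, the $\phi_D^*$ contributions must be killed by nontrivial connecting homomorphisms that you would have to identify explicitly. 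That is more delicate than your proposal acknowledges, though it is resolvable exactly by the comparison you name at the end. In short: promote your fallback to the main argument and the proof collapses to three lines.
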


\begin{pf}
The short exact sequence of Mackey functors $\ulg \oplus \ul{n}_D \into \KND \onto \ulF$ gives a cofiber sequence 
\[
\begin{tikzcd}[row sep=3mm]
	\Sigma^{-\orho} \KH \big(\ulg \oplus \ul{n}_D\big) 
		\ar[r] 
		&  
	\Sigma^{-\orho} \KHND 
		\ar[r] 
		& 
	\Sigma^{-\orho} \KH \ulF 
		\\
	\KH \ulg \vee \Sigma^{-1} \KH \ul{n}_D^*  
		\ar[u,phantom,"\rotatebox{-90}{$\simeq$}" description]	
		& 
		& 
	\Sigma^{-3} \KH \ulF^* 
		\ar[u,phantom,"\rotatebox{-90}{$\simeq$}" description]
\end{tikzcd}
\]
according to \cref{Srhon*}. {The result follows from the associated long exact sequence.}
\end{pf}

\begin{theorem} For $k\geq 2$, the nonzero homotopy Mackey functors of $\Sigma^{-k\orho} \KH N_D^\K \ulF$ are
\[
	\upi_{-n} \Sigma^{-k\orho} \KH N_D^\K \ulF \iso
	\begin{cases}
	\upi_{-n} \Sigma^{-k\orho} \KH \underline{\bbF_2} & n \geq k+1 \\
	\phi_{LR}^* \underline{\bbF_2}^* \oplus \ulg^{2k-5} & n=k \\
	\ulg^{2n-3} & n \in [3,k-1] \\
	\ulg^2 & n = 2 \\
	\ulg & n = 0,1.
	\end{cases}
\]
\end{theorem}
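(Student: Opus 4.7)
The plan is to dualize the positive-cone argument, using two independent short exact sequences of Mackey functors whose associated long exact sequences together pin down the unknown homotopy Mackey functors. First I would compute the homotopy of the negative $\orho$-suspensions of the relevant building blocks: $\Sigma^{-k\orho} \KH \ul{n}_D$ (via an argument dual to \cref{Srhon}, starting from $\phi_{LR}^* \ulf \into \ul{n}_D \onto \ulg$); $\Sigma^{-k\orho} \KH \ul{v}_D^*$ (via an argument dual to \cref{htpySkrhov*}, using the short exact sequences $\phi_D^* \ulF^* \into \ulF^* \onto \ul{v}_D^*$ and $\phi_{LR}^* \ulf \into \ul{v}_D^* \onto \ulf$); and the inflations $\phi_H^* \CH \ulF$ and $\phi_H^* \CH \ulf$, whose $(-k\orho)$-suspensions can be read off from \cref{eq:inflatinghomotopy,eq:inflatingsuspensions} together with the $C_2$-chart \cref{fig:piC2F2}.

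With these in hand, the cofiber sequence
\[
\Sigma^{-k\orho} \KH \big(\ulg \oplus \ul{n}_D\big) \to \Sigma^{-k\orho} \KHND \to \Sigma^{-k\orho} \KH \ulF,
\]
combined with the identification $\Sigma^{-k\orho} \KH \ulF \simeq \Sigma^{-3-(k-1)\orho} \KH \ulF^*$, should determine the answer directly in the ranges $n \geq k+1$ (where the fiber has no homotopy) and $n \in \{0,1\}$ (where the cofiber has no homotopy). In particular, this recovers the claim that $\upi_{-n} \Sigma^{-k\orho} \KHND$ agrees with $\upi_{-n}\Sigma^{-k\orho} \KH \ulF$ in the deep range.

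For the intermediate range $n \in [2,k]$, both ends of the long exact sequence contribute simultaneously, so the above cofiber sequence alone is insufficient. To resolve this, I would also use the cofiber sequence
\[
\Sigma^{-k\orho} \KH \ul{v}_D^* \to \Sigma^{-k\orho} \KH \overline{\KND} \to \Sigma^{-k\orho} \KH \big(\phi_D^* \ulF^* \oplus \ul{n}_D^*\big),
\]
exactly as in the proof of \cref{thm:PInSigmakrhoHKN} for the positive cone, noting that $\Sigma^{-k\orho} \KH \overline{\KND}$ and $\Sigma^{-k\orho} \KHND$ agree in the relevant degrees. The expectation is that the only Mackey functors simultaneously compatible with both long exact sequences are those listed. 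The hard part will be the extension problem at $n=k$, where the answer $\phi_{LR}^* \ulF^* \oplus \ulg^{2k-5}$ must be distinguished from twisted alternatives; here \cref{DVanishesImplies2Torsion}, applied to the $\KND$-module structure on $\upi_{-k}$, should be essential in ruling out extensions that are not $2$-torsion at the top level, just as it was in the proof of \cref{htpySkrhov*}.
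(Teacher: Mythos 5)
Your proposal is sound and reaches the theorem, but it diverges from the paper's proof in the choice of the auxiliary cofiber sequence used to resolve the intermediate degrees $-n$ for $n\in[3,k]$. Both arguments begin identically: the cofiber sequence $\Sigma^{-k\orho}\KH(\ulg\oplus\ul{n}_D)\to\Sigma^{-k\orho}\KHND\to\Sigma^{-k\orho}\KH\ulF$, the identification of the third term with $\Sigma^{-3-(k-1)\orho}\KH\ulF^*$, and the observation that the fiber is $(-k)$-connective while the cofiber is $(-3)$-coconnective, which settles $n\geq k+1$ and $n\leq 2$. (For the fiber the paper does not redo the SES argument for $\ul{n}_D$ as you suggest; it uses \cref{Srhon*} to rewrite $\Sigma^{-k\orho}\KH\ul{n}_D\simeq\Sigma^{-1-(k-1)\orho}\KH\ul{n}_D^*$ and then quotes Brown--Comenetz duality against \cref{Srhon}, since $\ul{n}_D$ and $\ul{n}_D^*$ are dual. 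Your direct route works too but is redundant.) For the intermediate range, you dualize the positive-cone argument by suspending \cref{eq:SESNbarDK} negatively, which requires the extra computation of $\Sigma^{-k\orho}\KH\ul{v}_D^*$ (itself a nontrivial dualization of \cref{htpySkrhov*}). The paper instead uses the second cofiber sequence
\[
	\Sigma^{-k\orho}\KH\bigl(\ulg\oplus\phi_D^*\ulF^*\bigr)\to\Sigma^{-k\orho}\KHN\to\Sigma^{-k\orho}\KHND,
\]
comparing against the negative cone of $\KHN$ already computed in \cref{sec:PinknegorhoHN}; this trades your auxiliary $\ul{v}_D^*$ computation for a citation of earlier work and keeps the bookkeeping lighter. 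Your identification of where \cref{DVanishesImplies2Torsion} enters (the top level of $\upi_{-k}$, whose $D$-level vanishes) matches the paper exactly. The one thing you must still verify for your route is that your two long exact sequences genuinely close every extension in $[3,k]$, not just at $n=k$; this is plausible by analogy with \cref{thm:PInSigmakrhoHKN}, but it is the load-bearing step and should be checked degree by degree as the paper does in its $k=4$ illustration.
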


\begin{pf}
As in the proof of \cref{HtpySnegrhoNDK}, we have a cofiber sequence 
\[
\begin{tikzcd}[row sep=3mm]
	\Sigma^{-k\orho} \KH \big(\ulg \oplus \ul{n}_D\big) 
		\ar[r] 
		&  
	\Sigma^{-k\orho} \KHND 
		\ar[r] 
		& 
	\Sigma^{-k\orho} \KH \ulF 
		\\
	\KH \ulg \vee \Sigma^{-1-(k-1)\orho} \KH \ul{n}_D^*
		\ar[u,phantom,"\rotatebox{-90}{$\simeq$}" description] 
		& 
		& 
	\Sigma^{-3-(k-1)\orho} \KH \ulF^* 
		\ar[u,phantom,"\rotatebox{-90}{$\simeq$}" description]
\end{tikzcd}
\]
The homotopy Mackey functors of $\Sigma^{-k\orho} \KH \ul{n}_D^*$ are the duals of those given in \cref{Srhon} by Brown-Comenetz duality. Thus $\Sigma^{-k\orho} \KH (\ulg \oplus \ul{n}_D)$ is $(-k)$-connective, so that the homotopy Mackey functors of $\Sigma^{-k\orho} \KHND$ agree with those of $\Sigma^{-k\orho} \KH \ulF$ below degree $-k$. Similarly, $\Sigma^{-k\orho} \KH \ulF$ is $(-3)$-coconnective, so that the homotopy Mackey functors of $\Sigma^{-k\orho} \KHND$ follow in degree $-2$ or higher. The intermediate homotopy Mackey functors are the only ones compatible with \cref{DVanishesImplies2Torsion} and the long exact sequences arising from the above cofiber sequence as well as the cofiber sequence
$\Sigma^{-k\orho}\KH \big(\ulg \oplus \phi_D^* \ulF^*\big) \to \Sigma^{-k\orho} \KHN \to \Sigma^{-k\orho} \KHND.$
\end{pf}

{We display these results in the second quadrant of \cref{fig:HND}. The region of the second quadrant of \cref{fig:HND} where the homotopy agrees with $\KH \ulF$ is highlighted in red.}

\section{Multiplicative Structure}
\label{sec:mult}

We briefly describe some of the multiplicative structure in the bigraded Green functors $\upi_{\blacklozenge} \KHN$ and $\upi_{\blacklozenge} \KHND$.

The $RO(\K)$-graded Green ring $\pi_\bigstar^\K \KH \ulF$ was described in \cite{EB} and \cite{HS}. 
The portion graded by honest, as opposed to virtual, representations
\footnote{This portion is called the ``positive cone'' in \cite{EB}, which is slightly different from our usage of the term in this article.}
is described in \cite{EB}*{Theorem~4.14} as
\[
	\pi_{\bigstar\geq0}^\K \KH \ulF \iso \frac{\F_2[a_L,a_D,a_R,t_L,t_D,t_R]}{a_Lt_Dt_R + t_L a_D t_R + t_L t_D a_R}
\]
where $a_H$ is the Euler class for $\sigma_H$, in degree $-\sigma_H$, and $t_H$ is the orientation class, in degree $1-\sigma_H$.
We have chosen to focus on the $\mathrm{Aut}(\K)$-invariant subring $\pi_{\blacklozenge\geq0}^\K \KH \ulF$ and similarly for our other Eilenberg--Mac~Lane spectra. It follows that this bigraded ring can be described as 
\[
	\pi_{\blacklozenge\geq0}^\K \KH \ulF \iso {\F_2[a,\xL,\xD,\xR,\vL,\vR,\uu]}/{I},
\]
where the generators are
\[
	\xL = t_L a_D a_R, \qquad \xD = a_L t_D a_R, \qquad \xR = a_L a_D t_R,
\]
\[
	\vL = a_L t_D t_R, \qquad \vR = t_L t_D a_R, \qquad \uu = t_L t_D t_R,
\]
and $I$ is the ideal generated by
\[ \begin{split}
\xL\xD + a \vR, \qquad \xD\xR + a \vL, \qquad \xL \xR + a\vL + a \vR, \\
\xL\vL + a \uu, \qquad \vR \xR + a \uu, \qquad \vL \xD + \xD \vR + a \uu, \\
\vL^2 + \xD \uu + \xR \uu, \qquad \vL \vR + \xD \uu, \qquad \vR^2 + \xL \uu + \xD \uu.
\end{split}\]
The negative cone is more poorly behaved. However, it contains a class $\Theta$ in degree $-3+\orho$ that is infinitely divisible by each of the {multiplicative} generators of the positive cone.

In \cref{fig:HFmult}, vertical (purple) 
lines indicate multiplications by $a=a_La_Da_R$. 
We use rainbows to indicate multiplication by $\xL$, $\xD$, and $\xR$, {though we omit the subscripts from the generator names} (as indicated in the Key), in order to avoid clutter in the figures. Thus the rainbow connecting the pentagon in (1,-1) to the pentagon in (2,-2) and the vertical line from (2,-1) to (2,-2) indicate that basis elements in (2,-2) are $\xL^2$, $\xD^2$,  $\xR^2$, $a\vL$, and $a\vR$. On the other hand, the rainbow from (2,-1) to (3,-2) indicates that a basis in (3,-2) is given by $\xL\vR$, $\vL\xR$,  $\xD\vL$, and $a\uu$. Note that $\xD\vR$ is equal to $\xD\vL + a\uu$, and that $\xL\vL$ and $\xR\vR$ are both equal to $a\uu$, though this is not indicated in the figure. 

Thus, our use of multiplication lines is to indicate choices of basis elements, rather than to display all possible multiplications. 
For example, the element $a$ in (0,-1) also supports a rainbow, {though} we have not included it in the figure.  

As indicated in \cref{fig:HNmult}, the multiplicative structure of $\upi^\K_{\blacklozenge} \KHN$ is largely the same as that of $\upi^\K_{\blacklozenge} \KH \ulF$. Important differences include:
\begin{enumerate}
\item the absence of $\vL$ and $\vR$ and their corresponding rainbow
\item  the absence of $\xL$, $\xD$, and $\xR$, though their restrictions appear
\item the elements $4$, $\genUL$, and $\genUR$ in $\pi_0^\K$ are infinitely $a$-divisible
\item the rainbows indicate multiplication by the generators of $\pi_{2-2\orho} \KHN$. These generators correspond to the elements $\xL^2$, $\xD^2$, and $\xR^2$ in $\pi_{2-2\orho} \KH \ulF$, up to $a$-multiples. 
\end{enumerate}
The multiplicative structure of $\upi^\K_{\blacklozenge} \KHND$ indicated in \cref{fig:HNDmult} is intermediate between that of $\upi^\K_\blacklozenge \KHN$ and $\upi^\K_\blacklozenge \KH$. For instance, the element $\xL+\xR$ is present in $\pi_{1-\orho} \KHND$ but not in $\pi_{1-\orho} \KHN$.
Here, the yellow arcs in the diagonal $x+y=1$ denote multiplication by the corresponding generator in $\pi_{2-2\orho} \KHND$; as discussed above, this is $\xD^2$, modulo $a$. However, we again warn the reader that many multiplications are not indicated in \cref{fig:HNDmult}. For example, in $\pi^\K_\blacklozenge \KH \ulF$, we have the relation $\xL^2 \cdot \xD\vL = a^2 \vR \uu$, which gives rise to an analogous formula in $\pi^\K_\blacklozenge \KHND$.

\begin{remark}
We have here discussed the multiplicative structure on $\upi_{\blacklozenge} \KHN$ and $\upi_{\blacklozenge} \KHND$. In other words, we have described the graded Green functors. These have more structure: they are graded Tambara functors. However, here there is not much additional data carried in the norms. In the case of $\upi_{\blacklozenge} \KHN$, we have a norm
\[
	\nm_e^\K \colon \pi_n H\F_2 \to \pi_{n+n\orho} \KHN.
\]
This can only be nonzero when $n$ is equal to zero, in which case this norm has been described in \cref{sec:NeKF}. Similarly, for the intermediate norm 
\[
	\nm_L^\K\colon \pi_{n+k\sigma} \CH \ulF \to \pi_{n+n\sigma_L + k\sigma_D + k\sigma_R} \KHN
\]
to land in the $\blacklozenge$-grading, one must have $n=k$. Then the source group is only nonzero when $n$ is equal to zero,  in which case this norm has been described in \cref{sec:NeKF}.
\end{remark}


\section{Tables and Charts}
\label{sec:charts}

Here, we display charts of homotopy Mackey functors for $C_2$-equivariant and $\K$-equivariant Eilenberg--Mac Lane spectra, including our main computations from \cref{sec:HNeF,sec:HNDF}.

We first display $C_2$-equivariant charts.
\cref{tab-C2Mackey} is useful for reading \cref{fig:piC2F2}, \cref{fig:piC2NF2}, and \cref{fig:piC2NHF2}.
\cref{fig:piC2NHF2} was obtained from the work of \cite{MSZ}. In more detail, the homotopy Mackey functors $\upi_n N_e^{C_2} H \bbF_2$ are described in \cite{MSZ}*{Theorem~4.4} for $n\leq 8$.
The data presented in \cref{fig:piC2NHF2} was then deduced from the computation of \cite{MSZ} 
by the facts that $N_e^{C_2} H \bbF_2$ is connective and that its geometric fixed points are  $\Phi^{C_2} N_e^{C_2} H \bbF_2 \simeq H\bbF_2$. The main mechanism used in this process is the long exact sequence
\[
\cdots \to \, \uparrow_e^{C_2}\! \pi_{\dim V} \left( H\F_2 \wedge H\F_2 \right) \to \upi_V N_e^{C_2} H \F_2 \xrightarrow{a} \upi_{V-\sigma} N_e^{C_2} H \F_2 \to \cdots
\]
The shaded region of \cref{fig:piC2NHF2} has not been computed.

\cref{tab-K4Mackey} is useful for reading the $\K$-equivariant charts \cref{fig:HF,fig:HFmult,fig:HN,fig:HNmult,fig:HND,fig:HNDmult}.
As discussed in \cref{sec:HNeF,sec:HNDF}, the shaded regions in \cref{fig:HN,fig:HND} indicate where those charts agree with the previously known \cref{fig:HF}.
The charts \cref{fig:HFmult,fig:HNmult,fig:HNDmult} indicate multiplicative structure. This is described in \cref{sec:mult}.

\clearpage


\begin{table}[h] 
\caption[Some  {$C_{2}$}-Mackey functors]{Some $C_{2}$-Mackey functors}
\label{tab-C2Mackey}
\begin{center}
{\renewcommand{\arraystretch}{1.2}
\begin{tabular}{|c|c|c|c|c|c|}
         \hline
         $\bullet=\ulg=\phi_{C_2}^* \F_2 $
         &  $\overline{\bullet}=\ulf$ 
         & $\fillsquare=\ulF$
         & $\fillsquaredual=\ulF^*$
         & $\raisebox{-2pt}{\begin{tikzpicture}\node[draw,fill=white,circle,inner sep=1pt]  at (0,0) {\tiny $\CN$};\end{tikzpicture}} = 
N_e^{C_2} \F_2$
         & $\hat\bullet = \uparrow_e^{C_2} \!\F_2$
         \\ \hline
          \begin{tikzcd}
            \F_2 \\ 0 
          \end{tikzcd}
         &  
         \begin{tikzcd}
            0 \\ \F_2 
          \end{tikzcd}
         &    
         \begin{tikzcd}
            \F_2 \ar[d,"1" swap] \\ \F_2
          \end{tikzcd}
         & 
          \begin{tikzcd}
            \F_2  \\ \F_2 \ar[u,"1" swap]  
          \end{tikzcd}
          &
          \begin{tikzcd}
          \Z/4 \ar[d,bend right, "1" swap] \\ \Z/2 \ar[u,bend right, "2" swap]
          \end{tikzcd}
          &
          \begin{tikzcd}
          \F_2 \ar[d,bend right, "\Delta" swap] \\ \F_2\{C_2/e\}
          \ar[u,bend right, "\nabla" swap]
          \end{tikzcd}
\\
\hline
\end{tabular} }
\end{center}
\end{table}


%
%

\begin{minipage}{\linewidth}
      \centering
      \begin{minipage}{0.475\linewidth}
          \begin{figure}[H]
			\caption{$\upi^{C_2}_{x+y\orho} \CH \ulF$}
			\label{fig:piC2F2}
              \includegraphics[width=\linewidth]{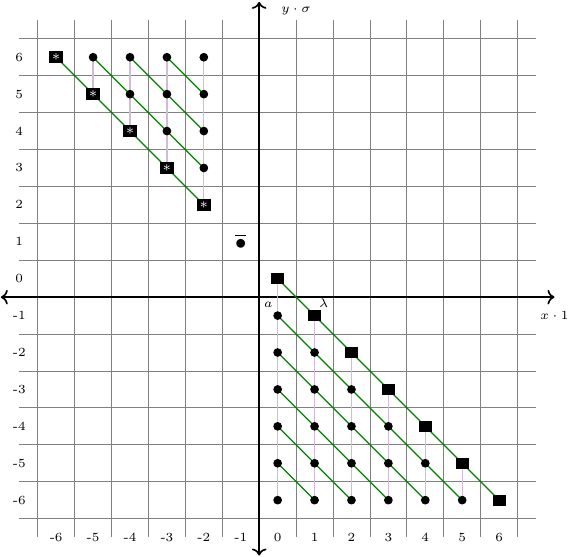}
          \end{figure}
      \end{minipage}
      \hspace{0.025\linewidth}
      \begin{minipage}{0.475\linewidth}
          \begin{figure}[H]
			\caption{$\upi^{C_2}_{x+y\orho} \CH N_e^{C_2} \F_2$}
			\label{fig:piC2NF2}
              \includegraphics[width=\linewidth]{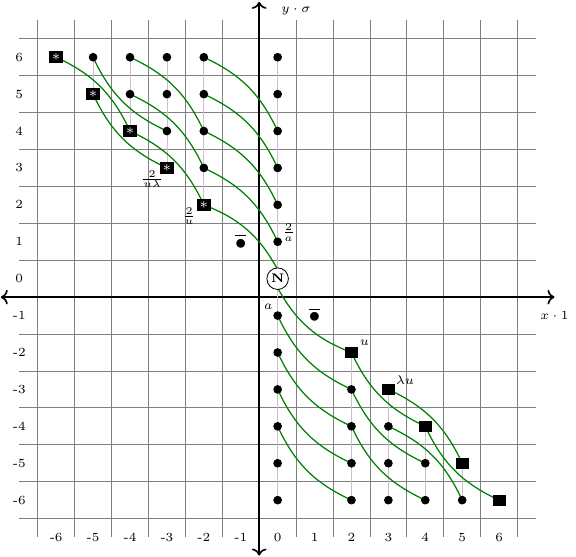}
          \end{figure}
      \end{minipage}

      \begin{minipage}{0.475\linewidth}
\begin{figure}[H]
\caption{$\upi^{C_2}_{x+y\orho} N_e^{C_2} H\F_2$}
\label{fig:piC2NHF2}
\includegraphics[width=\linewidth]{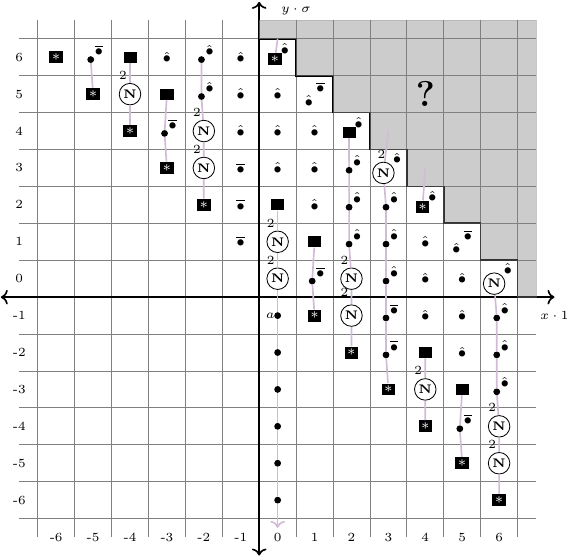}
\end{figure}
      \end{minipage}
  \end{minipage}


\begin{table}[h]
\caption{Some $K_{4}$-Mackey functors}
\label{tab-K4Mackey}
\begin{adjustbox}{width=0.9\columnwidth,center}
\begin{tabular}{|c|c|c|}
         \hline
        $\blacksquare=\ulF$ 
        &
         $\abox=\ulF^*$
         &
         $\circ=\ul B(2,0) $
         \\
         \hline
         \begin{tikzcd}[ swap]
         & \F_2 \ar[dl,"1"] \ar[d,"1"] \ar[dr,"1" swap]  & \\
         \F_2 \ar[dr,"1" pos=0.3] & \F_2 \ar[d,"1"]  & \F_2 \ar[dl,"1" swap, pos=0.4] \\
          & \F_2  
         \end{tikzcd}
         &
         \begin{tikzcd}[swap]
         & \F_2  & \\
         \F_2  \ar[ur,"1" swap]  & \F_2 \ar[u,"1"]  & \F_2 \ar[ul,"1"]  \\
          & \F_2  \ar[ul,"1" swap] \ar[u,"1"] \ar[ur,"1"] 
          \end{tikzcd}
         &
         \begin{tikzcd}[bend right=8, swap]
         & \Z/4 \ar[dl,"1"] \ar[d,"1"] \ar[dr,"1"] & \\
         \Z/2 \ar[ur,"2"]  & \Z/2 \ar[u,"2"]  & \Z/2 \ar[ul,"2"]  \\
          & 0 
         \end{tikzcd}
         \\ 
         \hline 
         $\begin{tikzpicture}
      \node[regular polygon, fill=black, draw, regular polygon sides=5, 
 minimum width=0pt, inner sep = 0.5ex,] at (0,0) {};
    \end{tikzpicture} =
\phi^*_{LDR} (\ulF)$
         &
         $\raisebox{-3pt}{\apent} = \phi^*_{LDR}(\ulF^*)$
         &
         $\phiLDRf=\phi^*_{LDR}(\ul f)$
         \\ \hline
         \begin{tikzcd}[ swap]
         & \F^3_2 \ar[dl,"p_1"] \ar[d,"p_2"] \ar[dr,"p_3" swap]  & \\
         \F_2  & \F_2  & \F_2 \\
          & 0  
         \end{tikzcd}
         &
         \begin{tikzcd}[]
         & \F^3_2    & \\
         \F_2 \ar[ur,"\iota_1"]  & \F_2 \ar[u,"\iota_2"'] & \F_2 \ar[ul,"\iota_3" swap] \\
          & 0  
         \end{tikzcd}
         &
         \begin{tikzcd}[ swap]
         & 0 & \\
         \F_2  & \F_2  & \F_2 \\
          & 0  
         \end{tikzcd}
         \\ 
         \hline 
         $\begin{tikzpicture}
      \node[trapezium, fill=black, draw, 
 minimum width=0pt, inner sep = 0.5ex,] at (0,0) {};
    \end{tikzpicture} = \ul{mg}$
         &
         $\raisebox{-3pt}{\begin{tikzpicture}
      \node[trapezium, fill={gray!50}, draw, 
 minimum width=0pt, inner sep = 0.5ex,] at (0,0) {};
 \node at (0,0) {$\ast$};
    \end{tikzpicture}} = \ul{mg}^*$
         &
         $\bullet=\ulg$
         \\ \hline
         \begin{tikzcd}[ swap]
         & \F^2_2 \ar[dl,"p_1"] \ar[d,"\nabla"] \ar[dr,"p_2" swap]  & \\
         \F_2  & \F_2 & \F_2 \\
          & 0  
         \end{tikzcd}
         &
         \begin{tikzcd}[]
         & \F^2_2     & \\
         \F_2\ar[ur, "\iota_1"]  & \F_2 \ar[u,"\Delta"'] & \F_2 \ar[ul,"\iota_2" swap] \\
          & 0  
         \end{tikzcd}
         &
         \begin{tikzcd}[swap]
         & \F_2 & \\
         0  & 0  & 0 \\
          & 0  
         \end{tikzcd}
         \\ 
         \hline 
         $\nH = \ul{n}_D$
         &
         $\nHdual = \ul{n}_D^*$
         &
         $\ul{v}_D^*$
         \\ \hline
         \begin{tikzcd}[ swap]
         & \F_2 \ar[dl,"1"]  \ar[dr,"1" swap]  & \\
         \F_2  & 0  & \F_2 \\
          & 0  
         \end{tikzcd}
         &
         \begin{tikzcd}[]
         & \F_2    & \\
         \F_2 \ar[ur,"1"]  & 0 & \F_2 \ar[ul,"1" swap] \\
          & 0  
         \end{tikzcd}
         &
         \begin{tikzcd}[]
         & 0    & \\
         \F_2   & 0 & \F_2  \\
          & \F_2 \ar[ul,"1" ] \ar[ur,"1" swap]
         \end{tikzcd}
\\
\hline
\end{tabular} 
\end{adjustbox}
\end{table}

\clearpage


\begin{figure}
\caption{$\upi^\K_{x+y\orho} \KH \ulF$}
\label{fig:HF}
\includegraphics[width=\textwidth]{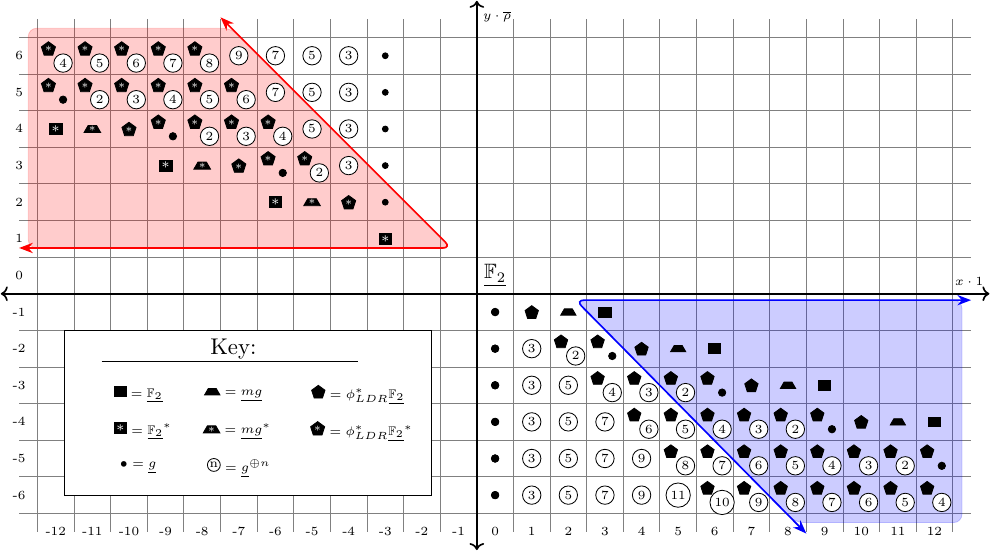}
\end{figure}

\begin{figure}
\caption{$\upi^\K_{x+y\orho} \KH \ulF$, with multiplicative structure emphasized}
\label{fig:HFmult}
\includegraphics[width=\textwidth]{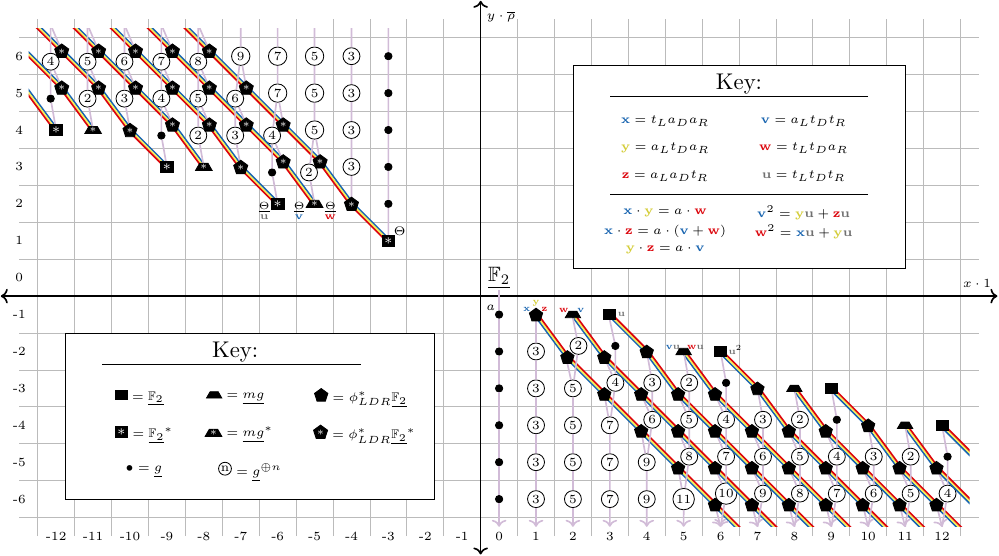}
\end{figure}

\begin{figure}
\caption{$\upi^\K_{x+y\orho} \KH N_e^\K \bbF_2$}
\label{fig:HN}
\includegraphics[width=\textwidth]{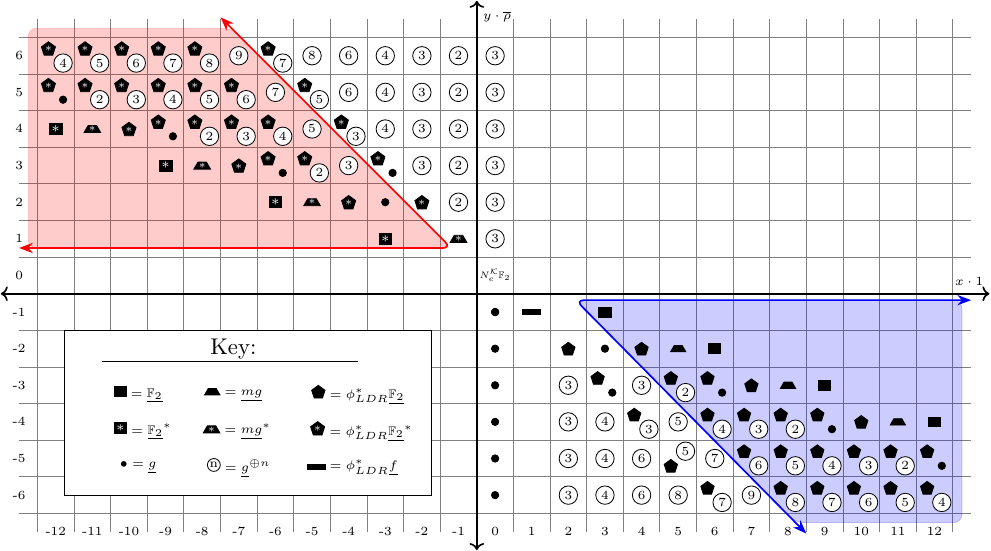}
\end{figure}

\begin{figure}
\caption{$\upi^\K_{x+y\orho} \KH N_e^\K \bbF_2$, with multiplicative structure emphasized}
\label{fig:HNmult}
\includegraphics[width=\textwidth]{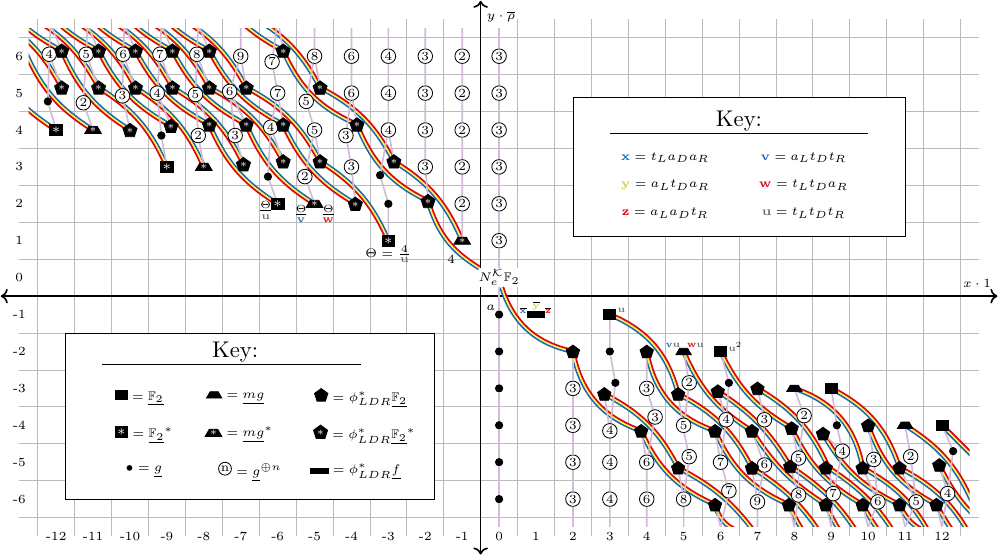}
\end{figure}

\begin{figure}
\caption{$\upi^\K_{x+y\orho} \KH N_D^\K \ulF$}
\label{fig:HND}
\includegraphics[width=\textwidth]{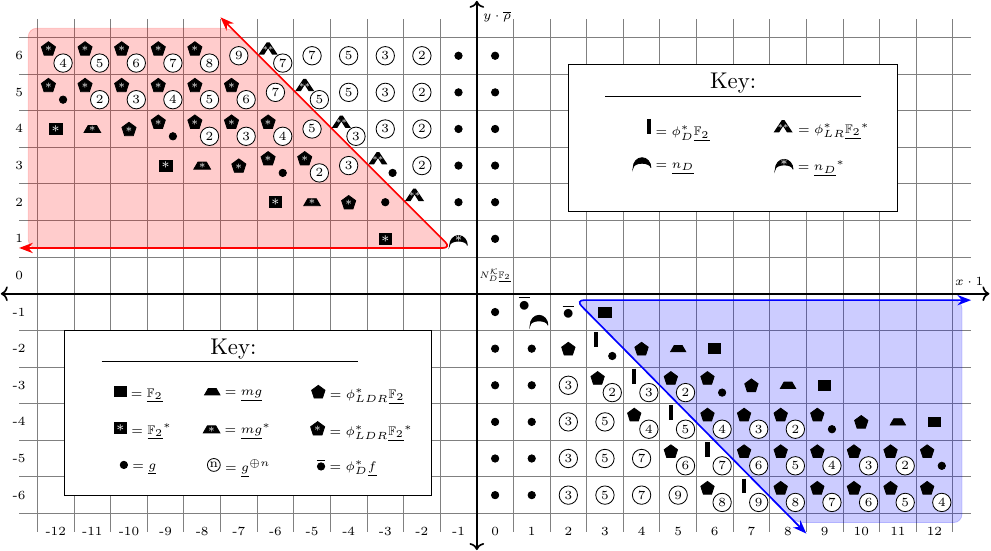}
\end{figure}

\begin{figure}
\caption{$\upi^\K_{x+y\orho} \KH N_D^\K \bbF_2$, with multiplicative structure emphasized}
\label{fig:HNDmult}
\includegraphics[width=\textwidth]{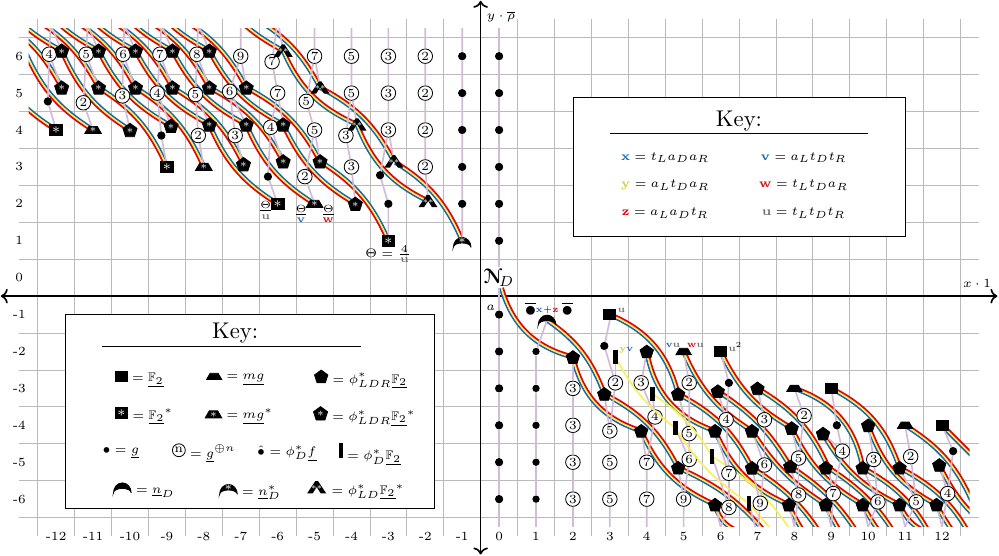}
\end{figure}

\clearpage

\bibliographystyle{amsalpha}

\begin{bibdiv}
\begin{biblist}

\bib{BGHL}{article}{
   author={Blumberg, Andrew J.},
   author={Gerhardt, Teena},
   author={Hill, Michael A.},
   author={Lawson, Tyler},
   title={The Witt vectors for Green functors},
   journal={J. Algebra},
   volume={537},
   date={2019},
   pages={197--244},
   issn={0021-8693},
   review={\MR{3990042}},
   doi={10.1016/j.jalgebra.2019.07.014},
}

\bib{BH2018}{article}{
	AUTHOR = {Blumberg, Andrew J.},
	AUTHOR = {Hill, Michael A.},
     TITLE = {Incomplete {T}ambara functors},
   JOURNAL = {Algebr. Geom. Topol.},
  FJOURNAL = {Algebraic \& Geometric Topology},
    VOLUME = {18},
      YEAR = {2018},
    NUMBER = {2},
     PAGES = {723--766},
      ISSN = {1472-2747,1472-2739},
}

\bib{Dugger}{article}{
   author={Dugger, Daniel},
   title={An Atiyah-Hirzebruch spectral sequence for $KR$-theory},
   journal={$K$-Theory},
   volume={35},
   date={2005},
   number={3-4},
   pages={213--256 (2006)},
   issn={0920-3036},
   review={\MR{2240234}},
   doi={10.1007/s10977-005-1552-9},
}

\bib{EB}{thesis}{
	author={Ellis-Bloor, B.},
	title={The Klein four homotopy Mackey functor structure of {$H\F_2$}},
	note={ANU Honours Thesis},
	date={2020},
	eprint={https://openresearch-repository.anu.edu.au/bitstream/1885/258402/1/Honours_thesis.pdf},
}

\bib{GY}{article}{
   author={Guillou, B.},
   author={Yarnall, C.},
   title={The Klein four slices of ${\Sigma}^nH\underline{\Bbb F}_2$},
   journal={Math. Z.},
   volume={295},
   date={2020},
   number={3-4},
   pages={1405--1441},
   issn={0025-5874},
   review={\MR{4125695}},
   doi={10.1007/s00209-019-02433-3},
}

\bib{HS}{article}{
	author={Hausmann, M.},
	author={Schwede, S.},
	title={Representation-graded Bredon homology of elementary abelian 2-groups},
	JOURNAL = {Algebr. Geom. Topol.},
  	FJOURNAL = {Algebraic \& Geometric Topology},
	volume={25},
	year={2025},
	number={7},
	pages={4321--4340},
	DOI = {10.2140/agt.2025.25.4321},
}

\bib{Hill2017}{article}{
    AUTHOR = {Hill, Michael A.},
     TITLE = {On the {A}ndr\'e-{Q}uillen homology of {T}ambara functors},
   JOURNAL = {J. Algebra},
  FJOURNAL = {Journal of Algebra},
    VOLUME = {489},
      YEAR = {2017},
     PAGES = {115--137},
      ISSN = {0021-8693,1090-266X},
       DOI = {10.1016/j.jalgebra.2017.06.029},
       URL = {https://doi.org/10.1016/j.jalgebra.2017.06.029},
}

\bib{Slice}{article}{
   author={Hill, Michael A.},
   title={The equivariant slice filtration: a primer},
   journal={Homology Homotopy Appl.},
   volume={14},
   date={2012},
   number={2},
   pages={143--166},
   issn={1532-0073},
   review={\MR{3007090}},
   doi={10.4310/HHA.2012.v14.n2.a9},
}

\bib{Kervaire}{article}{
   author={Hill, M. A.},
   author={Hopkins, M. J.},
   author={Ravenel, D. C.},
   title={On the nonexistence of elements of Kervaire invariant one},
   journal={Ann. of Math. (2)},
   volume={184},
   date={2016},
   number={1},
   pages={1--262},
   issn={0003-486X},
   review={\MR{3505179}},
   doi={10.4007/annals.2016.184.1.1},
}

\bib{HM2019}{article}{
    AUTHOR = {Hill, Michael A.},
    AUTHOR = {Mazur, Kristen},
     TITLE = {An equivariant tensor product on {M}ackey functors},
   JOURNAL = {J. Pure Appl. Algebra},
  FJOURNAL = {Journal of Pure and Applied Algebra},
    VOLUME = {223},
      YEAR = {2019},
    NUMBER = {12},
     PAGES = {5310--5345},
      ISSN = {0022-4049,1873-1376},
       DOI = {10.1016/j.jpaa.2019.04.001},
       URL = {https://doi.org/10.1016/j.jpaa.2019.04.001},
}

\bib{HMQ}{article}{
  author   = {Hill, Michael A.},
  author   = {Mehrle, David},
  author   = {Quigley, James D.},
  title    = {Free incomplete {T}ambara functors are almost never flat},
  journal  = {Int. Math. Res. Not. IMRN},
  fjournal = {International Mathematics Research Notices. IMRN},
  year     = {2023},
  number   = {5},
  pages    = {4225--4291},
  issn     = {1073-7928,1687-0247},
  doi      = {10.1093/imrn/rnab361},
  url      = {https://doi.org/10.1093/imrn/rnab361},
}

\bib{HSWX}{article}{
   author={Hill, Michael A.},
   author={Shi, XiaoLin Danny},
   author={Wang, Guozhen},
   author={Xu, Zhouli},
   title={The slice spectral sequence of a $C_4$-equivariant height-4
   Lubin-Tate theory},
   journal={Mem. Amer. Math. Soc.},
   volume={288},
   date={2023},
   number={1429},
   pages={v+119},
   issn={0065-9266},
   isbn={978-1-4704-7468-3; 978-1-4704-7571-0},
   review={\MR{4627086}},
   doi={10.1090/memo/1429},
}

\bib{HoKr}{article}{
   author={Holler, John},
   author={Kriz, Igor},
   label={HoKr},
   title={On ${\rm RO}(G)$-graded equivariant ``ordinary'' cohomology where
   $G$ is a power of ${\Bbb Z}/2$},
   journal={Algebr. Geom. Topol.},
   volume={17},
   date={2017},
   number={2},
   pages={741--763},
   issn={1472-2747},
   review={\MR{3623670}},
   doi={10.2140/agt.2017.17.741},
}

\bib{Hoyer}{thesis}{
	author={Hoyer, Rolf},
	label={Ho},
	title={Two topics in stable homotopy theory},
	note={Thesis (Ph.D.)--The University of Chicago},
	PUBLISHER = {ProQuest LLC, Ann Arbor, MI},
    YEAR = {2014},
    PAGES = {93},
    ISBN = {978-1321-03338-0},
}

\bib{JK}{article}{
	author={Keyes, J.},
	title={The $RO(K)$-graded coefficients of $H\underline{A}$},
	date={2025},
	eprint={https://arxiv.org/abs/2503.03173},
}

\bib{MSZ}{article}{
   author={Meier, Lennart},
   author={Shi, XiaoLin Danny},
   author={Zeng, Mingcong},
   title={The localized slice spectral sequence, norms of Real bordism, and
   the Segal conjecture},
   journal={Adv. Math.},
   volume={412},
   date={2023},
   pages={Paper No. 108804, 74},
   issn={0001-8708},
   review={\MR{4517348}},
   doi={10.1016/j.aim.2022.108804},
}

\bib{ideals}{article}{
    AUTHOR = {Nakaoka, Hiroyuki},
     TITLE = {Ideals of {T}ambara functors},
   JOURNAL = {Adv. Math.},
  FJOURNAL = {Advances in Mathematics},
    VOLUME = {230},
      YEAR = {2012},
    NUMBER = {4-6},
     PAGES = {2295--2331},
      ISSN = {0001-8708,1090-2082},
       DOI = {10.1016/j.aim.2012.04.021},
       URL = {https://doi.org/10.1016/j.aim.2012.04.021},
}

\bib{Sikora}{article}{
   author={Sikora, Igor},
   label={Si},
   title={On the $RO(Q)$-graded coefficients of Eilenberg--Mac~Lane spectra},
   journal={J. Homotopy Relat. Struct.},
   volume={17},
   date={2022},
   number={4},
   pages={525--568},
   issn={2193-8407},
   review={\MR{4514125}},
   doi={10.1007/s40062-022-00314-x},
}

\bib{Slone}{article}{
   author={Slone, Carissa},
   label={Sl},
   title={Klein four 2-slices and the slices of $\Sigma^{\pm
   n}H\underline{\Bbb Z}$},
   journal={Math. Z.},
   volume={301},
   date={2022},
   number={4},
   pages={3895--3938},
   issn={0025-5874},
   review={\MR{4449734}},
   doi={10.1007/s00209-022-03022-7},
}

\bib{Tam1993}{article}{
  title    = {On Multiplicative Transfer},
  author   = {Tambara, D.},
  year     = {1993},
  month    = {jan},
  volume   = {21},
  pages    = {1393--1420},
  issn     = {0092-7872, 1532-4125},
  doi      = {10.1080/00927879308824627},
  journal  = {Communications in Algebra},
  language = {en},
  number   = {4},
}

\bib{U}{article}{
	author = {Ullman, John},
	title = {Symmetric Powers and Norms of Mackey Functors},
	eprint = {https://arxiv.org/pdf/1304.5648},
	year = {2013},
}

\end{biblist}
\end{bibdiv}

\end{document}